\newcommand{\gettikzxy}[3]{%
  \tikz@s\can@one@point\pgfutil@firstofone#1\relax
  \edef#2{\the\pgf@x}%
  \edef#3{\the\pgf@y}%
}
\colorlet{mc0}{black}
\colorlet{mc1}{blue}
\colorlet{mc2}{red}
\colorlet{mc3}{black!50!green}
\newcommand{\nc}{\newcommand}
\nc{\nt}{\newtheorem}
\nc{\dmo}{\DeclareMathOperator}
\theoremstyle{definition}
\numberwithin{theorem}{section}
\dmo{\Mod}{Mod}
\dmo{\Teich}{Teich}
\dmo{\Aut}{Aut}
\dmo{\Out}{Out}
\dmo{\Homeo}{Homeo}
\dmo{\Stab}{Stab}
\dmo{\Star}{Star}
\dmo{\Link}{Link}
\dmo{\PMF}{PMF}
\dmo{\EC}{EC}
\nc{\A}{\mathcal{A}}
\nc{\calB}{\mathcal{B}}
\nc{\C}{\mathcal{C}}
\nc{\F}{\mathcal{F}}
\nc{\calG}{\mathcal{G}}
\nc{\calH}{\mathcal{H}}
\nc{\calL}{\mathcal{L}}
\nc{\calN}{\mathcal{N}}
\nc{\calO}{\mathcal{O}}
\nc{\X}{\mathcal{X}}
\nc{\calY}{\mathcal{Y}}
\nc{\calZ}{\mathcal{Z}}
\nc{\Q}{\mathcal{Q}}
\nc{\T}{\mathcal{T}}
\nc{\W}{\mathrm{Piv}}
\renewcommand{\P}{\mathcal{P}}
\nc{\I}{\mathcal{I}}
\nc{\PB}{PB}
\nc{\D}{D}
\nc{\Y}{\mathbb{Y}}
\nc{\N}{\mathbb{N}}
\nc{\Z}{\mathbb{Z}}
\nc{\R}{\mathbb{R}}
\nc{\Ke}{C_e}
\nc{\Kp}{C_p}
\nc{\Kg}{C_g}
\nc{\grp}[1]{\langle{#1}\rangle}
\nc{\normalclosure}[1]{\left\langle\hspace{-.7mm}\left\langle{#1}\right\rangle\hspace{-.7mm}\right\rangle}%
\nc{\abs}[1]{\left\lvert {#1} \right\rvert}%
\nc{\cut}[1]{\left\{\!\left\{ {#1} \right\}\!\right\}}%
\nc{\from}{\colon\thinspace}
\nc{\dd}{\underline{d}}
\nc{\BigFreeProd}[1]{\raisebox{-1.5pt}{ \ensuremath{\underset{\mbox{\scriptsize{$#1$}}}{\mbox{\Huge{$\ast$}}}}}\,}
\nc{\FreeProd}[1]{\raisebox{-1.5pt}{ \ensuremath{\underset{\mbox{\scriptsize{$#1$}}}{\mbox{\huge{$\ast$}}}}}\,}
\nc{\FreeProdNoArg}{\raisebox{-1.5pt}{ {\mbox{\huge{$\ast$}}}}\,}
\nc{\FreeProdInf}{\raisebox{-1.5pt}{ \ensuremath{\underset{\mbox{\scriptsize{$\infty$}}}{\mbox{\huge{$\ast$}}}}}\,}
\nc{\margin}[1]{\marginpar{\scriptsize #1}}
\nc{\p}[1]{\medskip\paragraph{{\it #1}}}
\nc{\ip}[1]{\medskip\paragraph{{\em #1}}}
\dmo{\diam}{diam}
\dmo{\interior}{int}
\dmo{\supp}{supp}
\dmo{\PSL}{PSL}
\dmo{\Aff}{Aff}
\dmo{\Isom}{Isom}
\title[RAAGs as normal subgroups of mapping class groups]{Right-angled Artin groups as normal subgroups of Mapping Class Groups}
\author{Matt Clay}
\author{Johanna Mangahas}
\author{Dan Margalit}
\date{} 
\begin{document}

\begin{abstract}
We construct the first examples of normal subgroups of mapping class groups that are isomorphic to non-free right-angled Artin groups.  Our construction also gives normal, non-free right-angled Artin subgroups of other groups, such as braid groups and pure braid groups, as well as many subgroups of the mapping class group, such as the Torelli subgroup.  Our work recovers and generalizes the seminal result of Dahmani--Guirardel--Osin, which gives free, purely pseudo-Anosov normal subgroups of mapping class groups.  We give two applications of our methods: (1)~we produce an explicit proper normal subgroup of the mapping class group that is not contained in any level $m$ congruence subgroup, and (2)~we produce an explicit example of a pseudo-Anosov mapping class with the property that all of its even powers have free normal closure and its odd powers normally generate the entire mapping class group.  The technical theorem at the heart of our work is a new version of the windmill apparatus of Dahmani--Guirardel--Osin, which is tailored to the setting of group actions on the projection complexes of Bestvina--Bromberg--Fujiwara.
\end{abstract}

\maketitle


\vspace*{-1.5em}

\section{Introduction}


This paper is an investigation into the structure of normal subgroups of the mapping class group.  While there is no hope of a complete classification of such subgroups into isomorphism types, we may hope for broader descriptions of the various possible behaviors.   One of the main goals of this paper is to give new examples of right-angled Artin groups that embed as normal subgroups of mapping class groups.    

\subsection{Overview} We begin with an overview of the content---and context---of this paper.  The first examples of normal, right-angled Artin subgroups of mapping class groups of arbitrary surfaces were given in the celebrated work of Dahmani--Guirardel--Osin \cite{ar:DGO17}.  They proved that if $f$ is pseudo-Anosov, then the normal closure of some high power of $f$ is a free group of infinite rank.  

The Dahmani--Guirardel--Osin result cannot be generalized to arbitrary mapping classes of infinite order.  Indeed, if $f$ is a mapping class with sufficiently small support (say, a power of a Dehn twist about a nonseparating curve), then $f$ has a conjugate that commutes with it.  Thus, the normal closure of $f$ is not free.  

In the absence of freeness, we may hope that the normal closure of a power of $f$ is isomorphic to a right-angled Artin group, that is, a group where all of the defining relations are commutations among generators.  However, Brendle and the third author \cite{bm} showed that if the support of a mapping class is sufficiently small (in a precise sense that they define) then its normal closure is not isomorphic---or even abstractly commensurable---to a right-angled Artin group; see also \cite{clayleiningermargalit}. 

In summary: if the support of a mapping class is the entire surface (the pseudo-Anosov case) then a large power has normal closure a free group, and if the support is sufficiently small, then the normal closure cannot be any right-angled Artin group.  The main result of this paper, Theorem~\ref{thm:main} below, can be summarized as:
\begin{quote}
\emph{If the support of a mapping class is sufficiently large, then it has a large power whose normal closure is a right-angled Artin group.}
\end{quote}
As we will see, Theorem~\ref{thm:main} applies not only to single elements but also to finite collections.  Theorem~\ref{thm:main} further applies to normal closures in arbitrary subgroups of the mapping class group as well.  So, for example, we may apply Theorem~\ref{thm:main} in order to construct new normal, right-angled Artin subgroups of the pure braid group and the Torelli group.

Theorem~\ref{thm:main} gives the precise isomorphism types of the right-angled Artin groups that arise from our construction.  Specifically, each is a free product of groups of the following form:
\[
F_\infty, \quad \FreeProdInf (F_\infty \times F_\infty),  \quad \FreeProdInf (F_\infty \times \Z),  \quad \text{and}  \quad \FreeProdInf (F_\infty \times F_\infty \times \Z).
\]
The 14 non-free right-angled Artin groups arising as a free product in this way are the first known examples of non-free, normal, right-angled Artin subgroups of the mapping class group.  We show in Section~\ref{sec:examples} that all 15 right-angled Artin groups described above  (including the free one) also appear as normal subgroups of the Torelli group and the pure braid group.  

As applications of Theorem~\ref{thm:main} we exhibit two new phenomena.
\begin{enumerate}
\item There is a normal subgroup of the mapping class group that is not contained in any level $m$ congruence subgroup.
\item There is a pseudo-Anosov mapping class with the property that all of its odd powers have normal closure equal to the mapping class group and all of its nonzero even powers have normal closure a free group of infinite rank.
\end{enumerate}
See Theorems~\ref{thm:cong} and~\ref{thm:evenodd} below for the precise statements.  The former answers a question raised in earlier work by Lanier and the third author.

In order to prove Theorem~\ref{thm:main} we appeal to---and develop---the theory of projection complexes, introduced by Bestvina--Bromberg--Fujiwara \cite{ar:BBF15}.  We give a general result (Theorem~\ref{thm:spin}), which says that if a group has a ``spinning'' action on a projection complex, then the group is isomorphic to a free product of certain vertex stabilizers.  The projection complexes that we consider in the proof of Theorem~\ref{thm:main} are novel in that the vertices correspond to disconnected subsurfaces of the given surface.  Theorem~\ref{thm:spin} applies much more generally, though, and has applications, for example, to the theory of $\Out(F_n)$ (see Section~\ref{sec:wasf}).  

\ip{Outline of the introduction} The rest of the introduction is structured as follows.  We begin by giving the statement of our main result in Section~\ref{sec:state}.  We give some first consequences in Section~\ref{sec:1st}, where we explicitly construct right-angled Artin subgroups of the mapping class group by directly applying Theorem~\ref{thm:main}.  In Section~\ref{sec:appnc} we discuss Theorem~\ref{thm:cong}, our construction of normal subgroups of the mapping class group that are not contained in congruence subgroups.  In Section~\ref{sec:which} we give a complete picture of which right-angled Artin groups arise from our construction (the 15 examples discussed above).  Then in Section~\ref{sec:whichpowers} we discuss some of the finer details about the statement of our main theorem, addressing the question: given a pseudo-Anosov mapping class, precisely which powers do, and do not, have free normal closure?  Here we state Theorem~\ref{thm:evenodd}.  In Section~\ref{sec:wasf} we discuss our technical results about group actions on projection complexes.  Finally, in Section~\ref{sec:outline} we give an outline of the remainder of the paper.


\subsection{Statement of the main result}\label{sec:state} The statement of our main result, Theorem~\ref{thm:main} below, requires a number of definitions and notations.

\ip{Mapping class groups and pseudo-Anosov mapping classes} Let $S_g$ denote the closed, connected, orientable surface of genus $g$, and let $S_{g,p}$ denote the surface obtained from $S_g$ by deleting $p$ points (so $S_{g,0} = S_g$).  Finally, let $S_{g,p}^b$ be the surface obtained from $S_{g,p}$ by deleting the interiors of $b$ disjoint disks (so $S_{g,p}^0=S_{g,p}$).  The mapping class group $\Mod(S_{g,p}^b)$ is defined as the group of connected components of $\textrm{Homeo}^+(S_{g,p}^b,\partial S_{g,p}^b)$, the group of 
orientation-preserving homeomorphisms of $S_{g,p}^b$ that fix the boundary pointwise.   In what follows, fix $S = S_{g,p}^b$.

The Nielsen--Thurston classification theorem states that each element of $\Mod(S)$ is either periodic, reducible, or pseudo-Anosov; see, e.g. \cite[Chapter 13]{primer}.  The group $\Mod(S)$ acts on the space of projective measured foliations $\PMF(S)$ and an element is pseudo-Anosov if and only if the cyclic subgroup it generates acts with source-sink dynamics; in this case the source is denoted $F_-$ and the sink $F_+$.  

\ip{Partial pseudo-Anosov mapping classes} 
By a subsurface of $S$ we will mean a closed submanifold $X$ with the property that each component of $\partial X$ is either a component of $\partial S$ or an essential, non-peripheral simple closed curve in $S$ (essential means not homotopic to a point or a puncture, and non-peripheral means not homotopic to a component of $\partial S$).  We further assume that no two connected components of $S$ are homotopic to each other (in other words, if $X$ has an annular component then no other component is a parallel annulus).  We also let $\bar X$ denote the surface obtained from $X$ by collapsing each component of the boundary to a marked point (we may alternatively regard the marked points as punctures).

For a subgroup $G$ of $\Mod(S)$, let $\Stab_G(X)$ be the stabilizer in $G$ of the homotopy class of $X$.  There is a well-defined map $\Stab_G(X) \to \Mod(\bar X)$.  We denote the image of $f \in \Stab_G(X)$ by $\bar f$.  

A partial pseudo-Anosov mapping class is an $f \in \Mod(S)$ that has a representative supported on a connected subsurface $X$ and whose image $\bar f$ in $\Mod(\bar X)$ is pseudo-Anosov.  

It follows from the Birman--Lubotzky--McCarthy theory of canonical reduction systems \cite{BLM} that the support $X$ of a partial pseudo-Anosov mapping class $f$ is a well-defined homotopy class of subsurfaces of $S$.

\ip{Independence} We say that pseudo-Anosov mapping classes $f_1,f_2 \in \Mod(S)$ are independent if their corresponding sets of fixed points in $\PMF(\bar S)$ are disjoint (equivalently, not equal).  McCarthy \cite{mccarthy} proved that $f_1$ and $f_2$ are independent in this sense if and only if no nontrivial power of $\bar f_1$ is equal to a power of $\bar f_2$ (if $\partial S = \emptyset$ then $\bar S=S$ and  $\bar f_i = f_i$).   

Suppose that $G$ is a subgroup of $\Mod(S)$.  We further say that $f_1,f_2 \in G$ are $G$--independent if every conjugate of $f_1$ by an element of $G$ is independent of $f_2$ (equivalently, the fixed sets in $\PMF(\bar S)$ lie in different $G$--orbits).  

The definitions of independence and $G$--independence carry over to the case where $f_1$ and $f_2$ are partial pseudo-Anosov elements with the same support $X$.  Specifically, they are independent if the corresponding maps $\bar f_1$ and $\bar f_2$ are independent, and they are $G$--independent if the conjugates by $\Stab_G(X)$ are independent.  

We may further extend the definition to the case where $f_1$ and $f_2$ are arbitrary partial pseudo-Anosov elements.  Specifically, we say that $f_1$ and $f_2$ are $G$--independent if either (1) their supports do not lie in the same $G$--orbit or (2) their supports do lie in the same $G$--orbit and conjugates of $f_1$ and $f_2$ with the same support are $G$--independent as in the previous paragraph.

Even further, let $f_1$ and $f_2$ be two mapping classes and assume that each $f_i$ is either a pseudo-Anosov mapping class, a partial pseudo-Anosov mapping class, or a power of a Dehn twist.  Then we say that $f_1$ and $f_2$ are $G$--independent if either (1) their supports lie in different $G$--orbits or (2) their supports lie in the same $G$--orbit, they are not both powers of Dehn twists, and they are $G$--independent as above.

Finally, if $\F \subseteq \Mod(S)$ is an arbitrary collection of pseudo-Anosov mapping classes, partial pseudo-Anosov elements, and powers of Dehn twists, then we say that $\F$ is $G$--independent if each pair of elements of $\F$ is $G$--independent as above.

\ip{Elementary closure and NEC mapping classes}  Suppose $\partial S = \emptyset$.  Again let $G$ be a subgroup of $\Mod(S)$, and let $f \in G$ be a pseudo-Anosov element.   The elementary closure in $G$ of $f$, written $\EC_G(f)$, is defined to be the stabilizer in $G$ of the associated pair of fixed points in $\PMF(S)$.  For $G = \Mod(S)$ we simply write $\EC(f)$.  We say that $f$ is an NEC element of $G$ if $\grp{f}$ is normal in $\EC_G(f)$. 

McCarthy \cite{mccarthy} proved that $\EC(f)$ is an extension of $\Z$ or $D_\infty$ by a finite subgroup of $\Mod(S)$.  Since the order of a finite subgroup of $\Mod(S)$ is bounded by a function of $S$, it follows that there is a $d=d(S)$ so that the $d$th power of any pseudo-Anosov mapping class is NEC in $\Mod(S)$.  A pseudo-Anosov element that is NEC in $\Mod(S)$ is also NEC in an arbitrary subgroup $G$, and so there is a $d=d(S)$ so that the $d$th power of any pseudo-Anosov element of any subgroup $G$ is an NEC element of $G$.

For the case where $\partial S \neq \emptyset$, we say that a pseudo-Anosov $f \in \Mod(S)$ is NEC if $\bar f \in \Mod(\bar S)$ is.

Finally, we may again carry over the definitions to the partial pseudo-Anosov case.  Let $f \in \Mod(S)$ be a partial pseudo-Anosov element with support $X$.  Consider the map $\Stab_G(X) \to \Mod(\bar X)$.  Denote the image of $\Stab_G(X)$ by $\bar G$ and the image of $f$ by $\bar f$.  We say that $f$ is an NEC element of $G$ if $\bar f$ is an NEC element of $\bar G$.  

A power of a Dehn twist is by definition an NEC element of $\Mod(S)$.

In this paper we give two further sufficient conditions for a pseudo-Anosov mapping class to be an NEC element of $\Mod(S)$.  See the discussion after Proposition~\ref{prop:nec} as well as Lemma~\ref{lem:thurston}.

\ip{Orbit-overlapping subsurfaces...} Let $X$ and $Y$ be two homotopy classes of connected subsurfaces of $S$.  We say that $X$ and $Y$ overlap if the boundary of $X$ has essential intersection with $Y$, and vice versa (meaning that the boundary of every representative of $X$ intersects every representative of $Y$, and vice versa).  

If the boundaries of (representatives of) $X$ and $Y$ have essential intersection then $X$ and $Y$ overlap.  However the converse is not true: consider for example the case where $X$ and $Y$ are the complements of disjoint, homotopically distinct nonseparating annuli in $S$.

We may extend the definition of overlapping to the case of subsurfaces that are not necessarily connected.  We say that two arbitrary subsurfaces $X$ and $Y$ overlap if each each component of $X$ overlaps with each component of $Y$.

Let $G$ be a subgroup of $\Mod(S)$.  We say that a homotopy class of subsurfaces $X$ is $G$--overlapping if for each $h \in G$ we either have that $hX$ is equal to $X$ or it overlaps with $X$.  We say that a family $\X$ of homotopy classes of subsurfaces of $S$ is $G$--overlapping if each $X \in \X$ is $G$--overlapping and for each distinct pair $X,Y \in \X$ any elements of the $G$--orbits of $X$ and $Y$ overlap.  In particular, we must have that $X$ and $Y$ lie in different $G$--orbits. 

One example of a $\Mod(S_g)$--overlapping subsurface of $S_g$ is the complement of a nonseparating annulus.  Another example of a $\Mod(S_g)$--overlapping subsurface of $S_g$ is an annular neighborhood of a separating curve of genus $g/2$, that is, a separating curve that divides $S_g$ into two surfaces of genus $g/2$ (we may also say that the curve itself is $\Mod(S_g)$--overlapping).  Of course if $G_1 \leqslant G_2$ then a $G_2$--overlapping family of subsurfaces is $G_1$--overlapping.  So the given examples are $G$--overlapping for each subgroup $G \leqslant \Mod(S_g)$.  

In general, the $G$--overlapping condition is quite restrictive.  For instance, Corollary~\ref{co:axbx} in Section~\ref{sec:examples} states that if $G=\Mod(S)$ and $X$ is a $G$--overlapping subsurface, then $X$ has at most one annular component and at most two non-annular components.  Proposition~\ref{prop:overlapping} gives further restrictions.

In their work on the large scale geometry of big mapping class groups, Mann--Rafi \cite{un:MR} define the notion of a nondisplaceable subsurface of a surface, which is an analogue of our $G$--overlapping condition in the case that $G=\Mod(S)$.  For a connected subsurface, their notion is the same as our notion of $\Mod(S)$--overlapping, but for disconnected subsurfaces it is less restrictive.

\ip{...and the associated mapping classes}  As above, let $G$ be a subgroup of $\Mod(S)$.  Let $\X$ be a $G$--overlapping family of homotopy classes of subsurfaces of $S$.  We say that a family of mapping classes $\F \subseteq G$ is carried by $\X$ if there is a function
\[
\sigma \from \F \to \X
\]
so that the following conditions hold: 
\begin{enumerate}
\item each $f \in \F$ is a pseudo-Anosov mapping class, partial pseudo-Anosov mapping class, or a power of a Dehn twist, and the support of $f$ is (the homotopy class of) a component of (a representative of) $\sigma(f)$, and
\item for any component $X$ of an element of $\X$, there is an $f \in \F$ and $g \in G$ so that the support of $gfg^{-1}$ is $X$.
\end{enumerate}
In the case where no two components of elements of $\X$ lie in the same $G$--orbit, we may replace condition (2) with the simpler condition that for any component $X$ of an element of $\X$, there is an $f \in \F$ whose support is $X$.  Condition (2) is required for the situation where an element of $\X$ has two components $X_1$ and $X_2$ in the same $G$--orbit; say $gX_1=X_2$.  In this case, if the family $\F$ had both an element $f_1$ with support $X_1$ and the element $f_2 = gf_1g^{-1}$ with support $X_2$ (as per the simpler condition), then $\F$ would not be $G$--independent.

\ip{Statement of the main theorem} In order to state our main theorem, we need several more notations.  First, for a subset $A$ of a group $G$ we denote by $A^{(n)}$ the set $\{a^n \mid a \in A\}$.  Also, if $G$ is a group and $A \subseteq G$ we denote by $\normalclosure{A}_G$ the normal closure of $A$ in $G$.  

Suppose that $G$ is a subgroup of $\Mod(S)$, that $\X$ is a $G$--overlapping family of homotopy classes of subsurfaces of $S$, and that $\F \subseteq G$ is a family of mapping classes carried by $\X$.  Also suppose that $Y = hX$ for some $X \in \X$, $h \in G$.  For $n > 0$ we define
\[
R_Y = h \left(\normalclosure{\left(\sigma^{-1}(X)\right)^{(n)}}_{\Stab_G(X)}\right) h^{-1}.
\]
This subgroup of $G$ is well-defined, independent of $h$.  For $X \in \X$, the group $R_X$ is the normal closure in $\Stab_G(X)$ of the $n$th powers of the elements of $\F$ that are supported in $X$.  For $Y = hX$ we have $R_Y$ is the conjugate of $R_X$ by $h$.  We emphasize that the notation $R_Y$ does not reflect the dependence on $\F$ and $n$.

Finally, we denote by $a_Y$ and $b_Y$ the numbers of annular and non-annular components of a subsurface $Y \subseteq S$ (or a homotopy class).

\begin{theorem}
\label{thm:main}
Let $S = S_{g,p}^b$, let $G$ be a subgroup of $\Mod(S)$, let $\X$ be a $G$--overlapping family of subsurfaces of $S$, and let $\F$ be a finite, $G$--independent family of mapping classes that are carried by $\X$ and are NEC in $G$.  There is an $N > 0$ with the following properties:
\begin{enumerate}
\item for each $n \geq N$ and any set of orbit representatives $\calY$ for the action of $\normalclosure{\F^{(n)}}_{G}$ on $G \cdot \X$ we have
\begin{equation*}
\normalclosure{\F^{(n)}}_{G} \cong \BigFreeProd{Y \in \calY} R_Y, \  and
\end{equation*} 
\item further, for each $Y \in \calY$ we have
\begin{equation*}
R_Y \cong F_{\infty}^{b_Y} \times \Z^{a_Y}.	
\end{equation*}
\end{enumerate}
\end{theorem}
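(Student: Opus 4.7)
The plan is to derive Theorem~\ref{thm:main} by constructing a Bestvina--Bromberg--Fujiwara projection complex on which $G$ acts and then applying Theorem~\ref{thm:spin} to the induced action of $\normalclosure{\F^{(n)}}_{G}$. Specifically, I would build a projection complex $\P_K$ whose vertex set is $G \cdot \X$, with projection data assembled from Masur--Minsky subsurface projections between the connected components of the vertices. The $G$-overlapping hypothesis ensures that for any two distinct vertices $Y, Y' \in G \cdot \X$ every component of $Y$ overlaps every component of $Y'$, so each component-wise subsurface projection is defined and has bounded diameter. Packaging these into a single projection from $Y'$ to $Y$ inside the disjoint union of curve graphs of components of $Y$ is where the novelty of disconnected-subsurface vertices enters, and where one must verify the Bestvina--Bromberg--Fujiwara projection axioms for a suitable constant $K$.

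Next, I would show that there is an $N > 0$ so that for every $n \geq N$ the action of $\normalclosure{\F^{(n)}}_{G}$ on $\P_K$ is spinning in the sense of Theorem~\ref{thm:spin}. Since $\F$ is carried by $\X$, each $f \in \F$ fixes some $X \in \X$ as a vertex, and the NEC hypothesis guarantees that $\langle f \rangle$ is normal in its elementary closure so that $f^n$ acts on the link of $X$ by large projection translations once $n$ is sufficiently large. The $G$-independence of $\F$ ensures that the fixed vertices of different generators lie in distinct $G$-orbits. Theorem~\ref{thm:spin} then yields the free product decomposition of item (1), with factors given by the vertex stabilizers $R_Y$ of orbit representatives $Y \in \calY$.

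To identify each $R_Y$, write $Y = Y_1 \sqcup \cdots \sqcup Y_k$ as the disjoint union of its connected components. Mapping classes with disjoint support commute, so $R_Y$ decomposes as a direct product of the subgroups generated by the $n$th powers of the elements of $\F$ supported on the various $Y_j$ together with their $\Stab_G(Y)$-conjugates. For each annular component the contribution is the infinite cyclic group generated by the relevant $n$th power of a Dehn twist, yielding the $\Z^{a_Y}$ factor. For each non-annular $Y_j$, the contribution is the normal closure inside $\Stab_G(Y_j)$ of the $n$th powers of the partial pseudo-Anosov elements of $\F$ supported there, which I would show is isomorphic to $F_\infty$ by applying Theorem~\ref{thm:spin} a second time---now to the action of $\Stab_G(Y_j)$ on a classical projection complex built from the curve graph of $\bar Y_j$, where NEC forces the relevant vertex stabilizers to be infinite cyclic. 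Combining across components yields $R_Y \cong F_\infty^{b_Y} \times \Z^{a_Y}$, completing item (2).

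The hard part will be the first two steps: verifying the Bestvina--Bromberg--Fujiwara projection axioms for the projection complex built from disconnected subsurfaces, and producing a single universal $N$ for which the spinning hypothesis of Theorem~\ref{thm:spin} holds uniformly over all $f \in \F$ and all vertices of $G \cdot \X$. This requires simultaneously controlling the projection constant $K$, the translation lengths of partial pseudo-Anosov elements on the curve graphs of their supports, and the subsurface projection bounds between distinct $G$-orbits, all under the quite restrictive $G$-overlapping hypothesis.
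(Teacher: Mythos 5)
Your proposal follows essentially the same route as the paper: the projection complex on $G\cdot\X$ with summed component-wise subsurface projections, a first application of Theorem~\ref{thm:spin} (via the quantitative translation-length conclusion of Theorem~\ref{thm:base} on each component's curve graph) to verify the spinning condition, and a second application to identify the non-annular factors of $R_Y$ as $F_\infty$. The only detail you elide is that ``disjoint supports commute'' gives a surjection from the abstract direct product onto $R_Y$ but not automatically an injection (boundary-twist relations could intervene); the paper rules this out using exactly the positive translation lengths on the component curve graphs that you already have in hand.
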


Since every $f \in \Mod(S)$ has a power $p$ that is NEC in $G$, and since a power of an NEC mapping class is NEC, we may remove the NEC hypothesis from Theorem~\ref{thm:main} at the expense of replacing ``for all $n > N$'' with ``there exists an $N$ so that for all multiples $n$ of $N$''; specifically, $N$ is chosen so that each element of $\F^{(N)}$ is NEC in $G$.  In other words, if we remove the NEC hypothesis, the conclusion holds for some specific $N$ (and its multiples) instead of all sufficiently large $n$.

It follows from the proof that there exists a set of orbit representatives $\calY$ so that the isomorphism in the first statement of Theorem~\ref{thm:main} is given by inclusion.

\ip{Connection to the deep relations question of Ivanov} Let $\T_k(S)$ denote the normal subgroup of $\Mod(S)$ generated by all $k$th powers of Dehn twists.  Ivanov asked \cite[\S 12]{ivanov15} if $\T_k(S)$ has a presentation where the generators are all $k$th powers of Dehn twists and the relations are the obvious ones, namely the relations
\[
T_d^kT_c^kT_d^{-k} = T_{T_d^k(c)}^k.
\]
This question was recently answered in the affirmative by Dahmani~\cite{ar:Dahmani18}.  In the proof, Dahmani applies a version of the Dahmani--Guirardel--Osin machinery.

One interpretation of our Theorem~\ref{thm:main} is that there is a presentation for the group $\normalclosure{\F^{(n)}}_{G}$ where the set of generators is the union of all $G$--conjugates of $\F^{(n)}$, and where the relations are the obvious ones.  Specifically, the generators are of the form $hfh^{-1}$ with $f \in \F^{(n)}$ and $h \in G$.  And, denoting each such generator $hfh^{-1}$ as $f_h$, the relations are all equalities of the form
\[
g_k f_h g_k^{-1} = f_{g_kh},
\]
These relations are indeed obvious, as can be seen by expanding them out.  Once we have the presentation of the normal closure given by Theorem~\ref{thm:main}, we can obtain the presentation here by repeatedly performing the Tietze transformation of adding a new generator and writing it in terms of the old generators.  In this sense, our result is analogous to Dahmani's.  

\begin{figure}[ht]
\centering
\begin{tikzpicture}[scale=.65]
\def\s{white!85!blue}
\filldraw[\s] (-4,-1.5) rectangle (-0.85,1.5);
\filldraw[\s] (4,-1.5) rectangle (0.86,1.5);
\filldraw[\s] (-0.34,-1.5) rectangle (0.35,1.5);
\draw[thick,fill=\s] (4,1.5) .. controls (8,1.25) and (8,-1.25) .. (4,-1.5);
\draw[thick,fill=\s] (-4,1.5) .. controls (-8,1.25) and (-8,-1.25) .. (-4,-1.5);
\fill[\s] (-4,1.5) .. controls (0,1.65) .. (4,1.5);
\fill[\s] (-4,-1.5) .. controls (0,-1.65) .. (4,-1.5);
\fill[white] (-0.84,1.6) rectangle (-0.35,-1.6);
\fill[white] (0.84,1.6) rectangle (0.35,-1.6);
\draw[thick] (-4,1.5) .. controls (0,1.65) .. (4,1.5);
\draw[thick] (-4,-1.5) .. controls (0,-1.65) .. (4,-1.5);
\foreach \a in {-3.75,3.75} {
	\begin{scope}[xshift=\a cm]
	\draw[thick] (-2.2,-0.25) .. controls (-1.5,-1) and (-0.3,0) .. (0,-0.8);
	\draw[thick] (2.2,-0.25) .. controls (1.5,-1) and (0.3,0) .. (0,-0.8);
	\end{scope}
	\node (g) at (\a,-1.15) {$g/2$};
}
\foreach \a in {-5.5,-2,2,5.5}{
	\begin{scope}[xshift=\a cm,yshift=0.15cm,rotate=90]
	\begin{scope}
	\clip (-0.13,1) rectangle (-1,-1);
	\draw[thick,fill=white] (0,0.5) arc (90:270:0.25cm and 0.5cm);	
	\end{scope}
	\draw[thick,fill=white] (-0.13,-0.43) arc (-90:90:0.13cm and 0.43cm);	
	\draw[thick] (0,0.5) arc (90:270:0.25cm and 0.5cm);
	\end{scope}
}
\foreach \a in {3.25,3.75,4.25}{	
	\filldraw (\a,0.05cm) circle (0.025);
	\filldraw (-1*\a,0.05cm) circle (0.025);
}
\draw[thick,fill=\s] (-0.85,1.62) to[out=275,in=85] (-0.85,-1.62);
\draw[thick,fill=white] (-0.35,1.62) to[out=275,in=85] (-0.35,-1.62);
\draw[thick,fill=\s] (0.35,1.62) to[out=275,in=85] (0.35,-1.62);
\draw[thick,fill=white] (0.85,1.62) to[out=275,in=85] (0.85,-1.62);
\end{tikzpicture}
\caption{An orbit overlapping subsurface in $S_g$}\label{fig:g/2}
\end{figure}
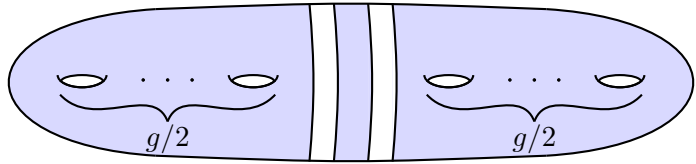

\subsection{First examples}\label{sec:1st} We list here some immediate consequences of Theorem~\ref{thm:main}, which exhibit some of the variety of applications.  In what follows, $\I(S_g)$ is the Torelli subgroup of $\Mod(S_g)$, which is defined to be the kernel of the action of $\Mod(S_g)$ on $H_1(S_g;\Z)$.  Also, we identify the pure braid group on $n$ strands with the pure mapping class group of a disk $D_n$ with $n$ marked points in the interior; see \cite[Chapter 9]{primer}.
\begin{enumerate}
\item Taking $\X = \{S\}$ and $G = \Mod(S)$ we obtain the result of Dahmani--Guirardel--Osin that the normal closure of a suitable power of a pseudo-Anosov mapping class is isomorphic to $F_\infty$.
\item Taking $\X = \{X\}$ where $X$ is connected and $\Mod(S)$--overlapping and taking $G=\Mod(S)$, we obtain that the normal closure of a suitable power of a partial pseudo-Anosov mapping class with support $X$ is isomorphic to $F_\infty$.
\item Taking $\X = \{A\}$ where $A$ is an orbit-overlapping annulus in $S$---for example an annulus dividing $S$ into two homeomorphic subsurfaces as in Figure~\ref{fig:g/2}---and taking $G = \Mod(S)$, we obtain that the normal closure of a suitable power of a Dehn twist about $A$ is isomorphic to $F_\infty$.
\item Taking $\X = \{A \cup B\}$ where $A$ is as in example (3) and $B$ is the complement of an open neighborhood of $A$ (see Figure~\ref{fig:g/2}), taking $f_A$ to be a suitable power of a Dehn twist about $A$, taking $f_B$ to be a suitable power of a partial pseudo-Anosov mapping class supported on one component of $B$, and taking $G = \Mod(S)$, we obtain that the normal closure of $\{f_A,f_B\}$ in $\Mod(S)$ is isomorphic to 
\[
\FreeProdInf \left(F_{\infty} \times F_{\infty} \times \Z\right).
\]
\item Taking $\X = \{A\}$ where $A$ is any separating annulus in $S_g$  and taking $G = \I(S_g)$, we obtain that the normal closure in $\I(S_g)$ of a suitable power of a Dehn twist about $A$ is isomorphic to $F_\infty$.  Similarly the normal closure in $\I(S_g)$ of a suitable power of any partial pseudo-Anosov element of $\I(S_g)$ is isomorphic to $F_\infty$. 
\item Taking $\X=\{A\}$ to be any subsurface of $D_n$ and taking $G$ to be the pure braid group, and taking $f$ to be any partial pseudo-Anosov element or Dehn twist supported on $A$, we obtain that the normal closure of a suitable power of $f$ is isomorphic to $F_\infty$.  
\item Taking $\X=\{A \cup B \cup C\}$ where $A$ is an annulus in $D_n$ surrounding more than two marked points but fewer than $n-1$ marked points, taking $B$ and $C$ to be the complementary regions to open neighborhood of $A$, and taking $G$ to be the pure braid group, and taking $f_A$, $f_B$, and $f_C$ to be partial pseudo-Anosov elements and Dehn twists with supports equal to $A$, $B$, and $C$, we obtain that the normal closure of suitable powers of $f_A$, $f_B$, and $f_C$ is isomorphic to
\[\FreeProdInf \left(F_\infty \times F_\infty \times \Z\right). \] 
\item Taking $\X = \{A_1 \cup \cdots \cup A_n\}$ where the $A_i$ are pairwise disjoint and each $A_i$ is the support of a partial pseudo-Anosov mapping class $f_i$ and taking $G$ to be the subgroup of $\Mod(S)$ consisting of elements that preserve each $A_i$, we obtain that the normal closure in $G$ of suitable powers of the $f_i$ is isomorphic to the direct product of $n$ copies of $F_\infty$.
\end{enumerate}

\subsection{Application: non-congruence normal subgroups}\label{sec:appnc} We give here an application of Theorem~\ref{thm:main} to the general theory of normal subgroups of the mapping class group.  The following theorem answers a question asked in (the first version of) a paper by Lanier and the third author \cite{lmv1}.

\begin{theorem}
\label{thm:cong}
Let $g \geq 2$.   There exists a proper normal subgroup of $\Mod(S_g)$ that is not contained in any proper level $m$ congruence subgroup of $\Mod(S_g)$.
\end{theorem}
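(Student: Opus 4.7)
The plan is to apply Theorem~\ref{thm:main} with $G = \Mod(S_g)$, $\X = \{S_g\}$, and $\F = \{f\}$ for a single pseudo-Anosov mapping class $f \in \Mod(S_g)$, chosen so that the resulting normal closure escapes every congruence subgroup. Let $\rho \colon \Mod(S_g) \to \mathrm{Sp}(2g,\Z)$ denote the symplectic representation on $H_1(S_g;\Z)$, whose mod-$m$ kernel is $\Mod(S_g)[m]$. For any pseudo-Anosov $f$, Theorem~\ref{thm:main} yields $N := \normalclosure{f^n}_{\Mod(S_g)} \cong F_\infty$ for every sufficiently large admissible exponent $n$ (multiples of a fixed power of $f$ that is NEC in $\Mod(S_g)$), since $b_{S_g} = 1$, $a_{S_g} = 0$, and $S_g$ is fixed by all of $G$, so the orbit set $\calY$ is a singleton and $R_{S_g} \cong F_\infty$. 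Because $\Mod(S_g)$ has torsion for $g \geq 2$ while $F_\infty$ does not, $N$ is automatically a proper subgroup.

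Writing $A = \rho(f)$, the subgroup $N$ is normally generated by $f^n$, so its image in $\mathrm{Sp}(2g,\Z/m)$ lies in the normal closure of $A^n \bmod m$ inside $\rho(\Mod(S_g))$; hence $N \subseteq \Mod(S_g)[m]$ if and only if $A^n \equiv I \pmod m$. It therefore suffices to exhibit a pseudo-Anosov $f$ and an admissible exponent $n$ for which the integer matrix $A^n - I$ is \emph{primitive}, i.e., its entries have greatest common divisor equal to~$1$: primitivity rules out $A^n \equiv I \pmod m$ for every $m \geq 2$ simultaneously.

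For the existence of such $(f,n)$: the representation $\rho$ is surjective and every coset of the Torelli group $\I(S_g)$ contains pseudo-Anosov representatives, so we may choose $f$ so that $A$ is any prescribed matrix in $\mathrm{Sp}(2g,\Z)$. Take $A = \mathrm{diag}(H, I_{2g-2})$ with $H = \bigl(\begin{smallmatrix}2 & 1\\ 1 & 1\end{smallmatrix}\bigr) \in \mathrm{SL}(2,\Z)$; then $A^n - I$ vanishes outside the upper-left $2 \times 2$ block, so primitivity of $A^n - I$ is equivalent to primitivity of $H^n - I$. A direct Fibonacci computation shows that $\gcd(H^n - I) = 1$ for infinitely many $n$; for instance $n = 7$ gives $H^n - I = \bigl(\begin{smallmatrix}20 & 13\\ 13 & 7\end{smallmatrix}\bigr)$, whose entries have gcd $1$. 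An elementary density argument (considering for each prime $p$ the order $k_p$ of $H$ modulo $p$, and discarding the residues $n \in k_p\Z$) then produces such $n$ arbitrarily large in any prescribed arithmetic progression of NEC-admissible exponents.

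The main obstacle is this arithmetic compatibility step: we must simultaneously satisfy the threshold and NEC-divisibility constraints on $n$ imposed by Theorem~\ref{thm:main} and the primitivity condition on $A^n - I$. For the explicit block-diagonal $A$ above, the primitivity verification reduces to an elementary $\mathrm{SL}(2,\Z)$ calculation, and the remaining topological input—that the prescribed $A$ is realized by a pseudo-Anosov of $S_g$—follows from surjectivity of $\rho$ together with the standard fact that pseudo-Anosov representatives populate every coset of~$\I(S_g)$.
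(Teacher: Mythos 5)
Your overall architecture (free normal closure via Theorem~\ref{thm:main} with $\X=\{S_g\}$, plus an arithmetic condition on the homological action forcing escape from every $\Mod(S_g)[m]$) is reasonable, and the reduction ``$N\subseteq\Mod(S_g)[m]$ iff $A^n\equiv I\pmod m$'' is correct. The gap is in the arithmetic existence step, and it is fatal as written. First, your explicit computation is wrong: for $H=\left(\begin{smallmatrix}2&1\\1&1\end{smallmatrix}\right)$ one has $H^7=\left(\begin{smallmatrix}610&377\\377&233\end{smallmatrix}\right)$, so $H^7-I=\left(\begin{smallmatrix}609&377\\377&232\end{smallmatrix}\right)=29\cdot\left(\begin{smallmatrix}21&13\\13&8\end{smallmatrix}\right)$, which is \emph{not} primitive; the matrix $\left(\begin{smallmatrix}20&13\\13&7\end{smallmatrix}\right)$ you wrote is $M^7-I$ for the determinant-$(-1)$ matrix $M=\left(\begin{smallmatrix}1&1\\1&0\end{smallmatrix}\right)$, which is not symplectic. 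Worse, the failure is structural, not a typo: since $H=M^2$, the entries of $H^n-I$ are $F_{2n+1}-1$, $F_{2n}$, $F_{2n-1}-1$, and classical identities ($F_{2n}=F_nL_n$, $F_{2n\pm1}-1=F_{n}L_{n\pm1}$ or $F_{n\pm1}L_n$ according to parity) show that $\gcd(H^n-I)$ is divisible by $F_n$ for $n$ even and by $L_n$ for $n$ odd. So $H^n-I$ is primitive only for $n=1,2$, and there is no admissible exponent at all. This also exposes why your ``elementary density argument'' does not work: you must avoid the congruence $n\equiv 0\pmod{k_p}$ for \emph{infinitely many} primes $p$ simultaneously, and the union of these arithmetic progressions can (and here does) cover every $n\geq 3$ --- this is exactly the phenomenon governed by primitive-divisor theorems for Lucas sequences. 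Whether \emph{some} hyperbolic $A\in\mathrm{Sp}(2g,\Z)$ has $A^n-I$ primitive for arbitrarily large $n$ lying in a prescribed progression of NEC-admissible exponents is a genuinely delicate number-theoretic question that you have not addressed.

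The paper avoids this entirely by using \emph{two} normal generators rather than one. It takes $f_1=T_cT_d^{-1}$ and $f_2=T_cT_d^{-2}$ with $c$ nonseparating, $d$ separating, and $c\cup d$ filling; the Thurston construction makes these pseudo-Anosov, Lemma~\ref{lem:asym curves} (a stretch-factor/quadratic-field argument) makes them normally independent, and Lemma~\ref{lem:thurston} makes them NEC. Since $T_d$ acts trivially on $H_1(S_g;\Z)$, the element $f_i^{p_i}$ acts homologically as the $p_i$th power of a transvection, so $f_i^{p_i}\in\Mod(S_g)[m]$ iff $m\mid p_i$; choosing $p_1,p_2$ to be distinct large primes kills every $m\geq 2$ at once, while Theorem~\ref{thm:base} still gives a free (hence proper) normal closure for the pair. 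If you want to salvage a single-generator argument you would need to prove the primitivity statement for some specific $A$, which is substantially harder than the coprimality trick; otherwise, switch to two generators with coprime exponents as the paper does.
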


To prove Theorem~\ref{thm:cong}, we first choose independent NEC pseudo-Anosov mapping classes $f_1$ and $f_2$, each of whose actions on $H_1(S_g;\Z)$ is equal to that of a Dehn twist about some nonseparating curve.  Then for distinct  large primes $p_1$ and $p_2$, the normal closure $N$ of $\{f_1^{p_1},f_2^{p_2}\}$ is the desired subgroup.  Indeed, Theorem~\ref{thm:main} implies $N$ is free (hence proper) and it is evident that $N$ is not contained in any level $m$ congruence subgroup.  The main work is to prove that $f_1$ and $f_2$ exist.  In the process, we make $f_1$ and $f_2$ (but not $p_1$ and $p_2$) explicit.

After learning about our work, Ashot Minasyan pointed out to us that Theorem~\ref{thm:cong} can also be derived from earlier work of Hull \cite{ar:Hull16}.  We explain the details of this argument in Section~\ref{sec:applications}. 

\subsection{Which RAAGs?}\label{sec:which} Putting the two statements of Theorem~\ref{thm:main} together and applying the restrictions on $a_Y$ and $b_Y$ from Corollary~\ref{co:axbx} (mentioned above), we see that when $G = \Mod(S)$ the group $\normalclosure{\F^{(n)}}_G$ is isomorphic to a free product of the groups \begin{equation*}
F_{\infty}, \quad \FreeProdInf (F_{\infty} \times \Z), \quad \FreeProdInf (F_{\infty} \times F_{\infty}), \ \ \text{and} \ \ \FreeProdInf (F_{\infty} \times F_{\infty} \times \Z);
\end{equation*}
We prove in Section~\ref{sec:examples} that all possibilities occur, that is, any group that is a free product of groups \[
\FreeProdInf F_\infty^{b_i} \times \Z^{a_i},
\]
where $a_i \in \{0,1\}$ and $b_i \in \{0,1,2\}$ for all $i$, is isomorphic to some $\normalclosure{\F^{(n)}}_{\Mod(S)}$; see Theorem~\ref{thm:examples}.  In particular, this gives the first examples of normal, non-free, right-angled Artin subgroups of the mapping class group.

Theorem~\ref{thm:main} applies to mapping classes whose supports are ``sufficiently large.''  On the other hand, a result of Brendle and the third author roughly states that the normal closure of any mapping class with ``sufficiently small'' support is not isomorphic to a right-angled Artin group \cite[Corollary 1.4]{bm}.  This leads us to the following conjecture.

\begin{conjecture}
Let $S=S_{g,p}$.  If $N$ is a nontrivial normal subgroup of $\Mod(S)$ and $N$ is isomorphic to a right-angled Artin group then $N$ is isomorphic to one of the right-angled Artin subgroups of $\Mod(S)$ afforded by our construction.  In particular, $N$ is isomorphic to a free product of the groups
\[ F_\infty, \qquad \FreeProdInf \left(F_\infty \times \Z\right), \qquad \FreeProdInf \left(F_\infty \times F_\infty\right),\quad  \text{ and } \quad \FreeProdInf \left(F_\infty \times F_\infty \times \Z\right).\]
\end{conjecture}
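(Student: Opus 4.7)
Since the conjecture is open, the sketch below is a research strategy rather than a proof outline. The approach is to show that any normal subgroup $N$ of $\Mod(S)$ isomorphic to a right-angled Artin group satisfies the hypotheses of Theorem~\ref{thm:main} for some $\F$ and $\X$, then apply that theorem. First, since right-angled Artin groups are torsion-free, every nontrivial $f \in N$ has infinite order, so by the Nielsen--Thurston classification $f$ has a well-defined canonical reduction system and canonical support. The Brendle--Margalit theorem cited in the paper rules out certain supports for the normal closure of a single element, but for the conjecture one needs the stronger statement that if $N$ contains an element whose canonical support is \emph{too small} in the Brendle--Margalit sense, then a non-RAAG obstruction actually lies inside $N$. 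This will likely require adapting the Brendle--Margalit argument so that the commuting witnesses they produce in $\normalclosure{f}_{\Mod(S)}$ already lie in $N$ itself.

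The central step is to construct a $\Mod(S)$--overlapping family $\X$ of subsurfaces and a $\Mod(S)$--independent $\F \subseteq N$ carried by $\X$ with $N = \normalclosure{\F^{(n)}}_{\Mod(S)}$ for some $n$. The RAAG hypothesis provides the key leverage: if $f_1, f_2 \in N$ have disjoint canonical supports then they commute and generate a $\Z^2$ inside $N$, and by normality the same holds for every $\Mod(S)$--conjugate of $f_1$ whose support is disjoint from that of $f_2$. If some canonical support $X$ admitted infinitely many pairwise disjoint $\Mod(S)$--translates, one could assemble from these an abelian subgroup of $N$ whose rank exceeds any clique in the defining graph of the RAAG, contradicting the well-known fact that abelian subgroups of a RAAG have rank bounded by clique size and are contained in centralizers of cliques. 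This should force each canonical support to be $\Mod(S)$--overlapping, and similarly force distinct orbits to overlap pairwise, yielding the family $\X$. One then extracts from $N$ the partial pseudo-Anosov and multitwist components appearing in its elements, replaces by appropriate powers to ensure NEC, and applies Theorem~\ref{thm:main}; the list of allowed free factors then follows from Corollary~\ref{co:axbx}.

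The main obstacle will be ruling out intermediate overlap configurations. The $\Mod(S)$--overlapping condition is very strong---either equal or essentially intersecting---and it is a priori conceivable that $N$ contains elements whose supports have $\Mod(S)$--translates that partially overlap in ways that fall outside both the overlapping framework of Theorem~\ref{thm:main} and the small-support framework of Brendle--Margalit. Closing this gap, probably by strengthening the non-RAAG obstructions so that they apply to every nontrivial subgroup containing the offending element, together with a delicate case analysis of canonical reduction systems under the commutation structure of a RAAG, is the likely bottleneck to turning the conjecture into a theorem.
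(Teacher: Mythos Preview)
You have correctly recognized that this statement is a conjecture, not a theorem: the paper states it as an open problem and offers no proof, so there is nothing in the paper to compare your proposal against. Your decision to present a research strategy rather than a proof is the appropriate response.

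A few remarks on the strategy itself. First, the conjecture only asks that $N$ be \emph{isomorphic} to one of the RAAGs produced by the construction; your plan aims at the stronger conclusion that $N$ literally equals $\normalclosure{\F^{(n)}}_{\Mod(S)}$ for some admissible $\F$ and $\X$. That may well be false even if the conjecture is true, and in any case it is more than you need. A softer target would be to constrain the isomorphism type of $N$ directly, for instance by bounding the clique number of its defining graph and analyzing centralizers, without ever identifying $N$ with a specific normal closure.

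Second, your abelian-rank argument shows at most that no support of an element of $N$ has \emph{infinitely many} pairwise disjoint $\Mod(S)$--translates. This is far weaker than the $\Mod(S)$--overlapping condition, which demands that \emph{every} translate either equals the original or overlaps it; a support with exactly two disjoint translates already fails to be overlapping yet passes your rank test. You acknowledge this gap in your final paragraph, but it is worth emphasizing that it is not a technicality: bridging from ``bounded number of disjoint translates'' to ``overlapping'' is essentially the whole difficulty, and no known technique does this. The Brendle--Margalit obstruction and the present paper's construction genuinely leave a middle range of supports unaddressed, which is precisely why the statement remains a conjecture.
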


For specific values of $g$ and $n$ the conjecture may be sharpened.  For instance, when $S=S_g$ the conjecture says that the right-angled Artin group is isomorphic to a free product of the groups
\[ F_\infty, \qquad \FreeProdInf \left(F_\infty \times F_\infty\right),\quad  \text{ and } \quad \FreeProdInf \left(F_\infty \times F_\infty \times \Z\right),\]
since the group $\FreeProdNoArg (F_\infty \times \Z)$ does not arise from our construction in this case (this is a consequence of Proposition~\ref{prop:overlapping}); cf. Theorem~\ref{thm:examples2}.  Similarly, when at least one of $g$ or $n$ is odd, the group $\FreeProdNoArg (F_\infty \times F_\infty \times \Z)$ does not appear, etc.


\ip{Other normal free groups...} Farb explicitly asked whether or not a pseudo-Anosov mapping class has a power with free normal closure \cite[Question 2.9]{farbprobs}.  Before the work of Dahmani--Guirardel--Osin, the only known examples of free, normal subgroups of the mapping class group were the ones due to Whittlesey \cite{whittle}.  She proved that the Brunnian subgroup of $\Mod(S_{0,n})$ is free.  As a consequence, she proved that there is a corresponding free, normal, all pseudo-Anosov subgroup of $\Mod(S_{2})$.

\ip{...and non-normal RAAGs}  Long before the work of Dahmani--Guirardel--Osin, it was proven that if two curves have geometric intersection number greater than 1 then the corresponding Dehn twists generate a free group of rank 2.  This result was proved by Ishida \cite{ishida} and Hamidi--Tehrani \cite{hht}, and also appears in Handel's notes from Thurston's course on mapping class groups in Princeton from 1975.  Ivanov \cite[Corollary 8.4]{ivanovbook} and McCarthy \cite{mccarthy} proved that high powers of independent pseudo-Anosov mapping classes generate a free group of rank 2.  Since then there have been many different constructions of non-normal free subgroups of the mapping class group with various properties; see, for instance, the work of Fujiwara \cite{kf} and of the second author \cite{ar:Mangahas10,jm1}.

Beyond free groups, there are a number of papers devoted to the problem of finding right-angled Artin subgroups of mapping class groups.  Constructions of such subgroups are given by Leininger and the first two authors of this paper \cite{CLM}, Crisp--Farb \cite{crispfarb}, Crisp--Paris \cite{crispparis}, Crisp--Wiest \cite{crispwiest}, Koberda \cite{koberda}, L\"onne \cite{lonne}, Runnels \cite{Runnels}, and Seo \cite{Seo}.  Each of these works has its own points of emphasis, but one over-arching theme is that every finitely generated right-angled Artin group is isomorphic to a subgroup of some mapping class group.  In all of these works, the resulting subgroup is not normal.

\subsection{Which powers?}\label{sec:whichpowers} As mentioned, the special case of Theorem~\ref{thm:main} where each element of $\F$ is pseudo-Anosov is due to Dahmani--Guirardel--Osin.  The most obvious distinctions between their work and ours are that Theorem~\ref{thm:main} applies to certain types of reducible mapping classes, and also that our normal closures are not always free groups.  Beyond this, we help clarify the situation as to which powers of a pseudo-Anosov mapping class do, and do not, have free normal closure.  We now discuss these two points in more detail.

\ip{Which powers do have free normal closure?} In a forthcoming paper \cite{powers} we apply the techniques of this paper to show the following.
\begin{enumerate}
\item If $f \in \Mod(S_{g,p})$ is an NEC pseudo-Anosov element then $\normalclosure{f^N}$ is free for \[N \geq \mathrm{exp}\left(\mathrm{exp}\left(10^6\cdot \delta_{g,p}^2\cdot(3g-3+p)^2\right)\right).\]
\item If $c$ is an orbit overlapping curve and $p=0$ then $\normalclosure{T_c^N}$ is free for $N \geq 53,489$.
\end{enumerate}
Here, $\delta_{g,p}$ is any hyperbolicity constant for the curve graph of $S_{g,p}$; it has been shown that $\delta_{g,p}$ can be taken to be 17, independently of $S_{g,p}$~\cite{ar:HPW15}.

It is an interesting problem to sharpen these values of $N$.  For the case of an orbit overlapping curve, we can see that $N=1$ does not suffice to ensure that $\normalclosure{T_c^N}$ is free.  For example, when $c$ is a curve of genus $g/2$ in $S_g$, the lantern relation can be used to exhibit the non-freeness (if $a$, $b$, and $c$ are curves of genus $g/2$ that lie in a 4-holed sphere $L$ in $S_g$, then by the lantern relation we have $T_aT_b=T_c^{-1}M$ where $M$ is the multitwist about the boundary of $L$, from which we see that $T_aT_b$ and $T_c$ commute but do not have a common power).  On the other hand, we do not know whether or not $\normalclosure{T_c^2}$ is free.  Similarly, $N=1$ does not suffice in the pseudo-Anosov case: as explained below in Proposition~\ref{prop:nec}, every pseudo-Anosov element of the Torelli group $\I(S_g)$ is NEC, and Lanier and the third author proved \cite[Theorem~1.3]{lm} that there is a  pseudo-Anosov element of $\I(S_g)$ whose normal closure in $\Mod(S_g)$ is equal to $\I(S_g)$.

\ip{Which powers do not have free normal closure?}  It follows from the work of Dahmani--Guirardel--Osin---specifically Corollary 6.36 and Theorem 6.8 in their paper \cite{ar:DGO17}---that sufficiently large powers of NEC pseudo-Anosov mapping classes have free normal closure (this is also a special case of Theorem~\ref{thm:main}).  Many results about pseudo-Anosov mapping classes hold for sufficiently large powers---for example the result of Ivanov and McCarthy mentioned above.  Thus, one may be tempted to think that  all sufficiently large powers of a pseudo-Anosov mapping class have free normal closure.  The following proposition shows that this is not the case.

\begin{proposition}
\label{prop:nec}
Let $f \in \Mod(S)$ be a pseudo-Anosov mapping class that is not NEC.  Then for arbitrarily large $n$, the group $\normalclosure{f^n}$ contains a nontrivial periodic element; in particular, $\normalclosure{f^n}$ is not free.
\end{proposition}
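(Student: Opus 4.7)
The plan is to analyze $\EC(f)$ directly via McCarthy's structural theorem, which gives a short exact sequence
\[
1 \to F \to \EC(f) \to Q \to 1
\]
with $F$ finite and $Q$ isomorphic to $\Z$ or $D_\infty$. The image $\bar f$ of $f$ in $Q$ has infinite order, so it lies in the unique maximal cyclic subgroup of $Q$ (in particular, it is a translation when $Q = D_\infty$). Consequently, conjugation by any $h \in \EC(f)$ sends $\bar f$ to either $\bar f$ or $\bar f^{-1}$, and so
\[
hfh^{-1} = \kappa f^{\varepsilon} \qquad \text{for some } \kappa \in F,\ \varepsilon \in \{\pm 1\}.
\]
Because $f$ has infinite order and $F$ is finite, $\grp{f}\cap F = \{1\}$, and therefore $hfh^{-1} \in \grp{f}$ if and only if $\kappa = 1$.

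Since $f$ is not NEC, $\grp{f}$ is not normal in $\EC(f)$, so I can pick $h \in \EC(f)$ for which the decomposition above has $\kappa \neq 1$. Assume first that $\varepsilon = +1$; the case $\varepsilon = -1$ is parallel. Let $\phi(x) = fxf^{-1}$ denote conjugation by $f$ on $F$, which is an automorphism of $F$ since $F \triangleleft \EC(f)$. The identity $\kappa f = \phi(\kappa) f$ together with a one-line induction yields
\[
(\kappa f)^{n} = \kappa \cdot \phi(\kappa) \cdot \phi^{2}(\kappa) \cdots \phi^{n-1}(\kappa) \cdot f^{n} = P_{n}\, f^{n},
\]
so $hf^{n}h^{-1} = P_{n} f^{n}$, and
\[
\tau_{n} := hf^{n}h^{-1}\cdot f^{-n} = P_{n} \in F
\]
is a periodic element of $\normalclosure{f^{n}}_{\Mod(S)}$ (membership is clear, since both $hf^{n}h^{-1}$ and $f^{-n}$ manifestly belong to the normal closure).

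It remains to verify that $P_{n} \neq 1$ for arbitrarily large $n$. The recurrence $P_{n+1} = P_{n}\cdot \phi^{n}(\kappa)$ shows that if $P_{n} = P_{n+1} = 1$ for any $n$, then $\phi^{n}(\kappa) = 1$, and since $\phi$ is an automorphism this forces $\kappa = 1$, contradicting our choice of $h$. Hence $P_{n} \neq 1$ for at least one of each pair of consecutive indices, and in particular infinitely often, producing the required nontrivial periodic element $\tau_{n}$ in $\normalclosure{f^{n}}_{\Mod(S)}$.

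I do not anticipate a significant obstacle; the argument reduces to bookkeeping around McCarthy's structural theorem and a pigeonhole observation on the recurrence for $P_{n}$. The only case requiring separate attention is $\varepsilon = -1$ (which can occur only when $Q \cong D_\infty$), handled by the symmetric computation $(\kappa f^{-1})^{n} = Q_{n} f^{-n}$ with $Q_{n} = \kappa \cdot \phi^{-1}(\kappa) \cdots \phi^{-(n-1)}(\kappa)$, so that $hf^{n}h^{-1}\cdot f^{n} = Q_{n}$ and the same pigeonhole step produces $Q_{n} \neq 1$ for arbitrarily large $n$.
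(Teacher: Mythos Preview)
Your argument is correct and follows essentially the same route as the paper's: both use McCarthy's structure theorem to write $hfh^{-1} = f^{\pm 1}r$ (equivalently $\kappa f^{\pm 1}$) with $r$ in the finite kernel and then extract torsion from the normal closure; the paper phrases the passage to large $n$ as ``if $f$ is not NEC then arbitrarily large powers of $f$ are not NEC,'' which your explicit recurrence for $P_n$ simply unpacks. One typographical slip: the identity you need is $f\kappa = \phi(\kappa)f$, not $\kappa f = \phi(\kappa)f$; your subsequent formula $(\kappa f)^n = \kappa\,\phi(\kappa)\cdots\phi^{n-1}(\kappa)\,f^n$ is correct.
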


Indeed, any $h \in \EC(f)$ conjugates $f$ to $hfh^{-1}=f^{\pm 1}r$ for some $r$ in the finite group of symmetries of the singular Euclidean structure on $S$ associated to $f$, as these are precisely the pseudo-Anosov elements in $\EC(f)$ with the same stretch factor as $f$ (this follows from McCarthy's description of $\EC(f)$ in his unpublished paper \cite{mccarthy}).  If $f$ is not NEC, then some $h$ gives a nontrivial $r = f^{\mp 1}hfh^{-1}$ in $\normalclosure{f}$.  Moreover, if $f$ is not NEC then arbitrarily large powers of $f$ are not NEC, hence the proposition. 

It follows immediately from Proposition~\ref{prop:nec} that if $f$ lies in a normal subgroup of $\Mod(S)$ that is torsion free, then $f$ is NEC.  As an example, if $f$ lies in the level $m$ congruence subgroup of $\Mod(S_g)$ with $m \geq 3$, then $f$ is NEC in $\Mod(S_g)$.  In particular, every pseudo-Anosov element of the Torelli group $\I(S_g)$ is NEC.  

Proposition~\ref{prop:nec} shows that the NEC condition is necessary in order for a pseudo-Anosov mapping class to have free normal closure.  However it is not sufficient.  Indeed, as mentioned above Lanier and the third author gave an example of a NEC pseudo-Anosov mapping class with normal closure $\I(S_g)$.  On the other hand, the following question seems to be open: is the normal closure of an NEC pseudo-Anosov mapping class torsion free?

In the case of a closed surface, we can strengthen the conclusion of Proposition~\ref{prop:nec}: not only does $\normalclosure{f^n}$ fail to be free, but it fails to be abstractly commensurable to any right-angled Artin group.  Indeed, it follows from work of Lanier and the third author \cite[Theorem 1.1]{lm} that the normal closure of any nontrivial periodic element of $\Mod(S_g)$---hence the normal closure of $f^n$---contains $\I(S_g)$.  Also, Brendle and the third author proved that the normal closures of any subgroup of $\Mod(S_g)$ containing $\I(S_g)$ is not abstractly commensurable with any right-angled Artin group \cite[Corollary 1.4]{bm}. 

The failure of $\normalclosure{f^n}$ to be free (or abstractly commensurable to any right-angled Artin group) is underscored by the following result, which we prove in Section~\ref{sec:applications}.

\begin{theorem}
\label{thm:evenodd}
For each $g \geq 3$ there exists a pseudo-Anosov $f \in \Mod(S_g)$ so that if $n$ is odd then
\begin{align*}
\normalclosure{f^n} &= \Mod(S_g), \\
\intertext{and if $n$ is even and nonzero then}
\normalclosure{f^n} &\cong F_\infty. 
\end{align*} 
\end{theorem}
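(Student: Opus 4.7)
The plan is to take $f = r\psi^N$, where $r \in \Mod(S_g)$ is a non-hyperelliptic involution, $\psi \in \I(S_g)$ is an NEC pseudo-Anosov mapping class commuting with $r$, and $N$ is chosen sufficiently large. I would arrange $r$ to sit inside a finite non-abelian subgroup $G \leqslant \Mod(S_g)$ (for concreteness, a dihedral group $D_{2k}$, $k \geq 3$) in which $r$ is non-central, and which contains no conjugate of the hyperelliptic involution. The pseudo-Anosov $\psi$ is built to commute with all of $G$: start with Thurston's construction applied to a $G$-invariant filling pair of multicurves on $S_g$, then modify (by composing with a suitable $G$-invariant element of $\I(S_g)$, and/or passing to a power) to arrange $\psi \in \I(S_g)$ and $\psi$ NEC. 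Finally, $N$ is chosen large enough that Theorem~\ref{thm:main}, applied with $G = \Mod(S_g)$, $\X = \{S_g\}$, and $\F = \{\psi\}$, gives $\normalclosure{\psi^M}_{\Mod(S_g)} \cong F_\infty$ for every $M \geq 2N$.

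Since $r$ preserves the invariant foliations of $\psi$, the product $f = r\psi^N$ is pseudo-Anosov. For $n = 2k \neq 0$, using $r^2 = 1$ and the commutativity of $r$ with $\psi$, one computes $f^{2k} = \psi^{2kN}$, so $\normalclosure{f^{2k}} \cong F_\infty$ by the choice of $N$.

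For odd $n$, $f^n = r\psi^{nN}$, and the key identity is that, for any $s \in G$,
\[
(sf^n s^{-1})(f^n)^{-1} = (srs^{-1})\psi^{nN}\cdot \psi^{-nN}r^{-1} = (srs^{-1})r^{-1} = [s,r],
\]
using that $\psi$ commutes with both $r$ and $srs^{-1}$, which lie in $G$. Choosing $s \in G$ with $srs^{-1} \neq r$ puts a non-trivial periodic element $[s,r] \in \normalclosure{f^n}$, not conjugate to the hyperelliptic involution by our choice of $G$, so by \cite[Theorem~1.1]{lm} we have $\I(S_g) \subseteq \normalclosure{f^n}$. Meanwhile, since $\psi \in \I(S_g)$ and $n$ is odd, the image of $f^n$ in $\Mod(S_g)/\I(S_g)$ is the class of $r$; for $r$ chosen so that this image is a non-central involution whose normal closure in $\Mod(S_g)/\I(S_g)$ is all of the quotient, the group $\normalclosure{f^n}$ surjects onto $\Mod(S_g)/\I(S_g)$. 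Combined with the containment of $\I(S_g)$, this forces $\normalclosure{f^n} = \Mod(S_g)$.

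The hard part will be the explicit construction of $\psi$: producing an NEC pseudo-Anosov inside $\I(S_g)$ whose invariant foliations are preserved by the prescribed finite non-abelian subgroup $G$ (with $r$ non-central in $G$ and with $G$ avoiding the hyperelliptic involution). Ensuring $\psi \in \I(S_g)$ and NEC while preserving the $G$-symmetry of the foliations likely requires a $G$-equivariant modification of a Thurston-construction pseudo-Anosov, followed by passing to a suitable power; one must also confirm that the full symmetry group of $\psi$'s foliations does not enlarge $G$ in a way that would recenter $r$. The separate verification that the image of $r$ in $\Mod(S_g)/\I(S_g)$ has normal closure all of the quotient relies on classical normal generation results for the symplectic group.
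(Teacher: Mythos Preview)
Your approach is essentially the paper's: an involution $k$ in a dihedral group $D \leqslant \Mod(S_g)$, a pseudo-Anosov $\psi$ centralized by $D$, and $f = k\psi^N$; even powers of $f$ are powers of $\psi$ (free normal closure via Theorem~\ref{thm:base}), while for odd powers a commutator with an element of $D$ lands a nontrivial periodic element in the normal closure, and Lanier--Margalit finishes.

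The two extra conditions you impose are unnecessary and are exactly what makes your ``hard part'' hard. First, you do not need $\psi \in \I(S_g)$. The paper takes $\psi = T_A T_B^{-1}$ for $D_{2g}$--invariant filling multicurves $A$, $B$ with $|A| \neq |B|$; since no mapping class can interchange $A$ and $B$, Lemma~\ref{lem:thurston} gives NEC directly, with no Torelli hypothesis and no worry about the symmetry group of the foliations ``enlarging'' $G$. Second, the Lanier--Margalit result you invoke already yields $\normalclosure{[s,r]} = \Mod(S_g)$ whenever $[s,r]$ is nontrivial periodic and not a hyperelliptic involution---not merely containment of $\I(S_g)$---so your separate argument about normal generation in $\Mod(S_g)/\I(S_g) \cong \mathrm{Sp}(2g,\Z)$ is redundant. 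Dropping both, the construction becomes completely explicit and the difficulties you anticipate evaporate.
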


The mapping classes that appear in the proof of Theorem~\ref{thm:evenodd} are based on the ones used by Lanier and the third author to show that there are pseudo-Anosov elements of $\Mod(S_g)$ with the property that all of their odd powers are normal generators (in particular, there are normal generators for $\Mod(S_g)$ with arbitrarily large translation lengths on the curve complex) \cite[Theorem 1.4]{lm}.


\subsection{Windmills and spinning families}\label{sec:wasf} Our next goal is to explain the main technical theorem used to prove Theorem~\ref{thm:main}, namely, Theorem~\ref{thm:spin} below.  This theorem concerns the theory of group actions on projection complexes.  Briefly, a projection complex is a graph $\Gamma$ that comes equipped with a collection of  functions
\[
d_v \from V \setminus \{v\} \times V \setminus \{v\} \to \R_{\geq 0}
\]
where $V$ is the set of vertices of $\Gamma$ and $v \in V$.   Projection complexes were defined by Bestvina--Bromberg--Fujiwara; see Section~\ref{sec:back} for the full definition, along with examples and motivation.  Our definition is a mild modification of the original definition of Bestvina--Bromberg--Fujiwara in that we require the projection complex to satisfy additional properties (such as the bounded geodesic image property).

After stating Theorem~\ref{thm:spin}, we explain forthcoming work on the geometry of the corresponding quotient complexes, and then we state Theorem~\ref{thm:base}.  The latter is the simplest type of application of Theorem~\ref{thm:spin}, where the output is a free group.  In Section~\ref{sec:outline} we explain how Theorems~\ref{thm:spin} and~\ref{thm:base} are pieced together to prove Theorem~\ref{thm:main}.

\ip{Windmills in projection complexes}  Let $\P$ be a projection complex, and let $G$ be a group that acts on $\P$.  Further, for each vertex $v$ of $\P$, let $R_v$ be a subgroup of the stabilizer of $v$ in $G$.  Let $L > 0$.  We say that the family of subgroups $\{R_v\}$ is an \emph{equivariant $L$--spinning family} of subgroups of $G$ if it satisfies the following two conditions:

\begin{itemize}
\item \emph{Equivariance:} If $g$ lies in $G$ and $v$ is a vertex of $\P$ then
\[
gR_vg^{-1} = R_{gv}.
\]  
\item \emph{Spinning condition:} For any distinct vertices $v$ and $w$ of $\P$ and any nontrivial $h \in R_v$ we have
\[
d_v(w,hw) \geq L.
\]
\end{itemize}
The equivariance condition implies that for each vertex $v$ the subgroup $R_v$ is normal in $\Stab(v)$, and that the subgroup $\langle R_v \rangle$ of $G$ generated by the $R_v$ is normal in $G$.  Furthermore, if we choose orbit representatives $\{v_i\}$ for the action of $G$ on the vertices of $\P$, then $\langle R_v \rangle$ is the normal closure of the set $\left\{R_{v_i}\right\}$. 

\begin{theorem}\label{thm:spin}
Let $\P$ be a projection complex and let $G$ be a group acting on $\P$.  There exists a constant $L(\P)$ with the following property.  If $L \geq L(\P)$,  if $\{R_v\}$ is an equivariant $L$--spinning family of subgroups of $G$, and if $\calO$ is any set of orbit representatives for the action of $\langle R_v\rangle$ on the set of vertices of $\P$, then $\grp{R_v}$ is isomorphic to the free product
\[
\BigFreeProd{v\in \calO} R_v.
\]
\end{theorem}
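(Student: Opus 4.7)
The plan is to adapt the windmill construction of Dahmani--Guirardel--Osin to the setting of projection complexes. We build by induction on $n$ an ascending family of subcomplexes $W_0 \subseteq W_1 \subseteq \cdots$ of $\P$, a corresponding ascending sequence of subgroups $G_0 \leq G_1 \leq \cdots$ of $\grp{R_v}$, and nested sets of orbit representatives $\calO_0 \subseteq \calO_1 \subseteq \cdots \subseteq \calO$ for the action of $G_n$ on the vertices of $W_n$, satisfying (i) $W_n$ is $G_n$--invariant, (ii) the natural homomorphism $\BigFreeProd{v \in \calO_n} R_v \to G_n$ is an isomorphism, and (iii) $\bigcup_n W_n = \P$ and $\bigcup_n G_n = \grp{R_v}$. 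Once these are in hand, Theorem~\ref{thm:spin} follows by passing to the direct limit.

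To build the windmill, fix $v_0 \in \calO$ and set $W_0 = \{v_0\}$, $G_0 = R_{v_0}$, $\calO_0 = \{v_0\}$. Given $(W_n, G_n, \calO_n)$, let $V_n$ be the set of vertices $v \notin W_n$ whose projection $d_v$ sees $W_n$ with diameter at most a bounded threshold determined by the projection constants of $\P$. Define $W_{n+1}$ to be the union of $W_n$ with the $G_n$--orbits of the vertices of $V_n$; equivariance of $\{R_v\}$ combined with the $G_n$--invariance of $W_n$ guarantees that $G_{n+1} = \grp{G_n, R_v : v \in V_n}$ preserves $W_{n+1}$. Form $\calO_{n+1}$ by adjoining to $\calO_n$ one representative of each new $G_{n+1}$--orbit in $V_n$. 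Along the way one verifies a convexity property of $W_n$ with respect to $\P$, namely that the $d_v$--projection of any vertex outside $W_n$ factors through a definite vertex of $W_n$; this is what makes the enlargement step well-posed.

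The heart of the argument is the inductive verification of (ii). Any element $g \in G_{n+1}$ may be rewritten in a reduced alternating normal form $g = g_k h_k g_{k-1} \cdots h_1 g_0$ with $g_i \in G_n$ and each $h_i$ a nontrivial element of some $R_{v_i}$ for $v_i \in W_{n+1} \setminus W_n$. Fix a basepoint $u$ lying deep inside $W_n$ and track the orbit of $u$ under successive prefixes of $g$. The spinning condition $d_{v_i}(w, h_i w) \geq L$ together with the projection axioms (and in particular the bounded geodesic image property built into our working definition of a projection complex) ensures that each application of $h_i$ registers a large projection distance at the pivot vertex $v_i$, which cannot subsequently be cancelled by the movement of the $g_j$, whose effect on $d_{v_i}$ is bounded by the projection constant $\theta$ because the $g_j$ stabilize $W_n$ and $v_i$ is at bounded distance from $W_n$. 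Iterating this, one obtains a lower bound $d_{v_k}(u, g u) > 0$, forcing $g \neq 1$. The main obstacle is precisely this propagation: it dictates the choice of $L(\P)$, which must exceed by a definite multiple the projection constant $\theta$ so that each spinning displacement at a pivot dominates the worst-case $\theta$--bounded interference from the remainder of the word, and so that the accumulated displacements cannot telescope to zero.

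Finally, (iii) follows from connectivity of $\P$: any vertex $v \in \P$ is connected to $v_0$ by a path in $\P$, and an inductive bookkeeping on the projection distances along this path shows that $v \in W_n$ for some $n$. Consequently every $R_v$ is absorbed into some $G_n$, so $\bigcup_n G_n = \grp{R_v}$. Taking the direct limit of the isomorphisms in (ii) then identifies $\grp{R_v}$ with $\BigFreeProd{v \in \calO} R_v$, completing the proof.
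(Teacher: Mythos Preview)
Your overall architecture---build windmills $W_n$, groups $G_n$, and prove injectivity of the free product map by tracking pivot points---matches the paper's approach.  However, there is a genuine gap at the inductive step, and it is precisely the place where the paper's proof does its real work.

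You assert that the effect of each $g_j \in G_n$ on $d_{v_i}$ is bounded by the projection constant $\theta$, ``because the $g_j$ stabilize $W_n$ and $v_i$ is at bounded distance from $W_n$.''  This inference does not hold.  Knowing that $g_j$ preserves $W_n$ setwise and that $v_i$ is near $W_n$ gives no a priori control on $d_{v_i}(u, g_j u)$; what you actually need is that the $d_{v_i}$--diameter of $W_n$ itself is uniformly bounded.  You gesture at this earlier (``along the way one verifies a convexity property of $W_n$''), but this is exactly the property that fails in a general projection complex and is the main obstacle distinguishing the proof from the tree case.  In the paper, the substitute for convexity is statement~(C): for $x \in N_{i+1}$ and $v \notin W_i$ one has $d_v(v_0,x) \leq m$ with $m = 11C_e + 6C_g + 5C_p$, and the proof of~(C) is the most intricate part of the argument.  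It proceeds by constructing, when a geodesic from $x$ to $v_0$ is forced through $v$, an explicit detour built from at most ten edges, six geodesics avoiding $v$, and five paths lying in $F_i$--translates of $W_{i-2}$ (to which Lemma~\ref{Theta} applies via the bounded path image property).  Crucially, (C) must be proved simultaneously with the pivot-point statements (A) and (B) in a joint induction, since each feeds into the other.

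A secondary issue: your enlargement rule for $W_{n+1}$ (add vertices $v$ for which $W_n$ has small $d_v$--diameter) differs from the paper's (take the $H_i$--orbit of the $1$--neighborhood of $W_{i-1}$).  Your rule presupposes the very bound you need to prove, making the definition circular until (C) is established; the paper's combinatorial definition avoids this.
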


In the statement of Theorem~\ref{thm:spin}, we emphasize that $\langle R_v\rangle$ is the group generated by all of the $R_v$, not just one $R_v$.  Also, it follows from the proof that there exists a set of orbit representatives $\calO$ so that the isomorphism in Theorem~\ref{thm:spin} is given by inclusion.

To prove Theorem~\ref{thm:spin} we introduce the notion of a windmill in a projection complex; see Section~\ref{sec:windmillsprojection}.  A windmill is a subgraph of $\P$ that serves as a proxy for the Bass--Serre tree for the desired free product decomposition.  Our definition of a windmill is derived from the theory of windmills for group actions on hyperbolic spaces, due to Dahmani--Guirardel--Osin, which in turn has its origins in the work of Gromov \cite[\S 26]{gromov}.  

As we explain in Section~\ref{sec:back}, every simplicial tree can be regarded as a projection complex (and conversely a projection complex is a quasi-tree~\cite[Theorem~3.16]{ar:BBF15}).  In Section~\ref{sec:trees} we give the proof of Theorem~\ref{thm:spin} in the special case where $\P$ is a tree, as a warmup for the general case.

After the first version of this paper appeared, Bestvina--Dickmann--Domat--Kwak--Patel--Stark gave a new proof of Theorem~\ref{thm:spin}.  Their proof produces an $\langle R_v \rangle$--invariant tree in $\P$ that is the Bass--Serre tree for the given free product decomposition.  In their paper \cite{un:BDDKPS}, they also explain how to derive a version of the main result of the paper by Dahmani--Guirardel--Osin \cite[Theorem 5.3a]{ar:DGO17} from Theorem~\ref{thm:spin}.

\ip{Hyperbolicity of the quotient} As above $\T_k(S)$ denotes the normal subgroup of $\Mod(S)$ generated by $k$th powers of Dehn twists.  Building on the aforementioned work of Dahmani, it was recently shown by Dahmani--Hagen--Sisto that for suitable $k$ the quotient group $\Mod(S)/\T_k(S)$ is acylindrically hyperbolic \cite{un:DHS} (in particular the group $\T_k(S)$ has infinite index in $\Mod(S)$).  One of the major steps in the proof is to show that the quotient of the curve complex by $\T_k(S)$ is hyperbolic.  

In analogy with this result, the first two authors prove in a separate paper that under certain hypotheses, the quotient complex $\P/\grp{R_{v}}$ arising in Theorem~\ref{thm:spin} is hyperbolic and that the quotient group $G/\grp{R_{v}}$ is acylindrically hyperbolic \cite{quotient}.  The argument follows along the lines of the work of  Dahmani--Hagen--Sisto~\cite{un:DHS}.

\ip{Free groups from windmills} Our next result, Theorem~\ref{thm:base} below, is the simplest type of application of Theorem~\ref{thm:spin}, in that the $R_v$ subgroups are all isomorphic to $\Z$ (and so the output is $F_\infty$).  Theorem~\ref{thm:base} also serves as a sort of base case for Theorem~\ref{thm:main}; in the outline of the paper given below, we explain this in more depth.

For the statement we require several definitions.  First, when we say that a space is hyperbolic, we mean that it is a metric space that is $\delta$--hyperbolic in the sense of Gromov.  Fix some hyperbolic space $X$ and suppose a group $G$ acts on $X$ by isometries.  Certain types of elements of $G$ are called WPD elements (for ``weakly properly discontinuous'').  The idea is that $f$ is WPD if its translation length is positive and if long segments of a quasi-axis for $f$ have a finite coarse stabilizer; see Section~\ref{sec:eg} for the details.  One important example for our work is where $G = \Mod(S)$ and $X = \C(S)$; in this case the WPD elements are exactly the pseudo-Anosov elements of $\Mod(S)$.  

Each WPD element $f \in G$ has two fixed points in $\partial X$, and so the elementary closure can be defined for $f$ in the same way as for pseudo-Anosov elements of $\Mod(S)$: it is the stabilizer of this pair of points in $\partial X$.  We say that $f$ is NEC if it is normal in its elementary closure.  Also, we say that $f_1$ and $f_2$ are independent if $\EC(f_{1}) \cap \EC(f_{2})$ is finite and they are normally independent if every conjugate of $f_1$ is independent of $f_2$.  Finally, a family of WPD elements is normally independent if they are pairwise normally independent.

\begin{theorem}\label{thm:base}
Suppose $G$ acts on a hyperbolic space $X$ and 
$\{f_1,\ldots,f_m\} \subseteq G$
is a normally independent collection of NEC WPD elements of $G$.  For any $t \geq 0$ there is a constant $N$ such that for $n \geq N$ the group
\begin{equation*}
\normalclosure{f_1^n,\dots,f_m^n}_{G}
\end{equation*}
has the following properties:
\begin{enumerate}
\item it is isomorphic to $F_\infty$ with a free basis consisting of conjugates of the $f_i^n$, and
\item the translation length of each nontrivial element is at least $t$.
\end{enumerate}
\end{theorem}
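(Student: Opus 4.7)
The plan is to produce a projection complex on which $G$ acts, then invoke Theorem~\ref{thm:spin} to identify the normal closure as a free product of infinite cyclic groups. First, we apply the Bestvina--Bromberg--Fujiwara construction to the family of WPD elements $\{f_1,\ldots,f_m\}$: the vertex set of the projection complex $\P$ is the disjoint union, over $i$, of the $G$-orbit of $\EC_G(f_i)$, viewed as labeling the quasi-axes in $X$ of conjugates of the $f_i$, and the projection functions $d_v$ come from nearest-point projections between these quasi-axes inside $X$. The WPD hypothesis, together with the normal independence of $\{f_1,\ldots,f_m\}$, ensures that the resulting data satisfies the projection complex axioms (including the bounded geodesic image property required by our definition). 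The action of $G$ on $X$ induces a simplicial $G$-action on $\P$ under which the stabilizer of the vertex labeled by $g\EC_G(f_i)g^{-1}$ is $g\EC_G(f_i)g^{-1}$ itself.

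Next, we assign a spinning subgroup to each vertex: if $v$ corresponds to the conjugate $h = gf_ig^{-1}$, set $R_v = \grp{h^n} = g\grp{f_i^n}g^{-1}$. This is well-defined by normal independence (distinct conjugacy classes label distinct axes), it is normal in $\Stab_G(v)$ by the NEC hypothesis, and equivariance $gR_vg^{-1} = R_{gv}$ holds by construction. The key quantitative ingredient is the spinning condition: for $v \neq w$ in $\P$, the projection of $w$ to the quasi-axis labeled by $v$ is a bounded-diameter subset of that axis, and applying $h^n$ shifts this subset along the axis by a distance that grows linearly in $n$ (of order $n$ times the translation length of $h$ on its axis in $X$). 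Consequently $d_v(w, h^n w) \to \infty$ uniformly in $w$ as $n \to \infty$, so we can choose $N_1$ so that $d_v(w, h^n w) \geq L(\P)$ for every $n \geq N_1$, making $\{R_v\}$ an equivariant $L(\P)$--spinning family.

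Applying Theorem~\ref{thm:spin} gives
\[
\normalclosure{f_1^n,\ldots,f_m^n}_G \;=\; \grp{R_v} \;\cong\; \BigFreeProd{v \in \calO} R_v,
\]
with each $R_v \cong \Z$. Since $\grp{R_v}$ has infinite index in $G$ (by the SQ-universality phenomenon for groups with WPD actions on hyperbolic spaces), the orbit set $\calO$ is infinite, so $\grp{R_v} \cong F_\infty$. The fact, noted after Theorem~\ref{thm:spin}, that the isomorphism may be realized by inclusion gives part (1): a free basis consisting of conjugates of the $f_i^n$.

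For part (2), enlarge $N$ so that each $f_i^n$ has translation length at least $t$ on $X$ (this is achieved once $n \geq t/\min_i \ell_i$, where $\ell_i$ is the translation length of $f_i$). Any nontrivial $w \in \grp{R_v}$ is, by the free product decomposition, either conjugate into some factor $R_v \cong \grp{h^n}$, in which case its translation length on $X$ equals that of some nontrivial power of $h^n$ and is therefore at least $t$, or has cyclically reduced length at least $2$. In the latter case a standard ping-pong argument along the quasi-axes inside $X$ shows that $w$ acts loxodromically with translation length growing linearly in $n$, so enlarging $N$ a final time enforces the uniform lower bound $t$. The main obstacle is the quantitative spinning estimate: one must show that projections onto a quasi-axis shift linearly under large powers of the WPD element defining that axis, which ultimately rests on the WPD property together with the standard BBF projection estimates.
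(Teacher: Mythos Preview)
Your argument for part (1) is essentially the paper's: build the projection complex from the $G$--orbit of quasi-axis bundles via Proposition~\ref{prop:many}, take $R_v = \grp{(gf_ig^{-1})^n}$, observe the NEC hypothesis makes $R_v$ normal in $\Stab_G(v)$, and apply Theorem~\ref{thm:spin} once $n$ is large enough for the $L(\P)$--spinning condition.  This is correct.

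The gap is in part (2), specifically the case where $h$ has cyclically reduced syllable length at least $2$.  You appeal to a ``standard ping-pong argument along the quasi-axes inside $X$'' to conclude that the translation length grows linearly in $n$, but this step is where the actual work lies, and it is not standard in the sense you suggest.  The difficulty is that the free product structure you have obtained lives in the projection complex $\P$, not in $X$ itself; to transfer a lower bound on syllable length into a lower bound on $X$--translation length you need a quantitative link between projections in $\P$ and distances in $X$.  The paper supplies this link with two dedicated tools.  First, Lemma~\ref{lem:remember} (extracted from the inequality \eqref{eq:callout} inside the proof of Theorem~\ref{thm:spin}) shows that for each pivot point $w_k$ of $h$ one has $d_{w_k}(v_0,hv_0) \geq \tfrac{1}{3}d_{v_k}(v_0,h_kv_0) \geq \tfrac{n\tau}{3}$, so the projection complex remembers the local spinning at every syllable.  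Second, Proposition~\ref{prop:mm-hyperbolic} is a Masur--Minsky--style distance formula that converts a sum of large projections onto quasi-axis bundles into a lower bound on $d_X(x,hx)$.  Chaining these gives $d_X(x,h^px) \geq p\,|\W(h)|\,t$ for cyclically reduced $h$, hence the translation-length bound.  Neither of these ingredients is a consequence of generic ping-pong; the first uses the internal structure of the windmill argument, and the second is a separate result whose proof the paper defers to \cite{ar:CMMexpos}.  Without them (or genuine substitutes), your sketch does not establish the uniform lower bound $t$ for long words.
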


As before, every WPD element has a power that is NEC and so Theorem~\ref{thm:base} may be applied to any normally independent collection of WPD elements after replacing each by a power.

Besides the mapping class group, another application of Theorem~\ref{thm:base} is to the outer automorphism group of a finitely generated free group $F_n$.  In this case the space $X$ is the free factor complex of $F_n$, and the collection $\{f_{1},\ldots,f_{m}\} \subseteq \Out(F_n)$ is any collection of normally independent fully irreducible outer automorphisms.  See the paper by Bestvina--Feighn~\cite{ar:BF14-1} for the relevant definitions.


\subsection{Outline of the paper}
\label{sec:outline}

We begin in Section~\ref{sec:examples} by describing the basic properties of orbit-overlapping families of subsurfaces.  In particular, in Section~\ref{sec:overlap} we classify all orbit-overlapping families for an arbitrary normal subgroup of the mapping class group.  We also show in Theorem~\ref{thm:examples} that all of the right-angled Artin groups discussed above do indeed appear as normal subgroups of some mapping class group.  Putting these two results together, we obtain a complete classification of right-angled Artin subgroups than can arise from our construction.  We also show that the same right-angled Artin groups appear as normal subgroups of pure braids groups and Torelli groups.

In Section~\ref{sec:back} we give the definition of a projection complex.  After explaining the basic examples, we present a new example based on disconnected subsurfaces that is used for the proof of Theorem~\ref{thm:main}.  We conclude this section with a construction of Bestvina--Bromberg--Fujiwara and Dahmani--Guirardel--Osin that allows us to build a projection complex from a group action on a hyperbolic space.  For the proof of Theorem~\ref{thm:base} we apply their construction to the case of the $\Mod(S)$ action on the curve graph $\C(S)$; the key fact about this action that is needed for the construction is that the action of a pseudo-Anosov mapping class on $\C(S)$ is WPD.  

Section~\ref{sec:windmillsprojection} provides the architecture for the proof of Theorem~\ref{thm:spin}, and gives two successive reductions of Theorem~\ref{thm:spin}.  We begin by defining the windmill associated to the action of a group $G$ on a projection complex $\P$.  A windmill is the direct limit of an increasing union of certain subgraphs $W_i$.  We then define an increasing sequence of free products $F_i$, each equipped with a homomorphism to $G$ and an action on $W_i$.  Proposition~\ref{prop:rephrase} states that Theorem~\ref{thm:spin} holds if each homomorphism $F_i \to G$ is injective; this is the first reduction.  A natural way to prove the injectivity is to show that the induced action on $\P$ is faithful.  In Section~\ref{sec:windmillsprojection} we introduce the notions of pivot points and waypoints as tools for showing that this action is faithful (Proposition~\ref{prop:pivotway}).

In Section~\ref{sec:trees} we use the machinery developed in Section~\ref{sec:windmillsprojection} to prove Theorem~\ref{thm:spin} in the special case where the projection complex is a tree.  The special case is stated as Theorem~\ref{thm:treecase}.  While this theorem is an immediate consequence of the Bass--Serre theory for group actions on trees, the proof serves as a demonstration of the windmill machinery in action, and also serves as a template for the proof of Theorem~\ref{thm:spin} in the following section.  

Section~\ref{sec:proof} contains the proof of Theorem~\ref{thm:spin}, our main technical result about projection complexes.  The proof is analogous to the proof of Theorem~\ref{thm:treecase}, but it is more complicated.  The main new difficulty is that the graphs $W_i$ used to define the windmill are not necessarily convex (in a tree any connected subset is convex).

We next turn toward the proof of Theorem~\ref{thm:main}.  First, in Section~\ref{sec:niwpd-proof} we prove Theorem~\ref{thm:base}.  This theorem gives the special case of Theorem~\ref{thm:main} where $\X = \{S\}$; this special case serves as a sort of base case for Theorem~\ref{thm:main}.  The idea of the proof of Theorem~\ref{thm:base} is to use the aformentioned construction of Bestvina--Bromberg--Fujiwara and Dahmani--Guirardel--Osin to translate the given action by WPD elements to an equivariant spinning action on a projection complex, and then to apply our result about actions on projection complexes, Theorem~\ref{thm:spin}.

In Section~\ref{sec:general-proof} we build on Theorem~\ref{thm:spin} and Theorem~\ref{thm:base} in order to prove Theorem~\ref{thm:main}.  The first step is to build a projection complex from the given $G$--overlapping family of subsurfaces $\X$; the vertices correspond to the subsurfaces of $S$ lying in the $G$--orbits of the elements of $\X$.  The $L$--spinning condition for this action is then ensured by Theorem~\ref{thm:base} and the orbit overlapping condition.  With this in place, we may apply Theorem~\ref{thm:spin} to obtain Theorem~\ref{thm:main}.  We note that Theorem~\ref{thm:spin} is applied twice overall, once in the proof of Theorem~\ref{thm:main} and once in the proof of Theorem~\ref{thm:base} (which is then used in the proof of Theorem~\ref{thm:main}).

In Section~\ref{sec:applications} we prove two applications of Theorem~\ref{thm:main}, namely, Theorems~\ref{thm:cong} and~\ref{thm:evenodd}.  Besides our theorem, both proofs use the Thurston construction for pseudo-Anosov mapping classes, and the theory of elementary closures of pseudo-Anosov mapping classes developed by McCarthy.   

\subsection{Acknowledgments} We would like to thank Mladen Bestvina, Ken Bromberg, Remi Coulon, Fran\c{c}ois Dahmani, Koji Fujiwara, Vincent Guirardel, Marissa Loving, Chris Leininger, Ashot Minasyan, Kasra Rafi, Alessandro Sisto, and an anonymous referee for helpful comments and conversations.  We thank the Mathematical Sciences Research Institute for hosting two of the authors during its 2016 program on Geometric Group Theory, where key parts of this work were completed.

The first author is partially supported by the Simons Foundation Grant No.~316383. The second author is supported by National Science Foundation Grant No.~DMS--1812021.  The third author is supported by National Science Foundation Grants No.~DMS--1057874 and No.~DMS--1811941.

\section{Classification of the RAAGs that arise from Theorem~\ref{thm:main}}\label{sec:examples}

We begin in Section~\ref{sec:overlap} by restricting the right-angled Artin groups that could arise from Theorem~\ref{thm:main} in the case when $G$ is normal in $\Mod(S)$; there are 15 possibilities.  In Section~\ref{sec:examples in mod} we show that all 15 possibilities do indeed arise for $\Mod(S_{g,p})$ with $g,p \geq 6$ and then we show that certain specific right-angled Artin groups arise in the closed case, where $p=0$.  Then in Section~\ref{sec:examples in torelli} we show that all 15 possibilities also arise in the cases of the pure braid group and the Torelli group.

\subsection{Overlapping subsurfaces}\label{sec:overlap}  While any subsurface $X \subseteq S$ is $G$--overlapping when $G = \Stab(X)$, when $G$ is a normal subgroup of $\Mod(S)$ there are topological conditions that must be satisfied by any $G$--overlapping subsurface.  These conditions then impose restrictions on the right-angled Artin groups that could arise from Theorem~\ref{thm:main} when $G$ is normal in $\Mod(S)$.

\begin{proposition}\label{prop:overlapping}
Suppose that $G \trianglelefteq \Mod(S)$, that $X$ is a $G$--overlapping subsurface, that $X_1$ and $X_2$ are arbitrary components of $X$, and that there is a partial pseudo-Anosov element or a power of a Dehn twist in $G$ with support contained in $X_1$.  Then every component of the boundary of $X_1$ either lies in $\partial S$ or is homotopic to a component of the boundary of $X_2$.    
\end{proposition}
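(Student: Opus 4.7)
The plan is to argue by contradiction. Suppose some component $c$ of $\partial X_1$ satisfies $c \not\subseteq \partial S$ and is not homotopic to any component of $\partial X_2$. Let $f \in G$ be the given partial pseudo-Anosov or power of a Dehn twist, with support $Y \subseteq X_1$. Since $G$ is normal in $\Mod(S)$, for any $\phi \in \Mod(S)$ the conjugate $h := \phi f^n \phi^{-1}$ lies in $G$. I will choose $\phi$ and $n$ so that $hX \neq X$ but $hX$ does not overlap $X$, contradicting the $G$-overlapping hypothesis.

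First, I observe that under the hypotheses on $c$ the curve $c$ is essential and non-peripheral in $S \setminus X_2$. Essentiality in $S$ (built into the notion of subsurface) together with disjointness of $c$ from $X_2$ (since $X_1, X_2$ are disjoint components of $X$) gives essentiality in $S \setminus X_2$. The boundary of $S \setminus X_2$ consists of components of $\partial S$ and curves parallel to components of $\partial X_2$; by assumption $c$ is parallel to neither, hence is non-peripheral in $S \setminus X_2$.

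Next, applying the change-of-coordinates principle inside $S \setminus X_2$ to the essential non-peripheral curve $c$, I produce a subsurface $Y'$ in the $\Mod(S)$-orbit of $Y$ with $Y' \subseteq S \setminus X_2$ and such that $Y'$ straddles $c$, i.e.\ $Y' \cap X_1 \neq \emptyset$ and $Y' \not\subseteq X_1$. Take $\phi \in \Mod(S)$ with $\phi(Y) = Y'$ and set $h := \phi f^n \phi^{-1} \in G$, supported on $Y'$. Since $Y' \subseteq S \setminus X_2$, the mapping class $h$ fixes $X_2$, so $hX_2 \simeq X_2$. Since $Y'$ straddles $c \subseteq \partial X_1$, the system $\partial X_1 \cap Y'$ is a nontrivial collection of essential arcs in $Y'$; a sufficiently large iterate of the partial pseudo-Anosov or Dehn twist $h|_{Y'}$ moves this arc system off its isotopy class rel $\partial Y'$, so for large $n$ we have $h(\partial X_1) \not\simeq \partial X_1$, whence $hX_1 \not\simeq X_1$. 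Moreover, $hX_1 \subseteq (X_1 \setminus Y') \cup Y' \subseteq S \setminus X_2$, so $hX_1$ is disjoint from $X_2$ and therefore does not overlap $X_2$. Combined with $hX \neq X$, this contradicts the requirement that every component of $hX$ overlap every component of $X$.

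The main obstacle is the construction of $Y'$: one must verify that the $\Mod(S)$-orbit of $Y$ meets $S \setminus X_2$ in a subsurface crossing $c$. This reduces to a case analysis (annular $Y$ for the Dehn twist case, versus a higher-complexity subsurface for the partial pseudo-Anosov case) plus the non-peripherality of $c$ in $S \setminus X_2$, which is the only place the hypothesis on $c$ gets used.
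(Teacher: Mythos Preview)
Your strategy matches the paper's: argue by contradiction and use normality of $G$ to produce an element of $G$ supported in $S \setminus X_2$ (hence fixing $X_2$) that moves $X_1$, yielding a $G$-translate of $X$ that is neither equal to $X$ nor overlapping it. Where you leave a gap is exactly where you flag it: the change-of-coordinates step producing $Y' = \phi(Y)$ in $S \setminus X_2$ straddling $c$. This is not routine. You need $Y'$ in the $\Mod(S)$-orbit of $Y$, contained in $S\setminus X_2$, with $c \cap Y'$ a nontrivial essential arc system; arranging all three simultaneously is delicate, for instance when $Y=X_1$ or when $Y$ is an annulus whose core happens to be isotopic to $c$ itself, and the promised case analysis is not a formality.

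The paper sidesteps this by choosing the conjugator concretely. It picks a simple closed curve $e$ in $S \setminus X_2$ that crosses $c$ (such a curve exists exactly because $c$ is non-peripheral there, which is your own observation) and, after a minor adjustment, also crosses the support of $f$. Setting $g = T_e^{-k} f^m T_e^{k}$, one verifies via the Dehn-twist intersection formula and the positive translation length of $f$ on the curve complex of its support that $g(c)$ essentially intersects $c$ for large $m,k$. Since $T_e$ is supported off $X_2$, the element $g$ fixes $X_2$, and the contradiction closes as in your outline. The point is that $T_e^{-k}$ serves as an explicit $\phi$ for which the only thing one must control is the single intersection $g(c)\cap c$, not the full topological type of $\phi(Y)$.
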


\begin{proof}

Suppose for the sake of contradiction that some component $d$ of $\partial X_1$ is not homotopic to a component of the boundary of $S$ or $X_2$.  It follows that there is a curve $c$ that intersects $d$ but not $X_2$.  Let $f$ be a partial pseudo-Anosov mapping class or a Dehn twist with support contained in $X_1$.  By modifying $c$ if needed, we may further assume that $c$ intersects the support of $f$.  For any $m$ and $n$, the element $g = T_c^{-n} f^m T_c^{n}$ lies in $G$ as $G$ is normal in $\Mod(S)$.  For all choices of $m$ and $n$, we have that $gX_2=X_2$, as the support of $g$ is contained in the complement of $X_2$.  We will show for large $m$ and $n$ that $gX_1$ is not equal to $X_1$.  This will mean that $gX$ is not equal to $X$ and that $gX$ does not overlap $X$, contrary to the hypothesis that $X$ is $G$--overlapping.

To show that $gX_1$ is not equal to $X_1$ for large enough $m$ and $n$, we will show that  $g(d)$ intersects $d$ for large enough $m$ and $n$.  Applying $T_c^{n}$ to both curves $g(d)$ and $d$, this is equivalent to the statement that $f^mT_c^n(d)$ intersects $T_c^n(d)$.  For large $n$, $T_c^n(d)$ intersects the support of $f$ (this follows from \cite[Proposition 3.4]{primer}), and then for large $m$ it follows that $f^m(T_c^n(d))$ intersects $T_c^n(d)$ (in the case that $f$ is a power of a Dehn twist use the formula \cite[Proposition 3.4]{primer} again and if $f$ is a partial pseudo-Anosov element 
 use the fact due to Masur--Minsky \cite[Proposition~4.6]{ar:MM99} that $f$ has positive translation on the curve complex for the support of $f$ and the fact that the subsurface projection \cite{ar:MM00} of $T_c^n(d)$ to the support of $f$ is well defined).  This completes the proof.
\end{proof}

\begin{corollary}\label{co:axbx}
Suppose that $G \trianglelefteq \Mod(S)$, that $X$ is a $G$--overlapping subsurface of $S$, and that there is a partial pseudo-Anosov element or a power of a Dehn twist in $G$ with support contained in $X$.  Then, $0 \leq a_X \leq 1$ and $0 \leq b_X \leq 2$.    
\end{corollary}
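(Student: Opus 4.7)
The plan is to apply Proposition~\ref{prop:overlapping} to the component $X_1$ of $X$ that contains the support of the given element $f \in G$, and to combine its conclusion with the elementary fact that any simple closed curve in $S$ has only two sides. Proposition~\ref{prop:overlapping} tells us that every boundary component of $X_1$ that is not a component of $\partial S$ is homotopic to a boundary component of every other component of $X$. If $X$ has a single component, both bounds hold trivially, so I assume $X$ has at least two components; this forces $X_1 \subsetneq S$, and since $S$ is connected a short check shows that any proper subsurface of $S$ has at least one essential, non-peripheral boundary component. Fix such a boundary $d$ of $X_1$. Every other component of $X$ then has a boundary homotopic to $d$.

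The main tool is a topological lemma I would prove first: for any essential, non-peripheral simple closed curve $d$ in $S$, (a) at most two pairwise disjoint non-annular subsurfaces of $S$ can have a boundary homotopic to $d$, with the two lying on opposite sides of $d$; and (b) at most one annular subsurface of $S$ has core curve homotopic to $d$. Both parts rest on the observation that, inside a surface with a distinguished boundary circle, two disjoint subsurfaces each having a boundary isotopic to that circle are nested: one $d$-parallel boundary is closer to the distinguished circle than the other, with an annulus between them. In case (a), if both subsurfaces lived on the same side of $d$ the nesting would either force one of them into an annular region (hence be an annulus) or force the two to intersect, a contradiction. Case (b) is immediate, since two annuli with homotopic core curves would be parallel, violating the standing convention that no two components of a subsurface are homotopic.

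Granted the lemma, the bound $b_X \leq 2$ is a short case analysis. If $X_1$ is non-annular, then $X_1$ together with any two further non-annular components of $X$ would yield three disjoint non-annular subsurfaces all having a boundary homotopic to $d$, contradicting part (a). If $X_1$ is annular, I take $d$ to be its core curve; three non-annular components of $X$ would lie in $S \setminus X_1$ and each have a boundary homotopic to $d$, again violating (a) since $X_1$ has only two sides. For $a_X \leq 1$, suppose $A_1$ and $A_2$ are distinct annular components of $X$. Their core curves are each homotopic to $d$, or (when $X_1$ is itself annular and we take $A_1 = X_1$) to a boundary of $X_1$ which coincides with the core of $X_1$; either way, part (b) of the lemma gives a contradiction.

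The step I expect to be the main obstacle is the topological lemma itself, and in particular the subcase in which $X_1$ is an annulus and one has to verify that three hypothetical non-annular components of $X$ cannot coexist alongside $X_1$. The resolution is the nesting argument for parallel curves in a bordered surface, combined with the two-sidedness of $d$; once this is established, the case analysis on whether $X_1$ is annular or non-annular, and the parallel-annulus argument for $a_X$, are routine.
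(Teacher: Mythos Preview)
Your proposal is correct. Both your argument and the paper's begin by applying Proposition~\ref{prop:overlapping} with $X_1$ the component carrying the support, but they diverge in how the bounds are extracted. The paper simply asserts that for two non-annular components $X_1,X_2$ the complement of $X_1\cup X_2$ in $S$ is a union of annuli, which immediately gives $b_X\le 2$, and for $a_X\le 1$ it observes that Proposition~\ref{prop:overlapping} forces any two annular components to have homotopic cores, contradicting the no-parallel-annuli convention. You instead fix a single essential boundary curve $d$ of $X_1$, deduce from Proposition~\ref{prop:overlapping} that every other component has a boundary parallel to $d$, and then invoke a separate counting lemma (at most two disjoint non-annular subsurfaces, and at most one annulus, can have a boundary parallel to a fixed curve). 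Your route is longer but more explicit about the underlying surface topology; the paper's is terser, though as written it uses Proposition~\ref{prop:overlapping} only in the direction $X_1\to X_2$, so its ``complement is annuli'' claim implicitly rests on the same nesting argument you spell out. Your case analysis when $X_1$ is itself annular, and your resolution via the two sides of $d$, are exactly the right way to handle that wrinkle.
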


\begin{proof}

Since a subsurface of $S$ does not have parallel annuli, it follows from Proposition~\ref{prop:overlapping} that $a_X \leq 1$.  Suppose $b_X \geq 2$ and let $X_1$ and $X_2$ be two non-annular components of $X$.  It follows from Proposition~\ref{prop:overlapping} that the complement in $S$ of $X_1 \cup X_2$ is the union of annuli.  Thus $b_X =2$, as desired.
\end{proof}

We say that a subsurface $X$ of $S$ is \emph{compatible} with a subsurface $Y$ if there is a homeomorphism $f$ of $S$ such that $f(X) \subseteq Y$; otherwise, we say that $X$ is \emph{incompatible} with $Y$.  We say that $X$ and $Y$ are \emph{mutually incompatible} if neither is compatible with the other.  Similarly, for a subgroup $G$ of $\Mod(S)$ we say that $X$ is \emph{$G$--compatible} with $Y$ if the above $f$ can be chosen so that its homotopy class lies in $G$; the terms \emph{$G$--incompatible} and \emph{mutually $G$--incompatible} are also defined analogously.

We say that a multicurve $C$ is a \emph{dividing set} if $S - C$ consists of two components and $S - C'$ is connected for any proper submulticurve $C' \subset C$.  In particular, any separating curve is a dividing set.  Given a dividing set $C$, we let $A$ be a closed regular neighborhood of $C$ and we let $L$ and $R$ be the two components of the complement of an open annular neighborhood of $A$.  

By Corollary~\ref{co:axbx} there are five possibilities for the pair $(a_X,b_X)$ for a $G$--overlapping subsurface $X$, where $G$ satisfies the hypotheses of the corollary.  Using  Proposition~\ref{prop:overlapping} we can explicitly describe $X$ in each of the five cases, as follows:

\begin{itemize}[itemsep=1.5ex]

\item[\fbox{(0,1)}] $X$ is a connected subsurface that is incompatible with its complementary region.  

\item[\fbox{(0,2)}] $X = L \cup R$ for a dividing set $C$ with $L$ and $R$ either homeomorphic or mutually incompatible.

\item[\fbox{(1,0)}] $X=A$ for a separating curve $C$ with $L$ and $R$ either homeomorphic or mutually incompatible.

\item[\fbox{(1,1)}] $X = L \cup A$ for a separating curve $C$ with $L$ and $R$ mutually incompatible.

\item[\fbox{(1,2)}] $X = L \cup A \cup R$ for a separating curve $C$ with $L$ and $R$ either homeomorphic or mutually incompatible.

\end{itemize}

Let $G$ be a normal subgroup of $\Mod(S)$ that satisfies the hypotheses of Corollary~\ref{co:axbx}.  By the corollary, the only RAAGs that may result from an application of Theorem~\ref{thm:main} are of the form
\[
\BigFreeProd{(a,b) \in A} \left( \BigFreeProd{\infty} \left((F_{\infty})^{b} \times \Z^{a}\right)\right)
\]
where $A$ is a subset of $\{0,1\} \times \{0,1,2\}$.  Since an infinite free product of infinite cyclic groups is isomorphic to an infinite free product of infinitely generated free groups, we may assume that $A$ does not contain, say, $(0,1)$.  In summary, there are 15 nontrivial isomorphism types of right-angled Artin subgroups of $G$ that may arise from an application of our Theorem~\ref{thm:main}, and they correspond to the 15 nonempty subsets of $\{(1,0),(1,1), (0,2),(1,2)\}$.

\subsection{Mapping class groups}\label{sec:examples in mod} In this section we prove that in certain mapping class groups, all 15 of the right-angled Artin groups from Section~\ref{sec:overlap} arise as normal subgroups.  Then we determine which of these 15 groups arise in the case that the surface is closed.

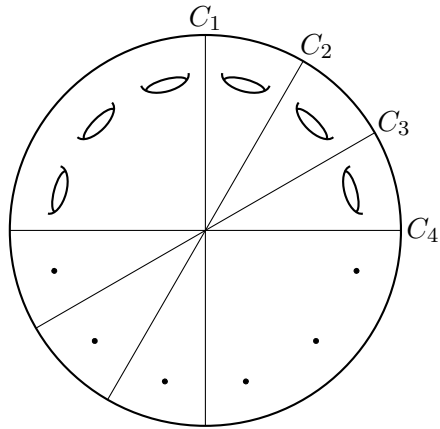
\begin{figure}[ht]
\centering
\begin{tikzpicture}[scale=.65]
\def\r{4}
\pgfmathsetmacro{\s}{0.8*\r}
\draw[thick] (0,0) circle [radius=\r];
\foreach \a in {0,1,2,3}{
	\draw[rotate=\a*30] (-1*\r,0) to[out=0,in=180] (\r,0);
}
\foreach \a in {15,45,75,105,135,165}{	
	\begin{scope}[rotate=\a]
	\begin{scope}[xshift=\s cm]
	\draw[thick] (0,0.5) arc (90:270:0.25cm and 0.5cm);
	\draw[thick] (-0.13,-0.43) arc (-90:90:0.13cm and 0.43cm);
	\end{scope}
	\begin{scope}[xshift=-1*\s cm]
	\filldraw (0,0) circle (0.05);
	\end{scope}
	\end{scope}
}
\node (c1) at (0,4.35) {$C_1$};
\node (c2) at (2.28,3.7789) {$C_2$};
\node (c3) at (3.8589,2.2) {$C_3$};
\node (c4) at (4.45,0) {$C_4$};
\end{tikzpicture}
\caption{The separating curves in Theorem~\ref{thm:examples}.}\label{fig:examples}
\end{figure}

\begin{theorem}\label{thm:examples}
Let $g,p \geq 6$.  For each of the 15 nonempty subsets $A$ of $\{(1,0),(1,1), (0,2),(1,2)\}$ there is a normal subgroup of $\Mod(S_{g,p})$ isomorphic to 
\[
\BigFreeProd{(a,b) \in A} \left( \BigFreeProd{\infty} \left((F_{\infty})^{b} \times \Z^{a}\right)\right).
\]
\end{theorem}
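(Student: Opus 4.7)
The plan is to apply Theorem~\ref{thm:main} with $G = \Mod(S_{g,p})$. For each nonempty subset $A \subseteq \{(1,0),(1,1),(0,2),(1,2)\}$ I will build a $G$-overlapping family $\X_A$ containing exactly one subsurface $X_{(a,b)}$ of type $(a,b)$ for each $(a,b) \in A$, together with a finite, $G$-independent family $\F \subseteq G$ of mapping classes carried by $\X_A$ and NEC in $G$. Theorem~\ref{thm:main} combined with the identification $R_Y \cong F_\infty^{b_Y} \times \Z^{a_Y}$ will then yield
\[
\normalclosure{\F^{(n)}}_G \;\cong\; \BigFreeProd{(a,b)\in A} \Bigl(\, \FreeProdInf \bigl(F_\infty^{b} \times \Z^{a}\bigr) \Bigr),
\]
which is the target isomorphism.

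The building blocks of $\X_A$ will come from four separating curves $C_1, C_2, C_3, C_4$ in $S_{g,p}$ configured as in Figure~\ref{fig:examples}. Exploiting the hypothesis $g,p \geq 6$, I distribute the six handles and six punctures around these curves so as to achieve: (i) the two complementary sides of $C_2$ are mutually incompatible (needed to realize type $(1,1)$), while the sides of each of $C_1$, $C_3$, $C_4$ are either homeomorphic or mutually incompatible (the topological possibilities classified in Section~\ref{sec:overlap}); (ii) the four associated subsurfaces have pairwise distinct values of the pair $(a_X, b_X)$, and so lie in pairwise distinct $G$-orbits; and (iii) no $G$-translate of $C_i$ is disjoint from any $G$-translate of $C_j$ for $i \neq j$. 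Then I set $X_{(1,0)}$ to be a closed annular neighborhood of $C_1$; $X_{(1,1)}$ to be the union of a closed annular neighborhood of $C_2$ with one fixed side of $C_2$; $X_{(0,2)}$ to be the union of the two sides of $C_3$; and $X_{(1,2)}$ to be the union of a closed annular neighborhood of $C_4$ with both of its sides.

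For each $A$ I set $\X_A = \{X_{(a,b)} : (a,b) \in A\}$ and check the hypotheses of Theorem~\ref{thm:main}. Proposition~\ref{prop:overlapping} together with the case analysis immediately after Corollary~\ref{co:axbx} implies that each $X_{(a,b)}$ is individually $G$-overlapping; property (iii) guarantees that any $G$-translate of $X_{(a,b)}$ overlaps componentwise with any $G$-translate of $X_{(a',b')}$ for distinct $(a,b),(a',b') \in A$, so $\X_A$ is a $G$-overlapping family. For $\F$, on each component of each $X_{(a,b)}$ I place one generator: a power of a Dehn twist about an annular component, or a partial pseudo-Anosov mapping class supported on a non-annular component. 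Raising every generator to the uniform exponent $d(S)$ recorded in the NEC discussion in Section~\ref{sec:state} makes each element NEC in $G$; varying the stretch factors of the partial pseudo-Anosov pieces arranges $G$-independence of $\F$. This fulfills all the hypotheses of Theorem~\ref{thm:main}, and the conclusion—together with the observation that each $G$-orbit on $\X_A$ splits into infinitely many $\normalclosure{\F^{(n)}}_G$-orbits, producing infinitely many conjugate copies of each $R_{X_{(a,b)}}$—delivers the displayed free product.

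The principal obstacle lies in arranging property (iii): pairwise orbit-overlap between translates of four separating curves of four distinct topological types. Since separating curves in $S_{g,p}$ are classified by the topological type of one complementary side, (iii) reduces to showing that no separating curve of the type of $C_i$ embeds into either complementary component of any separating curve of the type of $C_j$, for each pair $i \neq j$. This ``fitting-inside'' obstruction is a numerical condition on the partitions of the six handles and six punctures between the two sides of each $C_i$, and the bound $g, p \geq 6$ is precisely what allows all four partitions to be arranged so that no two are comparable in the nesting order.
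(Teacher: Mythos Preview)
Your approach is essentially the same as the paper's: choose four separating curves $C_1,\ldots,C_4$ of four distinct topological types, form the four subsurfaces from their annular neighborhoods and complementary sides, verify mutual incompatibility of those sides, and apply Theorem~\ref{thm:main} to each subset of $\X$. The paper carries this out concretely via Figure~\ref{fig:examples}: the four curves are arranged so that the eight sides $L_i,R_i$ have (genus, puncture) types lying on the antidiagonal $a+b=6$ in $S_{6,6}$, hence are pairwise incomparable in the product order---which is exactly your ``nesting order'' criterion for property~(iii). Where you sketch the numerics abstractly, the paper simply exhibits the curves and reads off the incompatibility.

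There is one slip in your logic. You invoke Proposition~\ref{prop:overlapping} and the case analysis following Corollary~\ref{co:axbx} to conclude that each $X_{(a,b)}$ is individually $G$-overlapping, but those results point the other way: they derive \emph{necessary} topological conditions from the $G$-overlapping hypothesis, not sufficient ones. The sufficiency you actually need is a short direct argument from your property~(i): if the two sides of a separating curve $C$ are mutually incompatible (or homeomorphic), then any $\Mod(S)$-translate of $C$ distinct from $C$ must cross $C$, since a disjoint translate would lie in one side and cut off a copy of one side properly inside the other, contradicting incompatibility. Because every component of each $X_{(a,b)}$ has boundary equal to the single curve $C_i$, this crossing forces component-wise overlap. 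The paper handles this step by directly asserting that pairwise mutual incompatibility of the $L_i,R_i$ implies $\X$ is $\Mod(S_{g,p})$-overlapping, without appealing to Proposition~\ref{prop:overlapping}.
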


\begin{proof}

Let $C_1, C_2, C_3$ and $C_4$ be the four curves in $S_{g,p}$ as indicated in Figure~\ref{fig:examples} for the case of $S_{6,6}$; for $g > 6$ and $p > 6$ we may simply add more handles to the region with three handles and more marked points to the region with three marked points.  

Let $A_{i}$ be a closed annular neighborhood of $C_i$ and let $L_{i}$ and $R_{i}$ be the components of the complement of an open annular neighborhood of $A_{i}$ for $i=1,\ldots,4$.  By construction, any pair of subsurfaces in $\{L_{i} \mid i=1,\ldots,4\} \cup \{R_{i} \mid i = 1,\ldots 4\}$ is mutually incompatible with the possible exception of $L_{1}$ and $R_{1}$ (which are homeomorphic when $g=p=6$).  From this it follows that
\[ \X = \{A_{1}, L_{2} \cup A_{2}, L_{3} \cup R_{3}, L_{4} \cup A_{4} \cup R_{4} \} \]
is $\Mod(S_{g,p})$--overlapping. 

Applying Theorem~\ref{thm:main} to all possible subsets of $\X$ (and choosing $\F$ appropriately) we obtain the desired conclusion.

More specifically, each element $(a,b)$ of $\{(1,0),(1,1), (0,2),(1,2)\}$ corresponds to an element $X_{(a,b)} \in \X$ as follows:
\[
\begin{array}{rl}
(1,0)     &\leadsto \ A_1 \\
(1,1) &\leadsto \ L_2 \cup A_2 \\
(0,2) &\leadsto \ L_3 \cup R_3 \\ 
(1,2) &\leadsto\  L_4 \cup A_4 \cup R_4.
\end{array}
\]
We choose suitable mapping classes for each component of each such  $X_{(a,b)}$.  In each case, we may use the standard fact that a surface $S_{g',p'}$ containing an essential curve has pseudo-Anosov elements in its mapping class group; in particular each $L_i$ and $R_i$ that appears in $\X$ is the support of some partial pseudo-Anosov element.
\end{proof}

\begin{figure}[ht]
\centering
\begin{tikzpicture}[scale=.65]
\draw[thick,blue] (0,1.62) to[in=85,out=275] (0,-1.62);
\draw[thick,red] (-7,0.03) to[out=15,in=165] (-5.9,0.03) (-5.1,0.03) to[out=15,in=180] (-4.6,0.1) (-2.15,0.1) to[out=0,in=165] (-1.65,0.03);
\draw[thick,red] (-0.85,0.03) to[out=12,in=168] (0.85,0.03);
\draw[thick,red] (7,0.03) to[in=15,out=165] (5.9,0.03) (5.1,0.03) to[in=180,out=165] (4.6,0.1) (2.15,0.1) to[in=15,out=180] (1.65,0.03);
\draw[thick] (4,1.5) .. controls (8,1.25) and (8,-1.25) .. (4,-1.5);
\draw[thick] (-4,1.5) .. controls (-8,1.25) and (-8,-1.25) .. (-4,-1.5);
\draw[thick] (-4,1.5) .. controls (0,1.65) .. (4,1.5);
\draw[thick] (-4,-1.5) .. controls (0,-1.65) .. (4,-1.5);
\foreach \a in {-3.4,3.4} {
	\begin{scope}[xshift=\a cm]
	\draw[thick] (-2.7,-0.25) .. controls (-2,-1) and (-0.7,0) .. (0,-0.8);
	\draw[thick] (2.7,-0.25) .. controls (2,-1) and (0.7,0) .. (0,-0.8);
	\end{scope}
	\node (g) at (\a,-1.15) {$g/2$};
}
\foreach \a in {-5.5,-1.25,1.25,5.5}{
	\begin{scope}[xshift=\a cm,yshift=0.15cm,rotate=90]
	\begin{scope}
	\clip (-0.13,1) rectangle (-1,-1);
	\draw[thick,fill=white] (0,0.5) arc (90:270:0.25cm and 0.5cm);	
	\end{scope}
	\draw[thick,fill=white] (-0.13,-0.43) arc (-90:90:0.13cm and 0.43cm);	
	\draw[thick] (0,0.5) arc (90:270:0.25cm and 0.5cm);
	\end{scope}
}
\foreach \a in {2.9,3.4,3.9}{	
	\filldraw (\a,0.05cm) circle (0.025);
	\filldraw (-1*\a,0.05cm) circle (0.025);
}
\node[blue] (c1) at (0,2) {$C_1$};
\node[red] (c2) at (-7.5,0) {$C_2$};
\node (even) at (-10,0) {$g$ even};
\end{tikzpicture}

\vspace*{1cm}

\begin{tikzpicture}[scale=.65]
\draw[thick,blue] (1,1.62) to[in=85,out=275] (1,-1.62);
\draw[thick,red] (-7,0.03) to[out=15,in=165] (-5.9,0.03) (-5.1,0.03) to[out=15,in=180] (-4.6,0.1) (-1.15,0.1) to[out=0,in=165] (-.65,0.03);
\draw[thick,red] (0.15,0.03) to[out=12,in=168] (1.85,0.03);
\draw[thick,red] (7,0.03) to[in=15,out=165] (5.9,0.03) (5.1,0.03) to[in=180,out=165] (4.6,0.1) (3.15,0.1) to[in=15,out=180] (2.65,0.03);
\draw[thick] (4,1.5) .. controls (8,1.25) and (8,-1.25) .. (4,-1.5);
\draw[thick] (-4,1.5) .. controls (-8,1.25) and (-8,-1.25) .. (-4,-1.5);
\draw[thick] (-4,1.5) .. controls (0,1.65) .. (4,1.5);
\draw[thick] (-4,-1.5) .. controls (0,-1.65) .. (4,-1.5);
\begin{scope}[xshift=-3 cm]
\draw[thick] (-3.2,-0.25) .. controls (-2.5,-1) and (-0.7,0) .. (0,-0.8);
\draw[thick] (3.2,-0.25) .. controls (2.5,-1) and (0.7,0) .. (0,-0.8);
\end{scope}
\begin{scope}[xshift=3.9 cm]
\draw[thick] (-2.2,-0.25) .. controls (-1.4,-1) and (-0.7,0) .. (0,-0.8);
\draw[thick] (2.2,-0.25) .. controls (1.4,-1) and (0.7,0) .. (0,-0.8);
\end{scope}
\node (gL) at (-3,-1.12) {$(g+1)/2$};
\node (gR) at (3.9,-1.12) {$(g-1)/2$};
\foreach \a in {-5.5,-0.25,2.25,5.5}{
	\begin{scope}[xshift=\a cm,yshift=0.15cm,rotate=90]
	\begin{scope}
	\clip (-0.13,1) rectangle (-1,-1);
	\draw[thick,fill=white] (0,0.5) arc (90:270:0.25cm and 0.5cm);	
	\end{scope}
	\draw[thick,fill=white] (-0.13,-0.43) arc (-90:90:0.13cm and 0.43cm);	
	\draw[thick] (0,0.5) arc (90:270:0.25cm and 0.5cm);
	\end{scope}
}
\foreach \a in {2.5,3,3.5}{	
	\filldraw (-1*\a,0.05cm) circle (0.025);
}
\foreach \a in {3.5,3.9,4.3}{	
	\filldraw (\a,0.05cm) circle (0.025);
}
\node[blue] (c1) at (1,2) {$C_1$};
\node[red] (c2) at (-7.5,0) {$C_2$};
\node (odd) at (-10,0) {$g$ odd};
\end{tikzpicture}
\caption{The dividing sets for Theorem~\ref{thm:examples2}.}\label{fig:closed examples}
\end{figure}
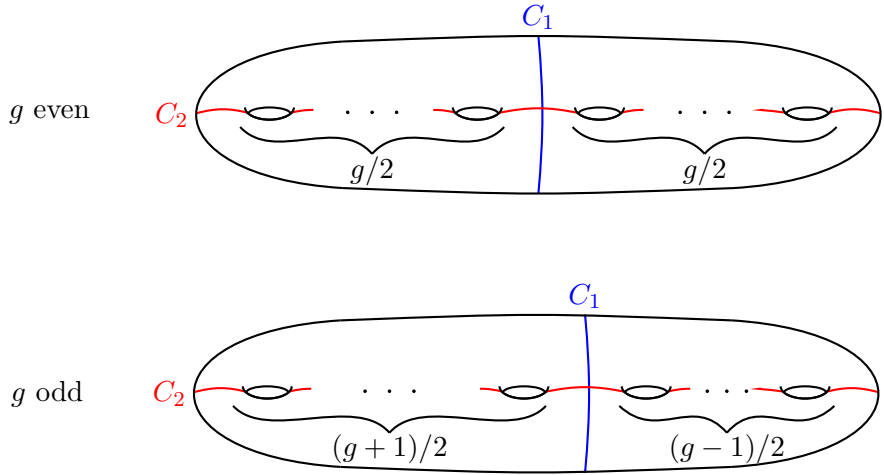

\begin{theorem}\label{thm:examples2}
Let $g \geq 2$.  
\begin{enumerate}
\item If $g$ is even, there are normal subgroups of $\Mod(S_g)$ isomorphic to 
\begin{align*}
F_\infty, \quad \BigFreeProd{\infty} (F_\infty \times F_\infty \times \Z) , \quad \BigFreeProd{\infty} (F_\infty \times F_\infty),
\end{align*}
and also the free product of the third with either of the first two if $g \geq 4$.
\item If $g$ is odd, there are normal subgroups of $\Mod(S_g)$ isomorphic to 
\[
F_\infty, \quad \FreeProdInf (F_\infty \times F_\infty),
\]
and also the free product of these.
\end{enumerate}
Moreover, these are the only normal right-angled Artin subgroups of $\Mod(S_g)$ that arise from the construction of Theorem~\ref{thm:main}.
\end{theorem}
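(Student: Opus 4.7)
The plan is to prove the theorem in two parts: uniqueness of the building blocks that can arise from Theorem~\ref{thm:main}, and existence of explicit $\Mod(S_g)$--overlapping families realizing each listed group.

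For the uniqueness direction, I will rule out the disallowed cases among the 15 possibilities classified in Section~\ref{sec:overlap}. The key topological fact is that in a closed surface, a single separating curve $C \subset S_g$ divides $S_g$ into pieces $L \cong S_{g_1,1}$ and $R \cong S_{g_2,1}$ with $g_1 + g_2 = g$. When $g_1 < g_2$, the change-of-coordinates principle produces a homeomorphism of $S_g$ taking $L$ to a subsurface of $R$ of type $S_{g_1,1}$, using the fact that the pair $(S_g, L)$ is topologically determined by $g_1$. So $L$ is compatible with $R$, and hence $L$ and $R$ are never mutually incompatible when $\partial S_g = \emptyset$. This rules out case $(1,1)$ of Section~\ref{sec:overlap} for all $g$ (which explicitly requires mutual incompatibility), and rules out cases $(1,0)$ and $(1,2)$ for odd $g$ (where $g$ odd forces $g_1 \neq g_2$, so neither $L \cong R$ nor mutual incompatibility is possible). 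Combined with the observation that $(1,0)$ and $(0,1)$ both contribute $F_\infty$, the surviving building blocks match the lists in the theorem for even and odd $g$ respectively.

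For existence, when $g$ is even I will use genus $g/2$ separating curves (for which $L \cong R$) and take $\X$ to be $\{A\}$, $\{L \cup R\}$, or $\{L \cup A \cup R\}$ to realize cases $(1,0)$, $(0,2)$, $(1,2)$ respectively. When $g = 2k+1$ is odd, I will realize case $(0,1)$ by $X = S_{k+1,1}$ (whose genus $k$ complement cannot contain an embedded copy of $X$, since $k+1 > k$), and realize case $(0,2)$ via a dividing set consisting of two disjoint homologous non-separating curves cutting $S_g$ into two copies of $S_{k,2}$. The allowed free product combinations will be realized following the template of Theorem~\ref{thm:examples}, building $\X$ from several subsurfaces of mutually distinct topological types so that no element of $\Mod(S_g)$ can conjugate one family member to be disjoint from another.

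The main obstacle lies in the pairwise overlapping condition across $\X$, which dictates precisely which free products are and are not realizable. In particular, the reason $(1,0)$ and $(1,2)$ cannot coexist in a single $\Mod(S_g)$--overlapping family is that both require genus $g/2$ separating curves, and because $\Mod(S_g)$ acts transitively on such curves, we can always conjugate the curve from one family element onto the curve of the other, producing configurations with disjoint representatives that violate overlapping. By contrast, case $(0,2)$ admits a realization by a dividing multicurve lying in a distinct $\Mod(S_g)$--orbit from genus $g/2$ separating curves, which is why $(0,2)$ can be combined with either $(1,0)$ or $(1,2)$. Verifying these overlapping statements rigorously, and ensuring that each constructed family is indeed $\Mod(S_g)$--overlapping with the correct $\F$ structure, parallels the analysis carried out in the proof of Theorem~\ref{thm:examples} and relies on standard change-of-coordinates arguments adapted to the more restrictive closed-surface setting.
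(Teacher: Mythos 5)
Your proposal follows essentially the same route as the paper's proof: the same explicit configurations (a genus $g/2$ separating curve for the even case, a separating curve with unequal-genus sides for the odd case, and a homologous multicurve dividing set for the $(0,2)$ type), and the same case analysis for the ``only these'' claim --- including the correct observations that type $(1,1)$ never occurs on a closed surface because the smaller-genus side of a separating curve always embeds in the larger, and that types $(1,0)$ and $(1,2)$ cannot coexist in one overlapping family because all genus $g/2$ separating curves lie in a single $\Mod(S_g)$--orbit. Your uniqueness discussion is in fact more explicit than the paper's, which disposes of the remaining cases with ``the reasoning is similar.''

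The one point you never engage with is the hypothesis ``$g \geq 4$'' governing when the free-product combinations exist in the even case. Your combination argument rests on realizing $(0,2)$ by a dividing multicurve lying in a different $\Mod(S_g)$--orbit from the genus $g/2$ separating curves, but for even $g$ you never exhibit such a multicurve, and its existence is exactly what fails for $g=2$: a $k$--component dividing set cuts $S_g$ into two pieces with $k$ boundary components each and total genus $g-k+1$, so for $S_2$ the only options besides the single genus--one separating curve are a parallel pair (one side an annulus) or three curves cutting into two pairs of pants, neither of which can carry a partial pseudo-Anosov element. You should exhibit, say, a three-component dividing set with two homeomorphic genus $(g-2)/2$ sides for even $g\geq 4$, and note that this is precisely where $g\geq 4$ enters. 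A smaller caveat: your reason for excluding the free product of the first two listed groups only rules out the $(1,0)$ realization of the $F_\infty$ factor; since $F_\infty$ also arises from connected type--$(0,1)$ subsurfaces, a complete argument must also show that no $(0,1)$ configuration can be paired with a $(1,2)$ configuration (the paper is equally silent on this).
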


\begin{proof}

We begin with the first two statements.  For the two cases ($g$ even and $g$ odd), let $C_1$ and $C_2$ be the multicurves indicated in the top and bottom of Figure~\ref{fig:closed examples}, respectively.  We define $A_i$, $L_i$, and $R_i$ as in the proof of Theorem~\ref{thm:examples}.  

The desired subgroups are constructed in the same way as in the proof of Theorem~\ref{thm:examples}.  For instance, when $g$ is even we may construct the free product of $\FreeProdInf (F_\infty \times F_\infty \times \Z)$ with $\FreeProdInf (F_\infty \times F_\infty)$ by taking 
\[ \X = \{A_{1} \cup L_{1} \cup R_{1}, L_{2} \cup R_{2} \}, \]
and we may construct the free product of $F_\infty$ with $\FreeProdInf (F_\infty \times F_\infty)$ by taking 
\[ \X = \{A_{1} , L_{2} \cup R_{2} \}. \]
Similarly, when $g$ is odd we may construct, for example, the free product $F_\infty$ with $\FreeProdInf (F_\infty \times F_\infty)$ by taking 
\[
\X = \{L_{1}, L_{2} \cup R_{2} \}.
\]
The other right-angled Artin groups from the statement of the theorem are obtained by removing elements (or components of elements) of the given sets $\X$.  

The last statement follows from a case-by-case analysis.  For example, the group $\FreeProdInf (F_\infty \times \Z)$ does not arise because there does not exist a separating curve in $S_g$ with the property that the two complementary regions are mutually incompatible.  For the other cases, the reasoning is similar.  
\end{proof}

\subsection{Pure braid groups and Torelli groups}\label{sec:examples in torelli}  The idea behind the examples in Section~\ref{sec:examples in mod} can be extended to create examples in both the pure braid group $\PB_n$ and the Torelli group $\I(S_g)$.

For the pure braid group, we require some setup.  A subsurface $X$ of $\D_n$ induces a partition of the marked points of $\D_n$: two marked points are in the same subset if and only if there is a path between them that does not cross the boundary of $X$.  We say that two partitions of a set are \emph{mutually compatible} if each component of one partition either contains or is contained in a component of the other partition.  It is a fact that two subsurfaces of $\D_n$ are mutually $\PB_n$--incompatible if the corresponding partitions of the marked points are mutually incompatible.

\begin{figure}[ht]
\centering
\begin{tikzpicture}[scale=.65]
\def\r{4.5}
\pgfmathsetmacro{\s}{0.8*\r}
\draw[thick] (0,0) circle [radius=\r];
\foreach \a in {0,1,2,3,4}{
	\filldraw[rotate=\a*72] (0.7*\r,0) circle (0.05);
}
\draw[thick] (0.15,0) .. controls (-0.25,6) and (4.1,3.25) .. (4.2,0);
\draw[thick] (0.15,0) .. controls (-0.25,-6) and (4.1,-3.25) .. (4.2,0);
\draw[thick,rotate=72,blue] (0.15,0) .. controls (-0.25,6) and (4.1,3.25) .. (4.2,0);
\draw[thick,rotate=72,blue] (0.15,0) .. controls (-0.25,-6) and (4.1,-3.25) .. (4.2,0);
\draw[thick,rotate=144,red] (0.15,0) .. controls (-0.25,6) and (4.1,3.25) .. (4.2,0);
\draw[thick,rotate=144,red] (0.15,0) .. controls (-0.25,-6) and (4.1,-3.25) .. (4.2,0);
\draw[thick,rotate=216,black!50!green] (0.15,0) .. controls (-0.25,6) and (4.1,3.25) .. (4.2,0);
\draw[thick,rotate=216,black!50!green] (0.15,0) .. controls (-0.25,-6) and (4.1,-3.25) .. (4.2,0);
\node (c1) at (5,0) {$C_1$};
\node[blue] (c2) at (1.5451,4.7553) {$C_2$};
\node[red] (c3) at (-4.0451,2.9389) {$C_3$};
\node[black!50!green] (c4) at (-4.0451,-2.9389) {$C_4$};
\end{tikzpicture}
\caption{The curves in the proof of Theorem~\ref{thm:examples pbn}.}\label{fig:examples pbn}
\end{figure}
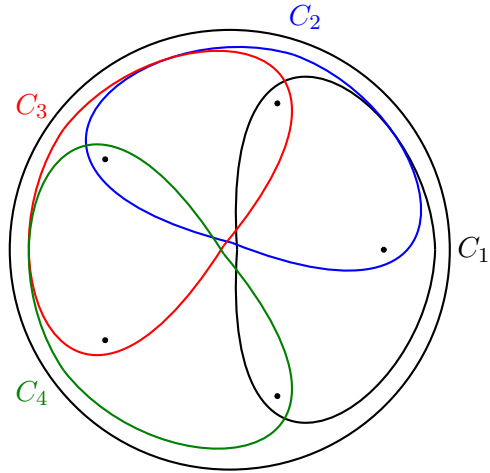

\begin{theorem}\label{thm:examples pbn}
Suppose $n \geq 5$.  For each of the 15 nonempty subsets $A$ of $\{(1,0),(1,1), (0,2),(1,2)\}$ there is a normal subgroup of $\PB_n$ isomorphic to 
\[
\BigFreeProd{(a,b) \in A} \left( \FreeProdInf \left((F_{\infty})^{b} \times \Z^{a}\right)\right).
\]
\end{theorem}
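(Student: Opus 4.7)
The plan is to mirror the proof of Theorem~\ref{thm:examples}, exploiting the partition-based criterion for $\PB_n$-incompatibility stated just before the theorem. Following Figure~\ref{fig:examples pbn}, I take four simple closed curves $C_1, C_2, C_3, C_4$ in $\D_n$; for $n = 5$ these are the curves shown in the figure, related by fifth-turn rotation, each enclosing three of the five marked points, and for $n > 5$ we add extra marked points in a single region chosen to preserve the partition pattern described below. As before, let $A_i$ be a closed annular neighborhood of $C_i$, and let $L_i, R_i$ be the two components of the complement of an open annular neighborhood of $A_i$.

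The key step is to verify that
\[ \X = \{A_1,\ L_2 \cup A_2,\ L_3 \cup R_3,\ L_4 \cup A_4 \cup R_4\} \]
is a $\PB_n$-overlapping family. Each $C_i$ partitions the marked points of $\D_n$ into a subset $P_i$ and its complement. In the configuration above, for any $i \neq j$ the three sets $P_i \cap P_j$, $P_i \setminus P_j$, and $P_j \setminus P_i$ are all nonempty, so the induced partitions are pairwise mutually incompatible; by the partition criterion, the four subsurfaces in $\X$ are pairwise mutually $\PB_n$-incompatible and hence lie in distinct $\PB_n$-orbits. Since $\PB_n$ fixes every marked point, the isotopy class of each $C_i$ (and of each subsurface in $\X$) is fixed by $\PB_n$, so each relevant $\PB_n$-orbit is a singleton; because distinct $P_i$'s are neither nested nor disjoint, the curves $C_i$ and $C_j$ essentially intersect, so any two members of $\X$ overlap as subsurfaces. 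Finally, for each $i$, the two sides $L_i, R_i$ contain distinct marked points and are therefore mutually $\PB_n$-incompatible, matching the requirements after Proposition~\ref{prop:overlapping}: the four elements of $\X$ realize the types $(1,0)$, $(1,1)$, $(0,2)$, $(1,2)$ respectively.

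To produce each of the $15$ right-angled Artin groups, I then apply Theorem~\ref{thm:main} to $\PB_n$ with $\X$ restricted to the subset of $\{A_1, L_2 \cup A_2, L_3 \cup R_3, L_4 \cup A_4 \cup R_4\}$ corresponding to the desired subset of $\{(1,0),(1,1),(0,2),(1,2)\}$. The family $\F$ consists of suitable partial pseudo-Anosov elements of $\PB_n$ supported on each non-annular component, together with suitable powers of Dehn twists about each $C_i$; such partial pseudo-Anosov elements exist because each non-annular component is a multiply-punctured disk with enough marked points to admit pseudo-Anosov elements in its pure mapping class group. The main obstacle is the combinatorial verification that the specific configuration of Figure~\ref{fig:examples pbn} really does yield pairwise mutually incompatible partitions with the necessary overlap pattern; for the rotationally symmetric case with $n = 5$, this reduces to a short check on four three-element subsets of a five-element set, and the extension to $n > 5$ follows by placing extra marked points so that all four partitions remain pairwise mutually incompatible.
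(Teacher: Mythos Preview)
Your overall strategy mirrors the paper's proof, and the choice of $\X$ and the reduction to the partition criterion are right.  However, there is a genuine error in the verification that $\X$ is $\PB_n$--overlapping.  You write that ``since $\PB_n$ fixes every marked point, the isotopy class of each $C_i$ \dots\ is fixed by $\PB_n$, so each relevant $\PB_n$--orbit is a singleton.''  This is false: the pure braid group fixes each marked point \emph{as a point}, but it acts with infinite orbits on isotopy classes of curves (e.g.\ the square of any standard braid generator moves most curves).  Consequently, your argument---which only checks that the four specific curves $C_i$ pairwise intersect---does not establish the $G$--overlapping condition, which requires overlap for \emph{every} pair of $\PB_n$--translates, and likewise for the self-overlapping condition $hX = X$ or $hX$ overlaps $X$.

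The paper's route around this is exactly the partition invariant you invoke, but used correctly: although $\PB_n$ moves curves, it \emph{preserves the induced partition} of the marked points, so every $\PB_n$--translate of a component of $\X$ with subscript $i$ still induces the partition $\{P_i, P_i^c\}$.  The paper therefore checks that the individual components $A_1, A_2, A_4, L_2, L_3, L_4, R_3, R_4$ (with distinct subscripts) induce pairwise incompatible partitions, deduces via the stated fact that they are pairwise mutually $\PB_n$--incompatible, and concludes that $\X$ is $\PB_n$--overlapping.  The implicit geometric point is that two connected subsurfaces which fail to overlap are, up to isotopy, nested or disjoint---hence induce compatible partitions---so incompatible partitions force overlap for \emph{any} pair of translates, not just the original representatives.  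Your proof can be repaired by replacing the singleton-orbit claim with this observation.
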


\begin{proof}

Let $C_1$, $C_2$, $C_3$, and $C_4$ be the four curves in $\D_n$ indicated in Figure~\ref{fig:examples pbn}.   The figure shows the case $n=5$; for $n > 5$ we may simply add marked points in the exterior of all four curves.  We define $A_i$, $L_i$, and $R_i$ as in the proof of Theorem~\ref{thm:examples}. 

Consider the following set of subsurfaces of $\D_n$:
\[
\{A_1,A_2,A_4\} \cup \{L_2,L_3,L_4\} \cup \{R_3, R_4\}.
\]
Any two elements of this set with distinct subscripts induce incompatible partitions of the set of marked points of $\D_n$.  As above, it follows that every pair of subsurfaces in the set
\[
\X = \{A_{1}, L_{2} \cup A_{2}, L_{3} \cup R_{3}, L_{4} \cup A_{4} \cup R_{4} \}
\]
is mutually $\PB_n$--incompatible.  From this it follows that $\X$ is $\PB_n$--overlapping.  Applying Theorem~\ref{thm:main} to all possible subsets of $\X$ (and choosing $\F$ appropriately) we obtain the desired conclusion (since each $L_i$ and $R_i$ that appears in $X$ is either a disk with 3 or more marked points or an annulus with two or more marked points, they all can be realized as the support of a partial pseudo-Anosov element of $\PB_n$).
\end{proof}

The situation for the Torelli groups is similar to that for the pure braid groups.  Let $X$ be a subsurface of $S_g$ that is either a separating annulus or a subsurface with connected boundary.  In either case $X$ induces a direct sum decomposition of $H_1(S_g;\mathbb{Q})$ into two symplectic subspaces.  In the annulus case, the two subspaces are the elements of $H_1(S_g;\mathbb{Q})$ with representatives on one side or the other of the annulus.  In the other case, the two subspaces are the elements of $H_1(S_g;\mathbb{Q})$ either represented in $X$ or its complement.  Similar to the case of the pure braid group, we say that two direct sum decompositions of $H_1(S_g;\mathbb{Q})$ are \emph{compatible} if each summand of one decomposition either contains or is contained in a summand of the other decomposition.

Below, the \emph{genus} of a separating curve $C$ in $S_g$ is the minimum genus of a complementary component.

\begin{figure}[ht]
\centering
\begin{tikzpicture}[scale=0.65]
\def\r{4.5}
\pgfmathsetmacro{\s}{0.3*\r}
\draw[thick] (0,0) circle [radius=\r];
\foreach \a in {0,...,3}{	
    \draw[thick,mc\a,rotate=90*\a] (0.05,1.35) .. controls (2,4) and (3.9,3) .. (4,0);
    \draw[thick,mc\a,rotate=90*\a] (-0.1,1.1) .. controls (-4.25,-4.5) and (4.1,-4.25) .. (4,0);
    \begin{scope}[rotate=\a*90]
    \begin{scope}[xshift=\s cm,rotate=-30]
    \draw[thick] (0,0.5) arc (90:270:0.25cm and 0.5cm);
    \draw[thick] (-0.13,-0.43) arc (-90:90:0.13cm and 0.43cm);
    \end{scope}
    \end{scope}
}
\begin{scope}[rotate=-10]\node[black] (c1) at (5,0) {$C_1$};\end{scope}
\begin{scope}[rotate=80]\node[blue] (c2) at (5,0) {$C_2$};\end{scope}
\begin{scope}[rotate=170]\node[red] (c3) at (5,0) {$C_3$};\end{scope}
\begin{scope}[rotate=260]\node[black!50!green] (c4) at (5,0) {$C_4$};\end{scope}
\end{tikzpicture}
\caption{The curves in the proof of Theorem~\ref{thm:examples torelli}.  Each $C_i$ is symmetric about the plane of the page, and only half of each curve is shown.}\label{fig:torelli examples}
\end{figure}
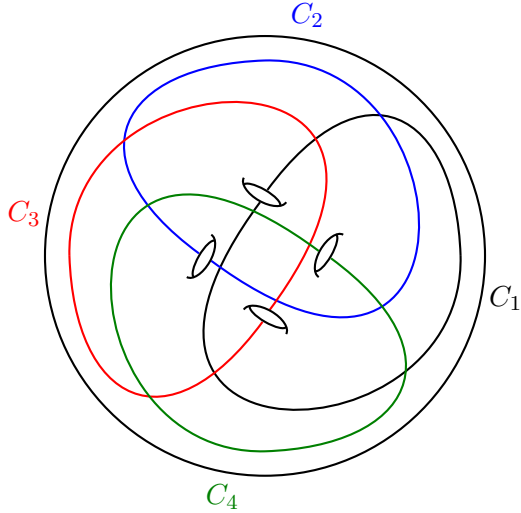

\begin{theorem}\label{thm:examples torelli}
Suppose $g \geq 4$.  For each of the 15 nonempty subsets $A$ of $\{(1,0),(1,1), (0,2),(1,2)\}$ there is a normal subgroup of $\I(S_g)$ isomorphic to 
\[
\BigFreeProd{(a,b) \in A} \left( \BigFreeProd{\infty} \left((F_{\infty})^{b} \times \Z^{a}\right)\right).
\]
\end{theorem}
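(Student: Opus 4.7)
My plan is to mirror the proof of Theorem~\ref{thm:examples pbn}, now using the direct sum decomposition of $H_1(S_g;\mathbb Q)$ as the $\I(S_g)$-invariant that replaces the partition of marked points. Every element of $\I(S_g)$ acts trivially on homology, so any Torelli translate of a subsurface induced by a separating (multi)curve preserves the induced symplectic decomposition. Consequently, two such subsurfaces whose induced decompositions are mutually incompatible (in the sense defined just before the theorem) lie in distinct $\I(S_g)$-orbits, and any $\I(S_g)$-translate of such a subsurface $X$ that does not equal $X$ must overlap $X$---a disjoint translate inducing the same decomposition would have to be isotopic to $X$. This is the Torelli analog of the orbit-overlapping verifications carried out in Theorems~\ref{thm:examples} and~\ref{thm:examples pbn}.

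Following that pattern, I would choose four symmetric separating multicurves $C_1, C_2, C_3, C_4$ in $S_g$ as depicted in Figure~\ref{fig:torelli examples}, arranged so that (i) the four induced direct sum decompositions of $H_1(S_g;\mathbb Q)$ are pairwise incompatible, and (ii) for each $i$ the pair of complementary components $\{L_i, R_i\}$ has the structure required for its assigned case in $\{(1,0),(1,1),(0,2),(1,2)\}$: either swapped by a visible rotational Torelli symmetry of $S_g$, or mutually $\I(S_g)$-incompatible via different genera or different induced decompositions. Letting $A_i$ be an annular neighborhood of $C_i$, I then set
\[
\X = \{A_1,\ L_2 \cup A_2,\ L_3 \cup R_3,\ L_4 \cup A_4 \cup R_4\}
\]
and verify, using the observations of the first paragraph, that $\X$ is $\I(S_g)$-overlapping.

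For each nonempty subset of $\{(1,0),(1,1),(0,2),(1,2)\}$ I would form the corresponding subfamily of $\X$ and let $\F$ consist of a Dehn twist about each annular component (which lies in $\I(S_g)$ because every $C_i$ is separating) together with a partial pseudo-Anosov element of $\I(S_g)$ supported on each non-annular component. Partial pseudo-Anosov elements in the Torelli group of a subsurface with connected boundary and sufficient genus exist by standard constructions, for instance those of Thurston or Penner applied inside the subsurface, provided each relevant complementary component has genus at least $2$---which is what the choice of the $C_i$ arranges. Applying Theorem~\ref{thm:main} with $G = \I(S_g)$ then produces the desired normal right-angled Artin subgroup.

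The main obstacle I anticipate is the combinatorial one of simultaneously arranging, for $g$ as small as $4$, four separating multicurves whose induced decompositions are pairwise incompatible \emph{and} whose complementary regions satisfy the genus constraints needed to host Torelli partial pseudo-Anosovs. The symmetric configuration of Figure~\ref{fig:torelli examples} appears designed precisely for this purpose, and verifying its properties---incompatibility of every pair of induced symplectic splittings, existence of the required swap symmetries in $\I(S_g)$, and sufficient genus of each $L_i, R_i$---is the core technical step of the proof.
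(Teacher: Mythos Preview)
Your approach is essentially the paper's: choose four separating curves of genus at least $2$ whose induced symplectic splittings of $H_1(S_g;\mathbb{Q})$ are pairwise incompatible, form the same family $\X$, deduce $\I(S_g)$--overlapping from the fact that Torelli acts trivially on homology, and apply Theorem~\ref{thm:main}.

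One point to drop: there are no ``rotational Torelli symmetries'' swapping $L_i$ and $R_i$. Any homeomorphism exchanging the two sides of a separating curve interchanges the two summands of the induced decomposition of $H_1(S_g;\mathbb{Q})$ and therefore acts nontrivially on homology, so it cannot lie in $\I(S_g)$. Fortunately you never need such a swap: the very same observation shows that $L_i$ and $R_i$ are automatically mutually $\I(S_g)$--incompatible (even when they are abstractly homeomorphic, as in genus $4$), and this is exactly what the paper uses. So your condition~(ii) reduces to the single case of mutual $\I(S_g)$--incompatibility, and the ``core technical step'' you anticipate in the last paragraph is simpler than you suggest---the swap verification disappears entirely.
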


\begin{proof}

Let $C_1$, $C_2$, $C_3$, and $C_4$ be four separating curves in $S_g$, each with genus at least 2, and so that the corresponding direct sum decompositions of $H_1(S_g;\mathbb{Q})$ are pairwise incompatible.  One such configuration for a surface of genus 4 is indicated in  Figure~\ref{fig:torelli examples}; for higher genus we may add handles to any complementary region.  We then define $A_i$, $L_i$, and $R_i$ as in the proof of Theorem~\ref{thm:examples}.  (For the surface of genus 4, any 4 curves of genus 2 giving distinct direct sum decompositions of $H_1(S_g;\mathbb{Q})$ will suffice, and any such configuration can be extended to higher genus by adding extra handles to a single complementary region).

Consider the following set of subsurfaces of $S_g$:
\[
\{A_1,A_2,A_4\} \cup \{L_2,L_3,L_4\} \cup \{R_3, R_4\}.
\]
The decomposition of $H_1(S_g;\mathbb{Q})$ induced by a separating curve in $S_g$ is the same as the decomposition induced by either of its complementary regions.  Thus, any two elements of the above set with distinct subscripts induce incompatible direct sum decompositions of $H_1(S_g;\mathbb{Q})$.  Because $\I(S_g)$ acts trivially on $H_1(S_g;\mathbb{Q})$, it follows that every pair of subsurfaces in the set
\[
\X = \{A_{1}, L_{2} \cup A_{2}, L_{3} \cup R_{3}, L_{4} \cup A_{4} \cup R_{4} \}
\]
is mutually $\I(S_g)$--incompatible.  It follows that $\X$ is $\I(S_g)$--overlapping.  Applying Theorem~\ref{thm:main} to all possible subsets of $\X$, and choosing $\F$ appropriately, we obtain the desired conclusion (we may apply, for example, the Thurston construction of pseudo-Anosov mapping classes to produce partial pseudo-Anosov elements of $\I(S_g)$ with support in any $L_i$ or $R_i$).
\end{proof}

\section{Projection complexes}\label{sec:back}

In Section~\ref{subsec:def and prop} we recall the definition of a projection complex and explain how any tree may be viewed as a projection complex.  Then in Section~\ref{sec:eg} we recall the theory of WPD elements and explain the construction of a projection complex from a collection of WPD elements.  Finally, in Section~\ref{sec:first} we explain how to build a projection complex from the curve complexes of orbit overlapping subsurfaces of a given surface; our new contribution, Proposition~\ref{prop:disconnected subsurfaces}, is a version for disconnected subsurfaces.


\subsection{The definition}\label{subsec:def and prop}

Let $\Y$ be a set and let $\theta \geq 0$ be a constant.   Assume that for each $y \in \Y$ there is a function
\[
d_{y} \from (\Y \setminus \{y\}) \times (\Y \setminus \{y\}) \to \R_{\geq 0}
\]
with the following properties.

\medskip

\begin{enumerate}[leftmargin=0.5cm,itemsep=.5em]
\item[] {\it Symmetry:} \ $d_{y}(x,z) = d_{y}(z,x)$ for all $x,y,z \in \Y$
\item[] {\it Triangle inequality:} \ $d_{y}(x,z) + d_{y}(z,w) \geq d_{y}(x,w)$ for all $x,y,z,w \in \Y$
\item[] {\it Inequality on triples:} \ $\min \{ d_{y}(x,z), d_{z}(x,y)  \} \leq \theta$ for all $x,y,z \in \Y$
\item[] {\it Finiteness:} \ $\#\{ y \in \Y \mid d_{y}(x,z) > \theta  \}$ is finite  for all $x,z \in \Y$
\end{enumerate}

\medskip

\noindent These conditions are known as the \emph{projection complex axioms}.  When we say that a set $\Y$ and a collection of functions $\{d_y\}_{y \in \Y}$ as above satisfy the projection complex axioms the constant $\theta$ is implicit.  

For a given $K \geq  0$, we will define a graph $\P_K(\Y)$ with vertices corresponding to the elements in $\Y$.  To define the edges, we require the notion of modified distance functions.

Given the functions $\{d_y\}$, Bestvina--Bromberg--Fujiwara \cite{ar:BBF15} constructed another collection of functions $\{d_{y}'\}_{y \in \Y}$, where each $d_y'$ shares the same domain and target as $d_y$.  Because the definition of the $d_y'$ is technical and because we do not use the definition in this paper, we do not state it here.  Bestvina--Bromberg--Fujiwara~\cite[Theorem~3.3B]{ar:BBF15} showed that the modified functions are coarsely equivalent to the original functions: for $x \neq y \neq z \in \Y$, $d'_{y}(x,z) \leq d_{y}(x,z) \leq d'_{y}(x,z) + 2\theta$.

Fix $K \geq 0$.  Then two vertices $x,z$ of $\P_{K}(\Y)$ are connected by an edge if $d'_{y}(x,z) \leq K$ for all $y \in \Y - \{x,z\}$.  Let $d$ denote the resulting path metric on $\P_{K}(\Y)$.  

Bestvina--Bromberg--Fujiwara showed that for $K$ large enough relative to $\theta$, there are constants $\Ke$, $\Kp$, and $\Kg$,  so that the following properties hold (see \cite[Proposition 3.14, and Lemma 3.18]{ar:BBF15}):

\medskip \noindent {\bf Bounded edge image.} If $x\neq y \neq z$ are vertices of $\P_K(\Y)$ and $d(x,z)=1$, then $d_{y}(x,z) \leq \Ke$.  

\medskip \noindent {\bf Bounded path image.}  If a path in $\P_K(\Y)$ connects vertices $x$ to $z$ without passing through the 2--neighborhood of the vertex $y$, then $d_y(x,z) \leq \Kp$. 

\medskip \noindent {\bf Bounded geodesic image.} If a geodesic in $\P_K(\Y)$ connects vertices $x$ to $z$ without passing through the vertex $y$, then $d_y(x,z) \leq \Kg$. \medskip

(The bounded edge image property follows from the definition of the edges of $\P_K(\Y)$, with $\Ke = K+2\theta$.)  If $K$ is large enough so that the graph $\P_K(\Y)$ satisfies the bounded edge, path, and geodesic properties for some $\Ke$, $\Kp$, and $\Kg$, then we say that $\P_K(\Y)$ is a \emph{projection complex}.  

For a projection complex, we refer to $\Ke$, $\Kp$, and $\Kg$ as the \emph{edge constant}, the \emph{path constant}, and the \emph{geodesic constant}, respectively.

We note that our terminology is not standard; in the papers by Bestvina--Bromberg--Fujiwara~\cite{ar:BBF15} and Bestvina--Bromberg--Fujiwara--Sisto~\cite{un:BBFS}, every $\P_K(\Y)$ is called a projection complex.  

\p{Group actions on projection complexes.} We say that a group $G$ acts on a projection complex $\P_K(\Y)$ if $G$ acts on the set $\Y$ in such a way that the associated distance functions $d_y$ are $G$--invariant, i.e., $d_{gy}(gx,gz) = d_{y}(x,z)$.  We note that if the original distance functions $d_y$ are $G$--invariant, then the modified distance functions are $G$--invariant as well, and so the action of $G$ on $\Y$ in particular extends to an action of $G$ on the graph $\P_{K}(\Y)$ by simplicial automorphisms.  

\p{Trees are projection complexes.} As an illustration of the definition of a projection complex, we now explain how an arbitrary simplicial tree can be viewed as a projection complex.  Let $T$ be a simplicial tree and $\Y$ the set of vertices in $T$.  We set:
\begin{equation*}\label{eq:tree distance}
d_{y}(x,z) = \begin{cases}
1 & \mbox{ if $y$ is on the geodesic from $x$ to $z$}, \\
0 & \mbox{ otherwise}.
\end{cases}
\end{equation*}
These functions satisfy the projection complex axioms for any $\theta \geq 0$.  Notice that the inequality on triples axiom is merely the fact that for any triple of points in a tree, at most one is on the geodesic between the other two.  For this example, the modified distance functions are the same as the original distance functions and moreover for any $0 < K < 1$ we see that $\P_{K}(\Y)$ is isomorphic to $T$. On the other hand, if $K \geq 1$ then $\P_{K}(\Y)$ is a complete graph.


\subsection{Projection complexes from WPD elements}\label{sec:eg}

In this section we give the first important general construction of a projection complex.  We begin by briefly describing two motivating examples.  Suppose we have either
\begin{enumerate}
    \item a collection of elements of a free group acting on the Cayley graph of the free group, or
    \item a collection of elements of a surface group acting on the hyperbolic plane.
\end{enumerate}
In either case, there is a projection complex where the vertices are the orbits of the axes of the given elements and the distance functions are given as follows: if $x$, $y$, and $z$ are axes for the given elements, then $d_y(x,z)$ is the diameter of the union of the nearest-point projections of $x$ and $z$ to $y$.  It is an exercise in either graph theory or hyperbolic geometry to show that these functions give rise to a projection complex.   

In the rest of this section, we describe a general setup for constructing projection complexes, inspired by the above two examples.  The end result is Proposition~\ref{prop:many}.  In order to state it we require a number of definitions.  In the remainder of this section, let $X$ be a hyperbolic metric space and let $G$ be a group acting on $X$.

\p{Distance functions from nearest-point projection.}  For a subset $Z$ of $X$ and a point $p \in X$, the \emph{projection of $p$ to $Z$} is the (possibly empty) subset
\begin{equation*}
\pi_{Z}(x) = \{ z \in Z \mid d(x,z) \leq d(x,z') \mbox{ for all } z' \in Z \}.
\end{equation*}
This defines a function $\pi_{Z} \from X \to \wp(Z)$.

Let $\Y$ be a collection of subsets of $X$ and for each $a$ in $\Y$, define $\pi_a \from X \to \wp(a)$ as above.  If $\pi_a(b)$ is bounded and nonempty for all pairs $a,b \in \Y$, we may define $d_a \from \Y\setminus \{a\} \times \Y\setminus\{a\} \to \R$ by
\[ 
d_a(b,c) = \diam(\pi_a(b) \cup \pi_a(c)),
\]
Then the family $\{d_a\}_{a \in \Y}$ satisfies the first two projection complex axioms.  Our goal in what follows is to describe conditions under which the other two axioms are also satisfied.  

One easy special case is where $X$ is a simplicial tree and $\Y$ is any collection of disjoint geodesics.  To verify the last two axioms, the key observation is that the geodesics $y$ with nonzero $d_y(x,z)$ are precisely those sharing an edge with the shortest path between $x$ and $z$.

\p{Translation length and hyperbolic elements.}   Let $f \in G$.  The \emph{translation length} of $f$ is the limit
\begin{equation*}
\tau(f) = \lim_{n \to \infty} \frac{d(p,f^{n}p)}{n}.
\end{equation*}
for some $p \in X$; this limit is independent of the point $p$.  We say $f$ is \emph{hyperbolic} if $\tau(f) > 0$.  

\p{Weak proper discontinuity.}
We say $f \in G$ is a \emph{WPD element} if it is hyperbolic and for all $D \geq 0$ and $p \in X$ there exists $M \geq 0$ such that the set
\[ \{ g \in G \mid d\left(p,gp\right) \leq D \mbox{ and } d\left(f^{M}p,g\left(f^{M} p\right)\right) \leq D \} \]
is finite.  The WPD elements for the $\Mod(S)$ action on $\C(S)$ are precisely the pseudo-Anosov mapping classes~\cite{ar:BF02}.

\p{Elementary closure.} Suppose $f \in G$ is a WPD element and let $\calO$ be the orbit of some point $X$ under $\grp{f}$.  Define the \emph{elementary closure}, $\EC(f)$, to be the subgroup of $g \in G$ such that there exists a $\Delta \geq 0$ with $g\calO$ contained in the $\Delta$--neighborhood of $\calO$.  The group $\EC(f)$ is well-defined independent of the orbit.  Bestvina--Bromberg--Fujiwara proved \cite[Proposition 4.7]{un:BBF-v1} that for such an $f \in G$, there is a short exact sequence
\begin{equation*}\label{eq:elementary closure}
1 \to \EC_{0}(f) \to \EC(f) \to \Gamma \to 1,
\end{equation*}
where $\Gamma$ is isomorphic to either $\Z$ or $\Z_{2} \ast \Z_{2}$ and $\EC_{0}(f)$ is finite.  The set $\EC(f) \cdot \calO$ is called a \emph{quasi-axis bundle} for $f$.  It follows from the definitions that the stabilizer of any quasi-axis bundle is $\EC(f)$.

As $X$ is hyperbolic, any hyperbolic isometry acts with north--south dynamics on the boundary of $X$.  It follows readily that for a WPD element $f \in G$, the group $\EC(f)$ is the stabilizer in $G$ of the pair of fixed points in $\partial X$ for $f$.  In particular, if $f \in \Mod(S)$ is pseudo-Anosov and $X$ is the curve complex $\C(S)$, then $\EC(f)$ is the stabilizer in $G$ of the pair of projective measured foliations on $S$ fixed by $f$, as defined in the introduction.

\p{Normal independence.} We say that WPD elements $f_{1}$ and $f_{2}$ in $G$ are \emph{independent} if $\EC(f_{1}) \cap \EC(f_{2})$ is finite.  This is equivalent to requiring that the fixed points sets for $f_1$ and $f_2$ in $\partial X$ are disjoint.  We further say that $f_{1}$ and $f_{2}$ are \emph{normally independent} if every conjugate of $f_{1}$ is independent of $f_{2}$.  We say that a collection of WPD elements is \emph{\textup{(}normally\textup{)} independent} if they are pairwise (normally) independent.

For Propositon~\ref{prop:many}, we will use the easy observation that normally independent elements cannot have conjugate powers.  In fact, one can prove that two WPD elements of a group acting on a hyperbolic space are normally independent if and only if they fail to have conjugate nontrivial powers.

\bigskip

The following result of Dahmani--Guirardel--Osin \cite{ar:DGO17} will be used to construct a projection complex in Section~\ref{sec:niwpd-proof} (for the action of the mapping class group on the curve complex).

\begin{proposition}[Dahmani--Guirardel--Osin]\label{prop:many}
Let $G$ be a group and $X$ a hyperbolic metric space on which $G$ acts.   Let $\{f_{1},\ldots,f_{m}\}$ be a normally independent family of WPD elements and for each $f_i$ let $\beta_i \subseteq X$ be a quasi-axis bundle. Then the set $\Y= G \cdot \{\beta_1,\ldots,\beta_m\}$ together with the distance functions $\{d_\beta\}_{\beta \in \Y}$ satisfy the projection complex axioms.
\end{proposition}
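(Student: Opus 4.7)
The first two projection complex axioms---symmetry and the triangle inequality---are immediate from the definition $d_\beta(\beta',\beta'') = \diam(\pi_\beta(\beta') \cup \pi_\beta(\beta''))$, so all the work is in the last two.

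As a preliminary, I would establish two facts about elements of $\Y$. First, each $\beta \in \Y$ is uniformly quasi-convex in $X$ with a well-defined pair of limit points in $\partial X$: by construction $\beta$ is a bounded neighborhood of an orbit of a $G$-conjugate of some $f_i$, and since $f_i$ is WPD and hence hyperbolic, its orbits are quasi-geodesics. Second, distinct elements of $\Y$ have disjoint pairs of limit points in $\partial X$. This is where normal independence enters: if $g_1\beta_i$ and $g_2\beta_j$ shared a boundary point, then the infinite-order elements $g_1 f_i g_1^{-1}$ and $g_2 f_j g_2^{-1}$ would have overlapping fixed point sets, and conjugating by $g_2^{-1}$ we would find a conjugate of $f_i$ failing to be independent of $f_j$, contradicting the normal independence of $\{f_1,\ldots,f_m\}$ unless the two bundles coincide. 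Consequently $\pi_\beta(\beta')$ is a coarsely well-defined bounded subset of $\beta$ whenever $\beta \neq \beta'$ in $\Y$, so each $d_\beta$ takes finite values.

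For the inequality on triples, I would apply the standard Behrstock-style dichotomy: given three uniformly quasi-convex subsets $x, y, z$ of a $\delta$-hyperbolic space with pairwise disjoint limit sets, at least one of $\pi_y(x) \cup \pi_y(z)$ or $\pi_z(x) \cup \pi_z(y)$ must have diameter bounded by a constant $\theta$ depending only on $\delta$ and the quasi-convexity constant. The proof picks approximate-projection points in each bundle, draws a geodesic between suitable representatives of $x$ and $z$, and uses thinness of triangles together with the quasi-convexity of $y$ and $z$ to force the bound.

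For the finiteness axiom, fix $x, z \in \Y$ and pick $p \in x$, $q \in z$. Retracing the Behrstock argument, any $y$ with $d_y(x,z) > \theta$ must come within a uniform distance $R$ of the geodesic $[p,q]$. It then suffices to show that for each $i$, only finitely many $G$-translates of $\beta_i$ meet the $R$-neighborhood of $[p,q]$. Each such translate $g \beta_i$ is the $g\EC(f_i)g^{-1}$-orbit of a basepoint, so passing to a coset representative we may assume this basepoint lies in a fixed bounded region; the WPD condition on $f_i$ then bounds the number of cosets. The main obstacle is the Behrstock dichotomy: it is classical, but in this general hyperbolic setting with bundles arising from WPD orbits, the setup and bookkeeping of constants must be carried out carefully. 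Once it is in hand, it supplies both the triples inequality and the geometric input for finiteness, after which the WPD condition closes out the proof.
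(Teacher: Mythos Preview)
Your outline is broadly the right shape, but there is a genuine gap in the finiteness argument. You reduce to showing that only finitely many $G$--translates of $\beta_i$ meet the $R$--neighborhood of a fixed geodesic segment $[p,q]$, and then invoke WPD. That reduction is too coarse: in general infinitely many translates of a quasi-axis can graze a bounded region, each touching it at only a short interval, and WPD does not rule this out. The WPD condition requires \emph{two} well-separated points to be moved a bounded amount, so it only controls translates that fellow-travel $[p,q]$ along a \emph{long} subsegment. What you actually need is that $d_y(x,z) > \theta$ forces $y$ to fellow-travel $[p,q]$ for a length comparable to $\theta$; then for each such $y = g\beta_i$ you can pick coset representatives so that both a basepoint and a translate of it by a large power of $f_i$ lie in a fixed bounded set, and now WPD bounds the number of cosets. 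Your sketch only arranges one nearby point, which is not enough.

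For comparison, the paper does not give a self-contained proof at all: it extracts the result from Dahmani--Guirardel--Osin by observing that their Theorem~6.8 shows the groups $\EC(f_i)$ are \emph{geometrically separated}, and their Lemma~4.47 then verifies the projection complex axioms for the orbits of geometrically separated subgroups (with a remark that changing the quasi-axis bundle only changes constants). Your direct approach is essentially unpacking what that machinery does---the Behrstock dichotomy for quasi-convex sets is the geometric heart of their Lemma~4.47---so the difference is one of packaging. The advantage of the citation route is that the delicate finiteness argument is already done carefully in DGO; the advantage of your route would be self-containment, but for that you must repair the fellow-traveling step above.
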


If $\P$ is a projection complex arising from Proposition~\ref{prop:many}, and if the translation distances of the $f_i$ are all bounded below by $L$, then the collection $\{\grp{gf_i g^{-1}} \mid g \in G, \, 1 \leq i \leq m \}$ is an equivariant $L$--spinning family of subgroups.   We will use this fact in the proof of Theorem~\ref{thm:base} in Section~\ref{sec:niwpd-proof}.

Dahmani--Guirardel--Osin did not state Proposition~\ref{prop:many} explicitly, but it follows from their work as we now explain.  Proposition~\ref{prop:many} has the same hypothesis as Theorem 6.8 in their paper, whose proof hinges on showing that the groups $\EC(f_i)$ are geometrically separated, as defined in that paper.  Lemma 4.47 in their paper proves, in particular, that the $G$--translates of the quasi-axis bundles (i.e.~the orbits of the geometrically separated subgroups) and the associated distance functions satisfy the  projection complex axioms.  In their paper, the quasi-axes bundles $\beta_i$ are all orbits of a common basepoint, but standard coarse geometry for hyperbolic spaces can be used to extend Lemma 4.47 to arbitrarily chosen quasi-axis bundles, up to changing the constants of the projection complex.

Proposition~\ref{prop:many} actually holds with the assumption that $X$ is a hyperbolic space replaced by the requirement that the WPD elements have strongly contracting axes.  The argument in the case of a single element appears in the first version of a paper by Bestvina--Bromberg--Fujiwara \cite{un:BBF-v1}, and we prove the general case in a forthcoming paper \cite{ar:CMMexpos}.


\subsection{Projection complexes from subsurface projections}\label{sec:first}

We recall the Bestvina--Bromberg--Fujiwara construction of the projection complex associated to the curve complexes of a collection of orbit-overlapping connected subsurfaces of a given surface.  We then give our generalization to the case of disconnected subsurfaces.

\p{Projection complexes from connected $G$--overlapping subsurfaces.} Given a nonannular connected subsurface $X \subseteq S$, and a curve $\gamma$ in the curve complex $\C(S)$, the \emph{projection} of $\gamma$ to $X$ is the subset $\pi_{X}(\gamma) \subset \C(X)$ of curves that have a representative disjoint from some some arc in $X \cap \gamma$.  When $X \subseteq S$ is an annulus, the definitions of $\C(X)$ and $\pi_{X} \from \C(S) \to \wp(\C(X))$ are more complicated; see~\cite{ar:MM00} for details.  In either case, when $\pi_{X}(\gamma)$ is nonempty, the diameter of $\pi_{X}(\gamma)$ is at most 2~\cite[Lemma~2.3]{ar:MM00}.  Moreover, the set $\pi_{X}(\gamma)$ is nonempty   whenever $\gamma$ essentially intersects $X$.

If $\Y$ is a $\Mod(S)$--invariant family of isotopy classes of connected subsurfaces of $S$ such that $\pi_Y(\partial X)$ is nonempty for all distinct $X,Y \in \Y$, then subsurface projection defines a function $d_Y \from \Y - \{Y\} \times \Y - \{Y\} \to \R_{\geq 0}$ by:
\begin{equation*}\label{eq:subsurface distance}
d_{Y}(X,Z) = \diam_{\C(Y)}(\pi_{Y}(\partial X) \cup \pi_{Y}(\partial Z)).
\end{equation*} 
These functions clearly satisfy the first two projection complex axioms.  That the inequality on triples axiom is satisfied for $\theta = 10$ is known as the Behrstock inequality~\cite{ar:Behrstock06,ar:Mangahas10}.   Bestvina--Bromberg--Fujiwara~\cite[Lemma~5.3]{ar:BBF15} show that the finiteness axiom holds for $\theta = 3$.

\p{Projection complexes from arbitrary $G$--overlapping subsurfaces.}  Let $G$ be a subgroup of $\Mod(S)$ and let $\X$ be an $G$--overlapping family of subsurfaces of $S$.  Set $\Y = G \cdot \X$.  Given $Y \in \Y$ we define a function $\dd_{Y} \from \Y - \{Y\} \times \Y - \{Y\} \to \R_{\geq 0}$ by:
\begin{equation*}\label{eq:disconnected projection sum}
\dd_{Y}(X,Z) = \sum_{Y' \in \pi_{0}(Y)} d_{Y'}(X,Z).
\end{equation*}
where $d_{Y'}$ is the subsurface projection distance defined above.  As $\X$ is an $G$--overlapping family, each term in the summand is defined.  The following proposition gives us the projection complex used for Theorem~\ref{thm:main}.

\begin{proposition}\label{prop:disconnected subsurfaces}
Suppose $G$ is a subgroup of $\Mod(S)$ and $\X$ is a $G$--overlapping family of subsurfaces.  Then the set  $\Y = G \cdot \X$ together with the distance functions $\{ \dd_Y \}_{Y \in \Y}$ satisfy the projection complex axioms.
\end{proposition}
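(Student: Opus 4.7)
The plan is to verify each of the four projection complex axioms for the distance functions $\{\dd_Y\}_{Y \in \Y}$ by reducing to the connected case component-by-component. Let $k$ denote a universal upper bound, depending only on the topology of $S$, on the number of components of any subsurface of $S$. Symmetry and the triangle inequality for $\dd_Y$ are immediate from the corresponding properties of the per-component distances $d_{Y'}$, since $\dd_Y$ is a finite sum of these.

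The essential observation driving both of the remaining axioms is the following: for any subsurface $W$ of $S$, the boundary $\partial W$ is a pairwise disjoint collection of simple closed curves, and projections of pairwise disjoint curves to a connected subsurface $V$ are pairwise within distance $2$ in $\C(V)$; hence $\diam \pi_V(\partial W) \leq 2$. In particular, both $\diam \pi_{Y_i}(\partial X)$ and $\diam \pi_{Y_i}(\partial Z)$ are uniformly bounded, and writing $\pi_{Y_i}(\partial Z) = \bigcup_j \pi_{Y_i}(\partial Z_j)$, one concludes that $d_{Y_i}(\partial X, \partial Z_j) \geq d_{Y_i}(X, Z) - 4$ for \emph{every} component $Z_j$ of $Z$, not just one.

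For the inequality on triples, I would argue as follows. If some $d_{Y_i}(X, Z)$ exceeds a large topological threshold, then by the coarse equality above, $d_{Y_i}(\partial X, \partial Z_j)$ exceeds the Behrstock constant for every $j$. Since each pair $(Y_i, Z_j)$ consists of overlapping connected subsurfaces (by the $G$--overlapping hypothesis on $\X$), the Behrstock inequality yields $d_{Z_j}(\partial X, \partial Y_i) \leq 10$ for every $j$, and the symmetric bounded-diameter argument applied to $\partial Y$ upgrades this to $d_{Z_j}(X, Y) = O(1)$ for every $j$. Summing over the at most $k$ components of $Z$ gives $\dd_Z(X, Y) = O(1)$. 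A pigeonhole step then shows that $\dd_Y(X, Z)$ larger than $k$ times the per-component threshold forces some $d_{Y_i}(X, Z)$ above threshold, hence $\dd_Z(X, Y) = O(1)$, as required.

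For finiteness, fix $X, Z \in \Y$ and consider $Y \in \Y$ with $\dd_Y(X, Z) > \theta$. Pigeonhole supplies a component $Y_i$ with $d_{Y_i}(X, Z) > \theta/k$, and the finiteness of subsurface projections \cite[Lemma~5.3]{ar:BBF15} implies that only finitely many connected subsurfaces $W$ satisfy $d_W(\partial X, \partial Z) > \theta/k$. To finish, one uses the $G$--overlapping condition to rule out distinct elements of $\Y$ sharing a component: if $Y \neq Y'$ in $\Y$ shared a component $W$, then $Y$ and $Y'$ would have to overlap, which by the definition of overlap for disconnected subsurfaces would force $W$ to overlap itself, a contradiction. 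Hence each such $Y_i$ determines its parent $Y \in \Y$ uniquely, yielding finiteness. The principal obstacle is the inequality on triples; the key idea there is the bound $\diam \pi_{Y_i}(\partial Z) \leq 2$ coming from the pairwise disjointness of the components of $\partial Z$, which enables a single large $d_{Y_i}(X, Z)$ to push \emph{every} $d_{Y_i}(\partial X, \partial Z_j)$ simultaneously above the Behrstock threshold, so that one application of Behrstock per component-pair controls the entire sum $\dd_Z(X, Y)$.
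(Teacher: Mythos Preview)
Your proposal is correct and follows essentially the same route as the paper: reduce each axiom to the connected case via pigeonhole on components, use that projections of disjoint boundary curves have uniformly bounded diameter to pass between $d_{Y_i}(X,Z)$ and $d_{Y_i}(X',Z_j)$, apply Behrstock per component-pair, and for finiteness use that distinct elements of $\Y$ cannot share a component since they must overlap. The only cosmetic difference is that from $\dd_Y(X,Z)$ large you bound $\dd_Z(X,Y)$ (the direct form of the axiom), whereas the paper bounds $\dd_X(Y,Z)$; these are equivalent since, stated for all triples, the axiom says at most one of the three projections can be large.
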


\begin{proof}The first two projection complex axioms hold for the functions $\dd_{Y}$, since they hold for the summands.  

Let $C$ denote the maximum number of components of a subsurface in $\Y$.
Suppose $\dd_{Y}(X,Z) > 12C$.  Thus $d_{Y'}(X,Z) > 12$ for some $Y' \in \pi_{0}(Y)$ and hence $d_{Y'}(X',Z') > 10$ for each $X' \in \pi_0(X)$ and $Z' \in \pi_0(Z)$.  Thus by the Behrstock inequality, we have $d_{X'}(Y,Z) \leq 10$ for any component $X' \in \pi_{0}(X)$.  Hence $\dd_{X}(Y,Z) \leq 10C$.  This shows that the inequality on triples axiom holds for $\{\dd_Y\}_{Y \in \Y}$ using $\theta = 12C$.

Finally, as $\#\{ Y \mid d_{Y}(X,Z) > 3 \} < \infty$ for any connected subsurfaces $X,Z \subseteq S$, we see the same is true for $\{ Y \in \Y \mid \dd_{Y}(X,Z) > 5C \}$ for any $X,Z \in \Y$.  Indeed, fix $X,Z \in \Y$.  If $\dd_{Y}(X,Z) > 5C$, then $d_{Y'}(X,Z) > 5$ for some $Y' \in \pi_0(Y)$.  Hence $d_{Y'}(X',Z') > 3$ for any $X' \in \pi_0(X)$ and $Z' \in \pi_0(Z)$.  Thus there are only finitely many possibilities for the subsurface $Y'$.  However, as the subsurfaces in $\Y$ are either equal or overlapping, such a subsurface $Y'$ can be a component for only a single subsurface in $\Y$.  Thus the finiteness axiom holds and the proposition is proved.  
\end{proof}

\section{Windmills and waypoints in projection complexes}
\label{sec:windmillsprojection}

Here we develop our theory of windmills for group actions on projection complexes.  In Section~\ref{sec:proof} we use this to prove Theorem~\ref{thm:spin}, which states that if we have a group $G$ acting on a projection complex $\P$ and if we have an equivariant $L$--spinning family $\{R_v\}$ of subgroups of $G$, then when $L$ is large, the subgroup of $G$ generated by all $R_v$ is isomorphic to the free product of some of the $R_v$.  

We begin with the basic definitions regarding windmills, which allow us to rephrase Theorem~\ref{thm:spin} as  Proposition~\ref{prop:rephrase} below. Then we introduce the notions of pivot points and waypoints, which gives a further reduction of Proposition~\ref{prop:rephrase}, given in Proposition~\ref{prop:pivotway}.  


\subsection{Windmill data}
\label{sec:data}

Given an action of a group $G$ on a projection complex $\P$ with an equivariant family of subgroups $\{R_v\}$ of $G$, we can inductively define a sequence of subgraphs $W_i$ of $\P$, a sequence of subsets $\calO_i$ of the set of vertices of $\P$, and a sequence of subgroups $H_i$ of $G$ as follows.  (In fact, the constructions of this section apply to group actions on arbitrary graphs.)

Let $v_0$ be some base point for $\P$.  To begin the inductive definitions at $i=0$, we define:
\begin{itemize}
\item $H_0 = R_{v_0}$ and
\item $W_0 = \calO_0 =  \{v_0\}$.
\end{itemize}
For $i \geq 1$, we denote by $N_i$ the 1--neighborhood of $W_{i-1}$, we denote by $L_i$ the vertices of $N_i \setminus W_{i-1}$, and we define:
\begin{itemize}
\item $H_i = \grp{R_v \mid v \in N_{i}}$,
\item $W_i = H_i \cdot N_i$, and
\item $\calO_i =$ a set of orbit representatives for the action of $H_{i-1}$ on $L_i$.
\end{itemize}
The letter W here stands for ``windmill'' and we refer to the set
\[
\{(H_i,W_i,\calO_i)\}_{i=0}^{\infty}
\]
as a set of \emph{windmill data} for the equivariant family $\{R_v\}$.

We can see inductively that each $W_i$ is connected.  The keys to the inductive step are that $H_i$ acts on $W_i$ and that each generator $g$ of $H_i$ has the property that $\left( g  N_i  \right) \cap N_i \neq \emptyset$.

Given the above windmill data, we may further define both a sequence of groups and a sequence of homomorphisms, for $i \geq 0$:
\begin{itemize}
\item $F_i = \displaystyle\left(\BigFreeProd{v \in \calO_i}R_v\right) * F_{i-1}$ and
\item $\rho_i \from F_i \to H_i$, the natural map.
\end{itemize}
In the definition of $F_0$ we treat $F_{-1}$ as the trivial subgroup, and so $F_0 = R_{v_0}$.  Note that, by definition, each $\rho_i$ is injective on each $R_v \subseteq F_i$.  

We refer to the collection $\{(F_i,\rho_i)\}_{i=0}^\infty$ as the \emph{free product data} associated to the windmill data $\{(H_i,W_i,\calO_i)\}_{i=0}^{\infty}$.  

We denote the direct limit of the $F_i$ by $F$:
\[
F = \lim_{\longrightarrow} F_i.
\]
There is a natural partition of $F$ into sets $\{F^{(i)}\}_{i=-1}^\infty$ defined by
\[
F^{(i)} = F_i \setminus F_{i-1},
\]
where we treat $F_{-2}$ as the empty set, and so $F^{(-1)} = \{\textrm{id}\}$ and $F^{(0)} = R_{v_0} \setminus \{\textrm{id}\}$.  We refer to each $F^{(i)}$ as the $i$th \emph{level} of $F$.

The subgroup $H$ of $G$ generated by the $R_v$ is the direct limit of the $H_i$.   With this setup in hand we may rephrase Theorem~\ref{thm:spin} in the following way.

\begin{proposition}
\label{prop:rephrase}
Let $\P$ be a projection complex, $G$ a group that acts on $\P$, and $\{R_v\}$ an equivariant family of subgroups.  Choose windmill data $\{(H_i,W_i,\calO_i)\}_{i=0}^{\infty}$.  Then the conclusion of Theorem~\ref{thm:spin} holds if each $\rho_i$ is injective.
\end{proposition}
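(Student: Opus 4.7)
The plan is to pass to the direct limit. The windmill data comes with natural inclusions $F_{i-1} \hookrightarrow F_i$ (from the defining free product $F_i = (\BigFreeProd{v \in \calO_i} R_v) * F_{i-1}$) and $H_{i-1} \hookrightarrow H_i$ (both are subgroups of $G$, and $H_{i-1}$ is generated by a subfamily of the generators of $H_i$), and the maps $\rho_i$ are compatible with these. So the direct limit $\rho \from F \to H$ of the $\rho_i$ is well-defined, where $H = \grp{R_v}$ coincides with $\bigcup_i H_i$.

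First I would verify that each $\rho_i$ is surjective onto $H_i$. Since $H_i$ is generated by the $R_v$ for $v \in N_i$, and by induction $N_i \subseteq H_{i-1} \cdot (\calO_0 \cup \dots \cup \calO_i)$ (using $L_i = H_{i-1} \cdot \calO_i$ and $W_{i-1} \subseteq H_{i-1} \cdot (\calO_0 \cup \dots \cup \calO_{i-1})$), equivariance implies each such $R_v$ is an $H_{i-1}$-conjugate of some $R_w$ with $w \in \calO_0 \cup \dots \cup \calO_i$, hence lies in the image of $\rho_i$. So $\rho_i$, and therefore $\rho$, is surjective. Given that each $\rho_i$ is injective, $\rho$ is also injective, since any element of $F$ lies in some $F_i$, where $\rho$ restricts to $\rho_i$. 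Thus $\rho$ is an isomorphism.

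By construction $F \cong \BigFreeProd{v \in \calO} R_v$ for $\calO = \bigcup_i \calO_i$, so it remains to check that $\calO$ is a set of orbit representatives for the action of $H$ on the vertices of $\P$. Every vertex of $\P$ lies in some $W_i$ (since the projection complex is connected as a quasi-tree, and the $W_i$ exhaust $\P$), and an easy induction using $L_i = H_{i-1} \cdot \calO_i$ gives $W_i \subseteq H \cdot \calO$. For distinctness of orbits: if $v \neq w$ in $\calO$ lay in a common $H$-orbit, then by equivariance $R_v$ and $R_w$ would be conjugate in $H$; but via the isomorphism $H \cong \BigFreeProd{u \in \calO} R_u$ they appear as distinct nontrivial free factors (nontriviality coming from the spinning condition), which are never conjugate in a free product.

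The substantive content of the proof is therefore the injectivity of $\rho$, which reduces by direct limits to the hypothesized injectivity of each $\rho_i$. The remaining ingredients—surjectivity, exhaustion of $\P$ by the $W_i$, and distinctness of orbits—are essentially formal consequences of the windmill construction. The real obstacle in carrying out this program is thus not in the present proposition but in later sections: proving that $\rho_i$ is injective for a sufficiently large spinning constant $L$, which is what the pivot-point and waypoint machinery developed after this proposition is designed to accomplish.
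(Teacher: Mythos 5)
Your proof is correct and follows essentially the same route as the paper's: reduce the isomorphism to injectivity of the direct-limit map $\rho \from F \to H$ (which holds iff each $\rho_i$ is injective), and verify that $\bigcup_i \calO_i$ is a set of orbit representatives via exhaustion by the windmill and the fact that distinct free factors have trivially intersecting conjugates. The only cosmetic differences are that you spell out surjectivity of the $\rho_i$, which the paper treats as immediate, and prove exhaustion by induction on the $W_i$ rather than by taking the minimal $i$ with $O \cap N_i \neq \emptyset$.
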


\begin{proof}

If $v$ and $w$ are two vertices of $\P$ in the same $H$--orbit, then by the equivariance condition, we have $R_v \cong R_w$.  Thus, we may check that the conclusion of Theorem~\ref{thm:spin} holds for any particular choice $\calO$ of orbit representatives.  We will check that it holds for $\calO = \cup \calO_i$.  

First we verify that $\cup \calO_i$ is indeed a set of orbit representatives.  The equivariance condition means two vertices $v, w$ in the same orbit have conjugate corresponding subgroups $R_v$ and $R_w$.  On the other hand, if $v,w \in \cup\calO_i$, then $R_v$ and $R_w$ are free factors in the domain $F_i$ of $\rho_i$, and distinct free factors of a free product have trivially intersecting conjugates.  Thus injectivity of the $\rho_i$ implies that distinct vertices of $\cup \calO_i$ represent distinct $H$--orbit representatives.  We must also show that every $H$--orbit is represented in $\cup \calO_i$.  Given $O$ an $H$--orbit of vertices in $\P$, there exists a minimal $i$ such that $O \cap N_i$ is not empty.  By minimality of $i$, if $w$ is in $O \cap N_i$, then it is not contained in $W_{i-1}=H_{i-1}\cdot N_{i-1}$.  In particular $w \in L_i$ and thus has an orbit representative in $\calO_i$.

The $\rho_i$ induce a natural surjective map $\rho \from F \to H$.  The statement that $\grp{R_v}$ is isomorphic to the free product
\[
\BigFreeProd{v\in \cup \calO_i} R_v
\]
is equivalent to the statement that $\rho$ is injective, which is in turn equivalent to the statement that each $\rho_i$ is injective.  This is true by hypothesis, and so the proof is complete.
\end{proof}

\subsection{Pivot points and waypoints}\label{sec:pivot-way}

We continue with the notation of the previous section.  To each element of $F$ at a given level $i\geq 0$, we associate a subset of the vertices of $\P$, as follows.  Each $h \in F^{(i)}$ has a \emph{syllable decomposition} $h_1 \cdots h_n$ with respect to the free product decomposition used to define $F_i$.  Specifically, each syllable $h_j$ is either a nontrivial element of $F_{i-1}$ or a nontrivial element of $R_{v_j}$ with $v_j \in \calO_i$, and also the following property is satisfied: no two consecutive syllables are of the first type and consecutive syllables $h_j$ and $h_{j+1}$ of the second type have distinct corresponding fixed vertices $v_j$ and $v_{j+1}$.  We refer to $n$ as the \emph{syllable length} of $h$.

Let $i \geq 1$ and fix some $h \in F^{(i)}$ with syllable decomposition $h=h_1\cdots h_n$.  For $j \in \{1,\dots, n\}$ with $h_j \notin F_{i-1}$ and with corresponding fixed vertex $v_j$ we define a vertex $w_j$ of $\P$ as follows:
\[
w_j = h_1\dots h_{j-1}v_j.
\]
Note that $v_j$ and $w_j$ are not defined for $h_j \in F_{i-1}$.  Let $\W(h)$ be the ordered list of points $w_j$, and call these the \emph{pivot points} for $h$. For $h \in F_0$ we define $\W(h)$ to be empty.

Each group $F_i$ acts on $\P$ via $\rho_i$.  We say that a vertex $v$ of $\P$ is a \emph{waypoint} for $h \in F_i$ if it is not equal to $v_0$ or $hv_0$ and
every geodesic from $v_0$ to $hv_0$ passes through $v$.  If at least one pivot point for $h$ is a waypoint, it follows that $\rho_i(h)$ is nontrivial.  Recall that Proposition~\ref{prop:rephrase} reduces Theorem~\ref{thm:spin} to proving injectivity of $\rho_i$ for all $i$.  Hence we have further reduced our task to proving that every nontrivial $h \in F^{(i)}$ has a waypoint, for all $i \geq 1$.  We record this fact in the following proposition.

\begin{proposition}\label{prop:pivotway}
Let $\P$ be a projection complex, $G$ a group that acts on $\P$, and $\{R_v\}$ an equivariant family of subgroups.  Choose windmill data $\{(H_i,W_i,\calO_i)\}_{i=0}^{\infty}$ and let $\{(F_i,\rho_i)\}_{i=0}^\infty$ be the associated free product data.  If for each $i \geq 1$ and each $h \in F^{(i)}$ at least one element of $\W(h)$ is a waypoint for $h$, then the conclusion of Theorem~\ref{thm:spin} holds.
\end{proposition}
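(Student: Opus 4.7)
The plan is to reduce to Proposition~\ref{prop:rephrase}, which already rephrases Theorem~\ref{thm:spin} as the claim that each map $\rho_i \from F_i \to H_i$ is injective. I would then prove injectivity of the $\rho_i$ by induction on the level $i$.

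The base case $i=0$ is immediate: $F_0 = R_{v_0} = H_0$ and $\rho_0$ is the natural inclusion. For the inductive step, assume $\rho_{i-1}$ is injective and take a nontrivial $h \in F_i$; since $F_i = F_{i-1} \sqcup F^{(i)}$, two cases must be handled. If $h \in F_{i-1}$, then because $F_{i-1}$ appears as a free factor of $F_i$ and $H_{i-1} \leq H_i$, the map $\rho_i$ restricts to $\rho_{i-1}$ on $F_{i-1}$, so $\rho_i(h) = \rho_{i-1}(h) \neq \mathrm{id}$ by the inductive hypothesis. If instead $h \in F^{(i)}$, the standing hypothesis of the proposition supplies a pivot point $w \in \W(h)$ that is a waypoint for $h$. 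By definition this means $w \neq v_0$, $w \neq hv_0$, and every geodesic in $\P$ from $v_0$ to $hv_0$ passes through $w$. The existence of such a $w$ forces $v_0 \neq hv_0$, since otherwise the only geodesic from $v_0$ to $hv_0$ would be the length-zero path at $v_0$, which does not contain $w$. Hence $\rho_i(h) \cdot v_0 \neq v_0$, and in particular $\rho_i(h) \neq \mathrm{id}$.

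Combining the two cases establishes injectivity of $\rho_i$ on all of $F_i$, closing the induction, and Proposition~\ref{prop:rephrase} then delivers the conclusion of Theorem~\ref{thm:spin}. I do not expect any real obstacle in this proposition itself: once the pivot-point-equals-waypoint hypothesis is assumed, the deduction is essentially formal bookkeeping about free products plus the observation above that a waypoint rules out $v_0 = hv_0$. The substantive work will instead lie in verifying that hypothesis for the windmill data in a projection complex that is not already a tree, which is presumably where the bounded edge, path, and geodesic image constants enter and where the $L$--spinning assumption on $\{R_v\}$ gets used — that is the step I would anticipate being genuinely technical, and the one this proposition is designed to reduce the theorem to.
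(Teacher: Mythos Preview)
Your proposal is correct and follows the same approach as the paper, which simply records this proposition after the observation in the text that a waypoint for $h$ forces $\rho_i(h)$ to be nontrivial, together with the reduction already given by Proposition~\ref{prop:rephrase}. Your induction on $i$ is a perfectly good way to organize the argument; the paper leaves this implicit by noting that the hypothesis covers every $h \in F^{(i)}$ for all $i \geq 1$, while the level-$0$ case is tautological since $\rho_0$ is the identity on $R_{v_0}$.
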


\section{Windmills in trees}\label{sec:trees}

In this section, we show how the windmill machinery developed in Section~\ref{sec:windmillsprojection} is used in a special case, namely, the case of a group acting on a tree.  Let $G$ be a group acting on a tree $\T$, with $\{R_v\}$ an equivariant spinning family of subgroups.  The spinning condition takes a particularly simple form in the case of a tree, as follows.
\begin{itemize}
\item \emph{Spinning condition for trees:} For any vertex $v$ of $\T$, any incident edge $e$ of $\T$, and any nontrivial $h \in R_v$ we have
\[
h(e) \neq e.
\]
\end{itemize}
We have the following special case of Theorem~\ref{thm:spin}.

\begin{theorem}
\label{thm:treecase}
Let $\T$ be a tree and let $G$ be a group acting on $\T$.  If $\{R_v\}$ is an equivariant spinning family of subgroups of $G$ with windmill data $\{(H_i,W_i,\calO_i)\}_{i=0}^{\infty}$ then the subgroup $H$ of $G$ generated by the $R_v$ is isomorphic to the free product
\[
\BigFreeProd{v\in \cup \calO_i} R_v.
\]
\end{theorem}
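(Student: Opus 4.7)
My strategy is to apply Proposition~\ref{prop:pivotway}: it suffices to show that for each $i \geq 1$ and each nontrivial $h \in F^{(i)}$, at least one pivot point of $h$ is a waypoint for $\rho_i(h)$. Because geodesics in trees are unique, a vertex $v$ distinct from $v_0$ and $\rho_i(h) v_0$ is a waypoint if and only if it lies on the unique geodesic $[v_0, \rho_i(h) v_0]$. The geometric workhorse is the following \emph{spinning lemma in a tree}: if $v$ is a vertex, $f \in R_v$ is nontrivial, and $u \neq v$, then $v$ lies on $[u, fu]$. Indeed, if $e$ is the first edge of $[v,u]$, then the first edge of $[v, fu] = f \cdot [v,u]$ is $fe$, and the tree spinning condition forces $fe \neq e$, so $[v,u]$ and $[v,fu]$ diverge at $v$; uniqueness in trees then forces $[u, fu] = [u,v] \cup [v,fu]$.

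I will prove the following \emph{Main Claim} by induction on $i$, and within each level by induction on the syllable length $n$ of $h$: for $h = h_1 \cdots h_n \in F^{(i)}$ with pivot points $w_{j_1}, \ldots, w_{j_k}$ listed in order (so $j_1 < \cdots < j_k$ are the indices with $h_{j_\ell} \notin F_{i-1}$), the geodesic $[v_0, \rho_i(h) v_0]$ passes through $w_{j_1}, \ldots, w_{j_k}$ in order, and each of these points is distinct from both endpoints. As a side hypothesis, I simultaneously maintain that $W_{i-1}$ is a convex, $H_{i-1}$-invariant subtree of $\T$; this follows inductively because $H_i$ is generated by subgroups $R_v$ for $v \in N_i$, and in a tree any increasing union of translates of the connected set $N_i$, each glued to the previous along a shared vertex, is connected hence convex. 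The base case $i = 0$ is vacuous as there are no level-zero pivot points. For the base case $n = 1$ at level $i \geq 1$, the unique syllable must lie in some $R_{v_1}$ with $v_1 \in \calO_i \subseteq N_i \setminus W_{i-1}$, so $v_1 \neq v_0$, and the spinning lemma applied at $v_1$ with $u = v_0$ places the pivot $w_1 = v_1$ on $[v_0, \rho_i(h_1) v_0]$.

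For the inductive step, write $h = h' h_n$. If $h_n \in R_{v_n}$ for some $v_n \in \calO_i$, then the spinning lemma at $v_n$, translated by $\rho_i(h')$, places the new pivot $w_n = \rho_i(h') v_n$ on $[\rho_i(h') v_0, \rho_i(h) v_0]$; together with the inductively constructed geodesic from $v_0$ to $\rho_i(h') v_0$, these two segments meet at $\rho_i(h') v_0 \in \rho_i(h') W_{i-1}$ and exit along distinct edges (one staying inside $\rho_i(h') W_{i-1}$, the other leaving it toward $\rho_i(h') v_n \notin \rho_i(h') W_{i-1}$), so their union is geodesic. If instead $h_n \in F_{i-1}$, the level-$(i-1)$ inductive hypothesis tells us that $\rho_i(h_n)$ acts on $W_{i-1}$ and carries $v_0$ along a geodesic within it; translating by $\rho_i(h')$, this motion extends $[v_0, \rho_i(h')v_0]$ inside $\rho_i(h')W_{i-1}$ beyond its endpoint, leaving the last pivot $w_{j_k}$ untouched. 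The principal obstacle is this final bookkeeping: verifying that the asserted concatenations are genuinely geodesic rather than backtracking. This rests on convexity of $W_{i-1}$ as a subtree together with the tree-theoretic fact that distinct translates of $W_{i-1}$ appearing in $W_i$ intersect in at most one vertex, so attaching edges between successive segments are forced to be distinct.
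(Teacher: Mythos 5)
Your overall architecture matches the paper's: reduce to Proposition~\ref{prop:pivotway}, run a double induction on the level $i$ and on syllable length, and use the spinning condition at a fixed vertex to force geodesics to diverge; your ``spinning lemma in a tree'' is correct and is exactly the mechanism the paper uses. The gap is in the inductive step, where you peel off the last syllable $h = h'h_n$ and concatenate $[v_0, h'v_0]$ with $[h'v_0, hv_0]$ at the vertex $h'v_0$, asserting that the two segments leave $h'v_0$ along distinct edges because one ``stays inside $h'W_{i-1}$'' while the other ``leaves it toward $h'v_n$.'' The second assertion is false: the geodesic from $h'v_0$ to $h'h_nv_0$ does pass through the pivot $h'v_n$, but before reaching it, it must first travel \emph{inside} $h'W_{i-1}$ to the attachment point of $h'v_n$, and its initial edge can coincide with the terminal edge of $[v_0,h'v_0]$. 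Concretely: let $W_{i-1}$ be the path $v_0 - a - b$, let $v_1 \in \calO_i$ be attached to $a$ and $v_2 \in \calO_i$ to $b$, and take $h = h_1h_2$ with $h_j \in R_{v_j}$ nontrivial. Then $[v_0,h_1v_0] = (v_0,a,v_1,h_1a,h_1v_0)$ while $[h_1v_0,h_1h_2v_0]$ begins $(h_1v_0,h_1a,h_1b,h_1v_2,\dots)$, so the concatenation backtracks across the edge between $h_1a$ and $h_1v_0$. The theorem still holds here (the reduced geodesic $(v_0,a,v_1,h_1a,h_1b,h_1v_2,\dots)$ contains both pivots in order), but your argument does not establish it, and your closing appeal to distinct translates of $W_{i-1}$ meeting in at most one vertex does not address this failure mode; the second case of your inductive step ($h_n \in F_{i-1}$) suffers from the same backtracking problem.

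The paper sidesteps this by concatenating at the pivot vertex itself rather than at $h'v_0$: writing $h = h_\sigma h_k h_\tau$, it shows that the geodesics from $h_\sigma^{-1}v_0$ to $v_k$ and from $v_k$ to $h_\tau v_0$ each contain the unique edge $e$ joining $v_k$ to $W_{i-1}$, whereupon the spinning hypothesis $h_ke \neq e$ makes the concatenation geodesic at $v_k$ --- the one place where divergence is actually guaranteed. Carrying this out forces a point your write-up never engages with: one must show that the first pivot vertex of $h_\tau$ is distinct from $v_k$ (otherwise the geodesic from $v_k$ toward $h_\tau v_0$ need not re-enter $W_{i-1}$), and this is where Lemma~\ref{lem:rephrase2} and the inductive hypothesis that pivot points of lower-level elements are \emph{distinct} waypoints are used. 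To repair your proof you would need to restructure the induction around the pivot syllable in this way, or else prove a quantitative bound on how far the concatenation at $h'v_0$ can backtrack.
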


Theorem~\ref{thm:treecase} is a recasting of a standard fact from the Bass--Serre theory of group actions on trees.  Since the $R_v$ are vertex stabilizer subgroups, it follows from Bass--Serre theory that the quotient $\T/H$ is a tree and hence that $H$ is isomorphic to a free product
\[
\FreeProd{} R_{v_j}
\]
where $\{v_j\}$ is a set of orbit representatives for the action of $H$ on the set of vertices of $\T$ (see  \cite[Section 5.1]{Serre}).  Because $\T$ is a tree, we can find such a set of representatives by choosing a lift of $\T/H$ to $\T$.  The windmill data---more specifically the $\calO_i$---exactly describe such a set of representatives, as shown in the proof of Proposition~\ref{prop:rephrase}.

The proof that we give of Theorem~\ref{thm:treecase} can be thought of as a rephrasing of the standard argument from Bass--Serre theory.  This rephrasing models our proof of the more general Theorem~\ref{thm:spin}.  Before giving the proof, we require the following technical lemma, which is also required for the proof of Theorem~\ref{thm:spin}.  

\ip{A consequence of distinctness of waypoints} In the proof of Theorems~\ref{thm:treecase} and~\ref{thm:spin} we will show inductively that the set of pivot points $\W(h)$ for an element $h \in F^{(i)}$ are distinct waypoints.  The inductive argument will use the following technical lemma.

\begin{lemma}
\label{lem:rephrase2}
Let $\P$ be a projection complex, $G$ a group that acts on $\P$, and $\{R_v\}$ an equivariant family of subgroups.  Choose windmill data and let $\{(F_i,\rho_i)\}_{i=0}^\infty$ be the associated free product data.  Fix $i \geq 0$, and suppose the pivot points $\W(h)$ are distinct waypoints for each $h \in F_i$.  If $h \in F_i$ and $\grp{h}$ has a bounded orbit, then $h$ lies in $R_w$ for some $w \in W_i$.
\end{lemma}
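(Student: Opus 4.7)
The plan is to proceed by induction on $i \geq 0$. The base case $i = 0$ is immediate: $F_0 = R_{v_0}$ and $v_0 \in W_0$. For the inductive step, I will first reduce to the case of cyclically reduced $h \in F_i$ with respect to the free product structure of $F_i$. If $h = h_1 \cdots h_n$ is not cyclically reduced, then $h_n$ and $h_1$ lie in a common free factor (either both in $F_{i-1}$ or both in $R_v$ for the same $v \in \calO_i$), so conjugation by $h_1$ strictly shortens the syllable length; iterating produces $g \in F_i$ with $h' = ghg^{-1}$ cyclically reduced. Since $G$ acts by isometries on $\P$, bounded orbits for $\grp{h}$ and $\grp{h'}$ are equivalent; by the equivariance of $\{R_v\}$ together with the $H_i$-invariance of $W_i$, any witness $w' \in W_i$ with $h' \in R_{w'}$ will yield $h \in R_w$ with $w = \rho_i(g)^{-1} w' \in W_i$. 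So it suffices to treat cyclically reduced $h$.

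For cyclically reduced $h$ of syllable length $n$, I will analyze by cases. If $n = 0$ then $h$ is trivial. If $n = 1$ with the syllable in $F_{i-1}$, then $h \in F_{i-1}$ and the inductive hypothesis (whose pivot-point assumption on $F_{i-1}$ is inherited from the one on $F_i$) gives $h \in R_w$ with $w \in W_{i-1} \subseteq W_i$. If $n = 1$ with the syllable in $R_v$ for some $v \in \calO_i \subseteq W_i$, done. If $n \geq 2$, I will derive a contradiction to the bounded-orbit hypothesis. Since $h$ is cyclically reduced, the $k$-fold concatenation of its syllable decomposition is already the reduced form of $h^k$, which therefore has syllable length $kn$. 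The rule that no two $F_{i-1}$-syllables are adjacent forces at least $\lceil kn/2 \rceil \geq k$ of the syllables of $h^k$ to lie in some $R_v$ with $v \in \calO_i$, each contributing a pivot point for $h^k$. By hypothesis, these pivot points are distinct waypoints on every geodesic from $v_0$ to $h^k v_0$, so $d(v_0, h^k v_0) \geq k+1$, growing without bound and contradicting bounded orbits.

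The hardest step will be the cyclic reduction bookkeeping: I must carefully verify that conjugating by an element of $F_i$ converts a witness $w' \in W_i$ for $h'$ into a witness $w \in W_i$ for $h$, relying on both equivariance of $\{R_v\}$ under $G$ and the $H_i$-invariance of $W_i$. Beyond this, the argument is a direct linear-growth estimate: the distinct-waypoint hypothesis straightforwardly converts linear growth of the syllable length of $h^k$ into linear growth of $d(v_0, h^k v_0)$, which is all that is needed.
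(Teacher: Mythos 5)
Your proof is correct and follows essentially the same route as the paper's: both arguments convert the bounded orbit into a bound on the number of distinct waypoints of $h^k$, hence on the syllable length of $h^k$, and conclude that $h$ is conjugate in $F_i$ into a free factor $R_v$ with $v \in \calO_j$ for some $j \leq i$. Your version merely makes explicit two points the paper treats tersely, namely the cyclic-reduction bookkeeping and the induction on $i$ that handles the case where $h$ is conjugate into $F_{i-1}$.
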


\begin{proof}
Consider $h \in F_i$ at some level $j \leq i$.  If $\grp{h}$ has a bounded orbit, then the number of waypoints in $\W(h^n)$ is bounded, independently of $n$, and therefore the length of the syllable decomposition of $h^n \in F^{(j)}$ is also bounded, independently of $n$.  This means that $h$ is conjugate to an element with syllable length at most 1.  Since $h \in F^{(j)}$, that means $h$ lies in $gR_vg^{-1}$ with $v \in \cup_{j=0}^{i} \calO_j$ where $j \leq i$ and $g \in F_i$.  Therefore $h$ lies in $R_w$ for $w = gv$.  
\end{proof}

\begin{proof}[Proof of Theorem~\ref{thm:treecase}]

As in the definition of the windmill data, $W_0$ consists of a basepoint, which we denote $v_0$.  Also for $i \geq 1$, we denote the 1--neighborhood of $W_{i-1}$ in $\T$ by $N_i$ and $L_i = N_i \setminus W_{i-1}$.  Let $\{(F_i,\rho_i)\}_{i=0}^\infty$ be the free product data associated to the given windmill data, let $F$ denote the direct limit of the $F_i$, let $H$ denote the direct limit of the $H_i$, and let $F^{(i)}$ denote the $i$th level of $F$.  

As per Proposition~\ref{prop:pivotway}, it is enough to show for each $i \geq 1$ and each $h \in F^{(i)}$ that some element of $\W(h)$ is a waypoint.  To prove this for all $i \geq 1$, we prove the following stronger statement by induction on $i \geq 0$: for each element $h \in F_i$, the pivot points of $\W(h)$ are waypoints and they appear in order along the geodesic from $v_0$ to $hv_0$ (in particular, they are distinct, which will allow us to apply Lemma~\ref{lem:rephrase2}).  To mirror the proof of Theorem~\ref{thm:spin} in Section~\ref{sec:proof}, we break up the inductive hypothesis into two statements:
\begin{enumerate}
\item[(A)] If $h \in F_i$ and $w \in \W(h)$, then $w$ is a waypoint for $h$.
\item[(B)] If $h \in F_i$, the elements of $\W(h)$ appear in order along the geodesic from $v_0$ to $hv_0$.
\end{enumerate}

The base cases of (A) and (B), when $i=0$, are vacuous.  So henceforth assume that $i \geq 1$, and that our inductive hypothesis holds for all $h$ with level less than $i$.  We now give the inductive steps for (A) and (B) in turn.

\medskip \noindent \emph{Proof of (A).} It suffices to consider $h \in F^{(i)}$.  We consider a secondary induction on the syllable length $n$ of $h$.  So suppose that $h$ has syllable decomposition $h_1 \cdots h_n$ where $n \geq 0$.  For the secondary induction, our base case is $n=0$. Since $i \geq 1$ and the identity lies in $F^{(-1)}$, there is no element of $F^{(i)}$ with syllable length 0, and so the base case for the secondary induction is vacuous.

Assume now that $n \geq 1$.  Let $w_k \in \W(h)$.  The pivot point $w_k$ corresponds to some syllable $h_k$ that lies at level $i$.  Let $h_\sigma = h_1 \cdots h_{k-1}$ and $h_\tau = h_{k+1} \cdots h_n$ ($\sigma$ and $\tau$ stand for ``starting'' and ``terminal'').  Let $v_k$ denote the vertex of $\calO_i$ with $h_k \in R_{v_k}$.  With this notation in hand, we write
\[
h = h_\sigma h_k h_\tau
\qquad 
\text{ and so }
\qquad 
w_k = h_\sigma  v_k.
\]
Applying $h_\sigma^{-1}$ to all vertices, statement (A) is equivalent to the statement that $v_k$ lies strictly between $h_\sigma^{-1} v_0$ and $h_kh_\tau v_0$.  

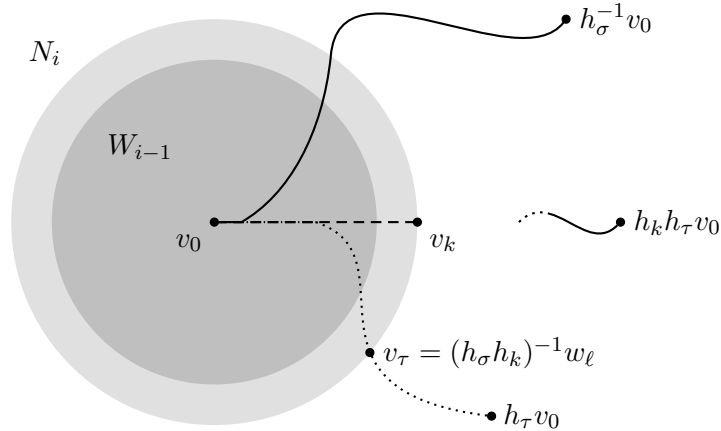
\begin{figure}
\begin{center}
\begin{tikzpicture}[scale=0.9]
\def\s{0.06}
\fill[black!12!white] (0,0) circle [radius=3cm];
\fill[black!25!white] (0,0) circle [radius=2.4cm];
\node at (-1.1,1.1) {$W_{i-1}$};
\node at (-2.5,2.5) {$N_{i}$};
\filldraw (0,0) circle (\s) node[inner sep=0,label=below left:{$v_0$}](v0) {};
\filldraw (3,0) circle (\s) node[inner sep=0,label=below right:{$v_k$}](vk) {};
\filldraw[rotate=-40] (3,0) circle (\s) node[inner sep=1,label=right:{$v_\tau = (h_\sigma h_k)^{-1}w_\ell$}](vt) {};
\filldraw (6,0) circle (\s) node[inner sep=1,label=right:{$h_k h_\tau v_0$}](hkht) {};
\filldraw[rotate=30] (6,0) circle (\s) node[inner sep=1,label=right:{$h^{-1}_\sigma v_0$}](hs) {};;
\filldraw[rotate=-35] (5,0) circle (\s) node[inner sep=0,label=right:{$h_\tau v_0$}](htv0) {};
\draw[thick,dash pattern=on 4 off 2.5 pt] (v0) -- (vk);
\draw[thick,dash pattern=on 4 pt off 1 pt on 0.5 pt off 1 pt] (v0) -- (1.5,0);
\draw[thick] (v0) -- (0.4,0) .. controls (1.1,0.4) and (1.6,1.3) .. (1.721,2.457) .. controls  (1.85,4) and (4.5,2.1) .. (hs);
\draw[thick,dotted] (1.5,0) .. controls  (3,-0.5) and (1,-2.5) .. (htv0);
\draw[thick,dash pattern=on 1 pt off 2 pt on 1 pt off 2pt on 1pt off 2pt on 1 pt off 2pt on 500pt] (4.5,0) .. controls (5,0.5) and (5.5,-0.5) .. (hkht);
\end{tikzpicture}
\end{center}
\caption{The vertices and paths used in the proof of Theorem~\ref{thm:treecase}.}
\label{fig:tree}
\end{figure}

Consider the diagram in Figure~\ref{fig:tree}, which shows the case when $h_\sigma, h_\tau \in F^{(i)}$.  There we have drawn the geodesics from $v_0$ to the vertices $h_\sigma^{-1} v_0$, $v_k$, and $h_\tau v_0$, respectively.  These paths are solid, dashed, and dotted, respectively.  The paths might overlap near $v_0$, as suggested by the picture.  Let $P$ be the geodesic from $h_\sigma^{-1} v_0$ to $v_k$ and let $Q$ be the geodesic from $v_k$ to $h_\tau v_0$.  Since $h_k$ fixes $v_k$, we have that $h_kQ$ is the geodesic path from $v_k$ to $h_kh_\tau v_0$ and moreover that the concatenation of $P$ and $h_kQ$ is a path from $h_\sigma^{-1} v_0$ to $h_kh_\tau v_0$ that contains $v_k$.  

To prove statement (A), then, it is enough to prove that both $P$ and $Q$ are nontrivial and that the concatenation of $P$ and $h_kQ$ is a geodesic.  To do this, it is enough to show that both $P$ and $Q$ contain the (unique) edge $e$ connecting $v_k$ to $W_{i-1}$; equivalently that $P$ and $Q$ each contain a vertex of $N_i$ other than $v_k$.  It then follows from the spinning hypothesis that $h_k(e) \neq e$, and hence that the concatenation of $P$ and $h_kQ$ is geodesic near $v_k$, hence is a geodesic.  We treat the case of $Q$, with the case of $P$ being the essentially the same.  

There are two cases, according to whether $h_\tau$ lies in $F_{i-1}$ or in $F^{(i)}$.  In the first case, we have that $h_\tau v_0$ lies in $W_{i-1}$.  Since $h_\tau v_0$ is a vertex of $Q$, this completes the proof in the first case.  

The second case is where $h_\tau$ lies in $F^{(i)}$.  Let $v_\tau$ be the first pivot point for $h_\tau$.  We claim that $v_\tau$ lies on $Q$ (as shown in Figure~\ref{fig:tree}).  Indeed, by the induction on syllable length applied to $h_\tau$, we have that $v_\tau$ lies on the geodesic from $v_0$ to $h_\tau v_0$.  Further, since $v_\tau$ is a leaf of the tree $N_i$, and since $v_0$ lies in $N_i$, any geodesic from a vertex of $N_i$ to $h_\tau v_0$ contains $v_\tau$, in particular the path $Q$ contains $v_\tau$ (recall that $Q$ contains $v_k \in N_i$).  Thus, to show that $Q$ contains a vertex of $N_i$ other than $v_k$, it suffices to show the following claim.

\medskip \noindent \emph{Claim.}  The vertex $v_\tau$ is distinct from $v_k$.\medskip

\noindent From the definitions, $v_\tau$ is the fixed point of the first syllable of $h_\tau$ that is not contained in $F_{i-1}$.  If this syllable is $h_{k+1}$, then by the definition of syllable decomposition we have $v_k \neq v_{k+1} = v_\tau$.  

The other case of the claim is where $h_{k+1} \in F_{i-1}$.  In this case we must show that $v_k \neq h_{k+1}v_{k+2} = v_\tau$.  Assume for the sake of contradiction that $v_k = h_{k+1}v_{k+2}$.  To obtain the contradiction, we will show that $h_{k+1}$ fixes $v_k$, that is also lies in some $R_{w}$, and further that $w \neq v_k$.  Together, these three items violate the spinning hypothesis.

Since $v_k$ and $v_{k+2}$ are vertices of $\calO_i$, which is a set of orbit representatives for the action of $H_{i-1}$ on $L_i$, it follows from the assumption $v_k = h_{k+1}v_{k+2}$ that $v_k = v_{k+2}$.  Thus $v_k$ is fixed by $h_{k+1}$, which is the first item.  By the inductive hypothesis that all pivot points are waypoints for elements in $F_{i-1}$ and since $h_{k+1}$ fixes $v_k$, it follows from Lemma~\ref{lem:rephrase2} that $h_{k+1}$ lies in some $R_{w}$ with $w \in W_{i-1}$, which is the second item.  Since $v_k$ lies in $L_i$,  and since the latter is disjoint from $W_{i-1}$, we obtain the third item. This completes the proof of the claim, hence (A).

\medskip 

\medskip \noindent \emph{Proof of (B).}   Take $h \in F^{(i)}$ as in the proof of (A).  It suffices to show that if $w_k$ is an element of $\W(h)$ that is not last in the order, and if $w_\ell$ is the immediately following element of $\W(h)$, then $w_\ell$ lies strictly between $w_k$ and $hv_0$ in $\T$.    As above, we fix the decomposition $h_\sigma h_k h_\tau$ of $h$ and we take $v_k$ to be the vertex of $\T$ associated to $h_k$.  We have that $w_\ell = h_\sigma h_k v_\tau$ where $v_\tau$ is the first pivot point for $h_\tau$.   We may assume that $h_\tau$ lies in $F^{(i)}$, for otherwise $w_k$ would be the final pivot point.  Similarly to our restatement of (A), we may apply $(h_\sigma h_k)^{-1}$ to all vertices in statement (B) in order to obtain the equivalent statement that $v_\tau$ lies strictly between $v_k$ and $h_\tau v_0$.  This is further equivalent to the statement that $v_\tau$ lies in the interior of $Q$, which is the geodesic from $v_k$ and $h_\tau v_0$.  We observed this fact above, and so the proof is complete.
\end{proof}

\section{Proof of main technical theorem}\label{sec:proof}

In this section we prove our main technical theorem, Theorem \ref{thm:spin}.  This theorem states that if a group $G$ acts on a projection complex $\P$ with an equivariant $L$--spinning family of subgroups $\{R_v\}_{v\in\P}$ and $L$ is sufficiently large, then the normal subgroup generated by $\{R_v\}_{v\in\P}$ is a free product of some of the $R_v$.

Our proof comes after a lemma that speaks to the difference between Theorem \ref{thm:spin} and the special case considered in Theorem~\ref{thm:treecase} about trees.  The proofs of both theorems use windmills in virtually the same way, but the general case is complicated by the fact that windmills in a general projection complex need not be convex.  Rather, we have Lemma~\ref{Theta} below, which is critical to our proof.  

\p{A convexity-like property of windmills in projection complexes} In the statement of the following lemma, $\Kp$ is the path constant associated to $\P$, as in Subsection~\ref{subsec:def and prop}.  

\begin{lemma}\label{Theta}
Let $\P$ be a projection complex, $G$ a group that acts on $\P$, and $\{R_v\}$ an equivariant family of subgroups with windmill data $\{(H_i,W_i,\calO_i)\}_{i=0}^{\infty}$.  If $x$ and $y$ are vertices of $W_{i-2}$ and $v \notin W_i$, then $d_v(x,y) \leq \Kp$.
\end{lemma}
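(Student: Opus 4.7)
The plan is to invoke the bounded path image property for the vertex $v$: it will suffice to produce a path from $x$ to $y$ in $\P$ whose vertices all stay outside the 2--neighborhood of $v$, and then $d_v(x,y) \leq \Kp$ follows immediately. Since each $W_j$ is connected (as noted in Section~\ref{sec:data}), a natural candidate presents itself: any path $\gamma$ from $x$ to $y$ whose vertices all lie in $W_{i-2}$.

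The main step is then to show that the 2--neighborhood in $\P$ of $W_{i-2}$ is contained in $W_i$. First I would record the easy observation that $N_j \subseteq W_j$ for every $j$, since $W_j = H_j \cdot N_j$ and the identity lies in $H_j$. The 1--neighborhood of $W_{i-2}$ is $N_{i-1}$ by definition, and this is contained in $W_{i-1}$ by the previous sentence. Taking one more 1--neighborhood, the 2--neighborhood of $W_{i-2}$ is contained in the 1--neighborhood of $W_{i-1}$, which is $N_i$, and hence in $W_i$.

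Taking the contrapositive of this containment, the hypothesis $v \notin W_i$ forces $v$ to be at distance at least $3$ from every vertex of $W_{i-2}$. In particular every vertex of $\gamma$ lies outside the 2--neighborhood of $v$, so $\gamma$ is an admissible path for the bounded path image property and the conclusion $d_v(x,y) \leq \Kp$ follows.

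There is not really a subtle obstacle here: the content of the lemma is essentially a bookkeeping statement about how the windmill strata are nested, namely that $W_{i-2}$ is insulated from the complement of $W_i$ by two layers of 1--neighborhoods. This is the closest analogue available in a projection complex to the convexity of subtrees that made the argument in Section~\ref{sec:trees} so straightforward, and it is exactly the ingredient needed to replace convexity in the proof of Theorem~\ref{thm:spin}.
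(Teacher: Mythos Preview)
Your proof is correct and follows essentially the same approach as the paper. The paper's argument is terser---it simply asserts that each $W_j$ contains the $1$--neighborhood of $W_{j-1}$ and hence any vertex outside $W_i$ is at distance at least $3$ from $W_{i-2}$, then invokes connectedness of $W_{i-2}$ and the bounded path image property---but you have unpacked exactly this chain of containments $N_{i-1} \subseteq W_{i-1}$, $N_i \subseteq W_i$ in more detail.
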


\begin{proof}

For $i \geq 2$, the distance between any vertex of $\P \setminus W_i$ and any vertex of $W_{i-2}$ in $\P$ is at least 3.  This follows from the definitions, since each $W_i$ contains the 1-neighborhood of $W_{i-1}$.  Because $W_{i-2}$ is connected, the lemma now follows from the bounded path image property.
\end{proof}

\begin{proof}[Proof of Theorem~\ref{thm:spin}]

As in the statement, $G$ is a group acting on a projection complex $\P$.  Let $\theta$ be the constant from the definition of $\P$, and let $\Ke$, $\Kg$, and $\Kp$ be the associated constants from Section~\ref{subsec:def and prop}, all of which ultimately depend on $\theta$.  We will prove the theorem for $L(\P) = 3(11\Ke + 6\Kg + 5\Kp + \theta) + 1$.  In other words, we assume that $G$ has an equivariant $L$--spinning family of subgroups $\{R_v\}$ with $L \geq L(\P)$, and show that the normal closure of the union of the $R_v$ can be decomposed into the free product described in the statement.  As in Section~\ref{sec:windmillsprojection}, let $\{H_i, W_i,\calO_i\}$ be windmill data associated to the action of $G$ on $\P$ and let $\{F_i,\rho_i\}$ be the associated free product data.  As in the definition of the windmill data, $W_0$ consists of a basepoint, which we denote $v_0$.  Also for $i \geq 1$, we denote the 1--neighborhood of $W_{i-1}$ in $\P$ by $N_i$ and $L_i = N_i \setminus W_{i-1}$. 

Let $m = 11\Ke + 6\Kg + 5\Kp$; in particular $L(\P) = 3(m+\theta) + 1$.  To prove the theorem we will show by induction on $i$ that the following statements hold for all $i \geq 0$.

\begin{itemize}
\item[(A)] If $h \in F_i$ and $w \in \W(h)$, then $d_w(v_0,hv_0) > m+\theta$.
\item[(B)] If $h \in F_i$, the elements of $\W(h)$ appear in order along any geodesic from $v_0$ to $hv_0$.
\item[(C)] For $x \in N_{i+1}$ and $v \notin W_{i}$ with $v\neq x$, we have $d_v(v_0,x) \leq m$.
\end{itemize}

Respectively, these statements say that pivot points are waypoints, waypoints are distinct, and vertices outside a given $W_i$ see bounded projection between points inside $W_i$ (even $N_{i+1}$), which strengthens the convexity-like property of Lemma~\ref{Theta}.  The precise relevance of each statement to the proof is as follows.  By the bounded geodesic image property, statement (A) implies that the pivot points of $h$ are waypoints, since $m +\theta \geq \Kg$.  By Proposition~\ref{prop:pivotway}, this will complete the proof of the theorem, once verified for all $i$.  Statement (B) further implies that the waypoints are distinct, which will allow us to apply Lemma~\ref{lem:rephrase2} in the same way as in the proof of Theorem~\ref{thm:treecase}.  Statement (C) enables induction, as in effect it upgrades to $W_i$ the $W_{i-2}$ in Lemma~\ref{Theta}.

For the base case of our induction on $i$, we consider the level $i=0$.  Statements (A) and (B) are vacuous.  The hypotheses of (C) imply that $v \notin \{x,v_0\}$.  Because $d(v_0,x)=1$, we have $d_v(v_0,x)\leq \Ke \leq m$, where $\Ke$ is the edge constant from Section \ref{subsec:def and prop}.

Now consider the level $i\geq 1$ and assume the three statements hold for smaller levels.  We prove each statement in turn.  First, we prove (A) by our assumption on lower levels, and inducting on the syllable length of $h$.  Then we upgrade (A) to (B).  These are key to proving the new level of (C), which is in turn used to prove the next level of (A) and (B).

\medskip \noindent \emph{Proof of (A).}  It suffices to consider $h \in F^{(i)}$.  We begin a second induction on the syllable length of $h$, using the decomposition $h=h_1\cdots h_n$ described in Section \ref{sec:pivot-way}.  For the base case of the induction on syllable length, suppose $n=0$.  Since $i \geq 1$ and the identity lies in $F^{(-1)}$, there is no element of $F^{(i)}$ with syllable length 0, and so the base case for the secondary induction is vacuous.    

Now suppose that $n \geq 1$ and that (A) holds for elements of $F^{(i)}$ with syllable length less than $n$.  Let $w_k \in \W(h)$ be a pivot point for $h$, so $w_k=h_1\cdots h_{k-1}v_k$ where $v_k \in \calO_i$ is fixed by $h_k$.   As in Section~\ref{sec:trees}, let $h=h_\sigma h_k h_\tau$, where $h_\sigma$ and $h_\tau$ are possibly trivial subwords.  
  
\medskip \noindent {\it Step 1: The quantity $d_{w_k}(v_0,hv_0)$ is defined, as are $d_{v_k}(v_0,h_\sigma^{-1}v_0)$ and $d_{v_k}(v_0,h_\tau v_0)$.}  We will use the latter two quantities in Step 2.  The quantity $d_{w_k}(v_0,hv_0)$ is defined if and only if $w_k \notin \{v_0,hv_0\}$.  We start by showing that $w_k \neq hv_0$.  There are two cases: either $h_\tau \in F_{i-1}$ or  $h_\tau \in F^{(i)}$.  In the first case, we have $h_\tau v_0 \in F_{i-1} \cdot N_{i-1} = W_{i-1}$.  As $v_k \in L_i$, which is disjoint from $W_{i-1}$, we have $v_k \neq h_\tau v_0$ and therefore $w_k = h_\sigma v_k = h_\sigma h_k v_k \neq hv_0$ as desired.
  
The second case is where $h_\tau$ lies in $F^{(i)}$.  Let $v_\tau$ be the first pivot point for $h_\tau$.  By induction on $n$, statement (A) gives $d_{v_\tau}(v_0, h_\tau v_0) > m + \theta$.  We observe that both $v_\tau$ and $v_k$ are elements of $L_i$.  We claim that $v_\tau \neq v_k$.  This is exactly the claim from the proof of Theorem~\ref{thm:treecase}, whose proof uses the inductive hypothesis that statements (A) and (B) are true for the lower levels to invoke Lemma~\ref{lem:rephrase2}.  In this context the same proof works after replacing ``spinning'' with ``$L$--spinning.''  Hence we may apply statement (C) inductively (with respect to $i$) to conclude $d_{v_\tau}(v_0,v_k) \leq m$.  Since we have shown that $d_{v_\tau}(v_0, h_\tau v_0) > m + \theta$ and that $d_{v_\tau}(v_0,v_k) \leq m$, it follows that $v_k \neq h_\tau v_0$ and therefore, as in the first case, we have $w_k \neq hv_0$.  

To confirm that $d_{w_k}(v_0,hv_0)$ is defined, it remains to show that $w_k \neq v_0$.  To see this, we first observe that $h^{-1}w_k$ is a pivot point for $h^{-1}$.  Hence the above argument shows that $h^{-1}w_k \neq h^{-1}v_0$ and thus $w_k \neq v_0$ as well.    

To address the quantities $d_{v_k}(v_0,h_\sigma^{-1}v_0)$ and $d_{v_k}(v_0,h_\tau v_0)$, we observe that $v_k$ is a pivot point for $h_k h_\tau$, which lies in $F^{(i)}$ and has syllable length at most $n$.  Thus, applying the same argument above to $h_kh_\tau$ instead of $h$, we deduce that $v_k \notin \{v_0,h_k h_\tau v_0\}$.  In particular, $v_k \neq v_0$.  Further, since $v_k$ is fixed by $h_k$ we also have $v_k \neq h_\tau v_0$.  Similarly we have $v_k \notin \{v_0,h_\sigma^{-1}v_0\}$ as well. 

\medskip \noindent {\it Step 2: We have $d_{w_k}(v_0,h v_0) > m + \theta$.} Using the invariance of the distance functions under the group action, the triangle inequality, and the $L$--spinning hypothesis, we have 
\begin{equation*}\label{eqw}
\begin{split}
d_{w_k}(v_0,hv_0) &= d_{v_k}(h_\sigma^{-1} v_0,h_kh_\tau v_0) \\
&\geq d_{v_k}(v_0,h_kv_0) - d_{v_k}(v_0,h_\sigma^{-1} v_0) - d_{v_k}(h_kv_0,h_kh_\tau v_0) \\
&\geq L - (d_{v_k}(v_0,h_\sigma^{-1} v_0) + d_{v_k}(v_0,h_\tau v_0)).
\end{split}
\end{equation*}

We will show that $d_{v_k}(v_0,h_\tau v_0) \leq m + \theta$.  Since $h_\sigma^{-1}$ has functionally identical features (replacing $h$ with $h^{-1}$), the same argument also shows that $d_{v_k}(v_0,h_\sigma^{-1} v_0) \leq m + \theta$.  Incorporating this into above inequality, we will conclude that
\begin{equation}
d_{w_k}(v_0,hv_0) \geq d_{v_k}(v_0,h_kv_0) - 2(m + \theta) \geq L - 2(m+\theta) > m+\theta, \label{eq:callout}
\end{equation}
which will complete the proof of (A).

Towards bounding $d_{v_k}(v_0,h_\tau v_0)$, there are the same two cases as in Step 1: either $h_\tau \in F_{i-1}$ or $h_\tau \in F^{(i)}$.  In the first case, we observed in Step 1 that $h_\tau v_0 \in W_{i-1} \subseteq N_i$, $v_k \notin W_{i-1}$ and that $v_k \neq h_\tau v_0$.  Thus by induction on level $i$, statement (C) implies that $d_{v_k}(v_0,h_\tau v_0)\leq m \leq m + \theta$.  

It remains to consider the second case.  Let $v_\tau$ be the first pivot point in $\W(h_\tau )$.  In this case, we have already observed in Step 1 that $d_{v_\tau}(v_0,h_\tau v_0) > m + \theta$ and $d_{v_\tau}(v_0,v_k)\leq m$.  We thus have
\[d_{v_\tau}(v_k,h_\tau v_0) \geq d_{v_\tau}(v_0,h_\tau v_0) - d_{v_\tau}(v_0,v_k) > (m+\theta) - m > \theta.  \]
Using the inequality on triples, we obtain $d_{v_k}(v_\tau,h_\tau v_0) \leq \theta$.  As shown in Step 1, $v_k$ and $v_\tau$ are distinct vertices of $L_i$, we apply statement (C) inductively (with respect to $i$) to obtain that $d_{v_k}(v_0,v_\tau)\leq m$.  We thus have
\[d_{v_k}(v_0,h_\tau v_0) \leq d_{v_k}(v_\tau,h_\tau v_0) + d_{v_k}(v_0,v_\tau) \leq \theta + m,\]
completing the proof of (A).

\medskip \noindent \emph{Proof of (B).}  Recall that $\W(h)=\{w_j\}_{j \in I}$ is an ordered set with index set $I$ a (possibly proper) subset of $\{1,\ldots,n\}$. 
Our goal is to show that these vertices appear in order on any geodesic from $v_0$ to $hv_0$.  

Let $w_k,w_\ell \in\W(h)$ be consecutive, so $w_k = h_\sigma v_k$ and $w_\ell = h_\sigma h_k  v_\tau$ where $v_\tau$ is the first pivot point of $h_\tau$, again using the syllable decomposition $h = h_\sigma h_k h_\tau$.  As above, we have that $v_k \neq v_\tau$ and thus $w_k \neq w_\ell$.  Using the triangle inequality, statement (A) and invariance of the distance functions, we have
\[d_{w_k}(v_0,w_\ell) \geq d_{w_k}(v_0,hv_0) - d_{w_k}(w_\ell,hv_0) \geq m+\theta - d_{v_k}(v_\tau,h_\tau v_0). \]
As in the proof of (A), using the inequality on triples, we may deduce that $d_{v_k}(v_\tau,h_\tau v_0)\leq \theta$, and therefore $d_{w_k}(v_0,w_\ell)\geq m > \Kg$.  By the bounded geodesic image property, any geodesic from $v_0$ to $w_\ell$ passes through $w_k$.  

\medskip \noindent \emph{Proof of (C).}  Recall we are given $v \notin W_i$ and $x\in N_{i+1}$ such that $x \neq v$, and our goal is to bound $d_v(x,v_0)$ by $m = 11\Ke + 6\Kg + 5\Kp$.  There is an $x_0 \in W_i$ with $d(x,x_0) \leq 1$.  If there is a geodesic from $x_0$ to $v_0$ that avoids $v$---in particular if $W_i$ is convex---then by the bounded edge and geodesic image properties we have
\[
d_v(v_0,x) \leq d_v(v_0,x_0) + d_v(x_0,x) \leq \Kg + \Ke \leq m.
\]
However, it might be the case that every geodesic from $x_0$ to $v_0$ passes through $v$.  In this case, we build a path from $x_0$ to $v_0$ consisting of: at most $10$ edges avoiding $v$, at most $6$ geodesics avoiding $v$, and at most $5$ paths that each lie in some $F_i$--translate of $W_{i-2}$.  Since $v \notin W_i$ and $W_i$ is $F_i$--invariant, the projection of any path in an $F_i$--translate of $W_{i-2}$ to $v$ is bounded by $\Kp$ by Lemma~\ref{Theta}.  Using this fact and the bounded edge and geodesic image properties, it follows that
\[
d_v(v_0,x) \leq d_v(v_0,x_0) + d_v(x_0,x) \leq (10\Ke + 6\Kg + 5\Kp) + \Ke = m.
\]
Figure~\ref{fig:proofC} illustrates the idea.  In the rest of the proof we refer to edges avoiding $v$, geodesics avoiding $v$, and paths lying in an $F_i$--translate of $W_{i-2}$ as paths of type e, g, and p, respectively.  Also, we assume throughout that $i > 1$; the case $i=1$ follows the same idea but is simpler.

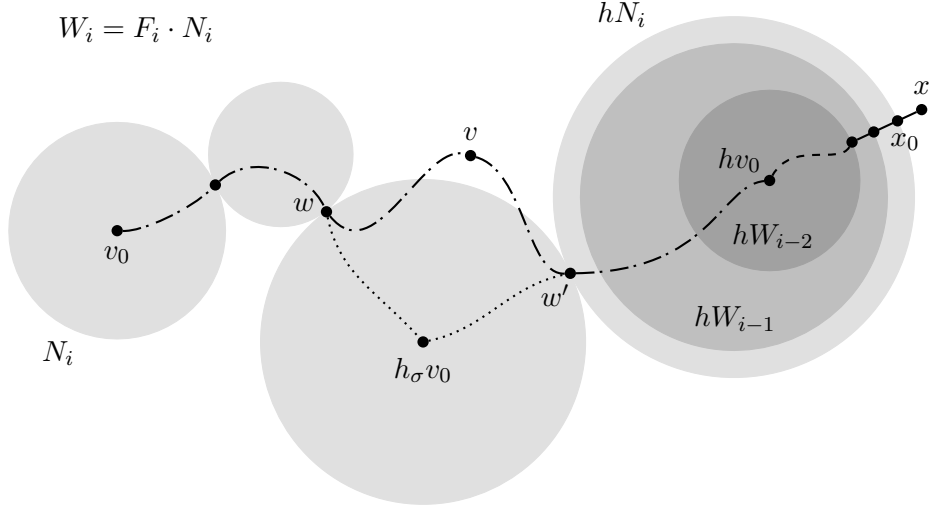
\begin{figure}
\begin{center}
\begin{tikzpicture}
\begin{scope}[rotate=-20]
\def\r{2.4}
\tikzset{vertex/.style={fill,black,circle,minimum size=0.15cm,inner sep=0pt,outer sep =0pt}}
\filldraw[black!12!white] (${sqrt(2)}*(0.2*\r,0)$) circle [radius=0.6*\r cm];
\filldraw[black!12!white] (${sqrt(2)}*(0.7*\r,0.5*\r)$) circle [radius=0.4*\r cm];
\filldraw[black!12!white] (${sqrt(2)}*(1.475*\r,0)$) circle [radius=0.9*\r cm];
\filldraw[black!12!white] (${sqrt(2)}*(2.425*\r,0.95*\r)$) circle [radius=\r cm];
\filldraw[black!25!white] (${sqrt(2)}*(2.425*\r,0.95*\r)$) circle [radius=0.85*\r cm];
\filldraw[black!37!white] (${sqrt(2)}*(2.425*\r,0.95*\r)+(0.37,0.37)$) circle [radius=0.5*\r cm];
\node[vertex,label=below:{$v_0$}](v) at (${sqrt(2)}*(0.2*\r,0)$) {};
\node[vertex,label={[shift={(-0.3,-0.2)}]$w$}](w) at (${sqrt(2)}*(0.945*\r,0.35*\r)$) {};
\node[vertex,label={[shift={(-0.2,-0.65)}]$w'$}](w') at (${sqrt(2)}*(1.925*\r,0.45*\r)$) {};
\node[vertex,label=below:{$h_\sigma v_0$}](hsv) at (${sqrt(2)}*(1.475*\r,0)$) {};
\node[vertex,label={[shift={(-0.4,-0.1)}]$hv_0$}](hv) at (${sqrt(2)}*(2.425*\r,0.95*\r)+(0.37,0.37)$) {};
\node[vertex,label=above:{$x$}](x) at (${sqrt(2)}*(2.925*\r,1.45*\r)+(0.25,0.25)$) {};
\node[vertex,label={[shift={(0.1,-0.6)}]$x_0$}](x0) at (${sqrt(2)}*(2.925*\r,1.45*\r)$) {};
\node[vertex](x1) at (${sqrt(2)}*(2.425*\r,0.95*\r)+(1.45,1.45)$) {};
\node[vertex](x2) at (${sqrt(2)}*(2.425*\r,0.95*\r)+(1.225,1.225)$) {};
\draw[thick] (x) -- (x2);
\draw[thick,dashed] (x2) .. controls (9.4,4.1) and (8.7,4.2) .. (hv);
\node[vertex] (w0) at (${sqrt(2)}*(0.5*\r,0.3*\r)$) {};
\node[vertex,label=above:{$v$}](vv) at (${sqrt(2)}*(1.4*\r,0.75*\r)$) {};
\draw[thick,dash pattern={on 7pt off 2pt on 1pt off 3pt}] (v) .. controls (1,0.1) and (1.5,0.6) .. (w0);
\draw[thick,dash pattern={on 7pt off 2pt on 1pt off 3pt}] (w0) .. controls (2,1.6) and (2.8,1.6) .. (w);
\draw[thick,dash pattern={on 7pt off 2pt on 1pt off 3pt}] (w) .. controls (4.1,0.65) and (4.1,2.5) .. (vv);
\draw[thick,dash pattern={on 7pt off 2pt on 1pt off 3pt}] (vv) .. controls (5.5,2.6) and (6,1.3) .. (w');
\draw[thick,dash pattern={on 7pt off 2pt on 1pt off 3pt}] (w') .. controls (8.3,2.2) and (8,3.3) .. (hv);
\draw[thick,dotted] (w) to[out=-60,in=150] (hsv);
\draw[thick,dotted] (hsv) to[out=30,in=-150] (w');
\node at (6,5) {$h N_i$};
\node at (0,2.6) {$W_i = F_i \cdot N_i$};
\node at (8.8,1.7) {$h W_{i-1}$};
\node at (8.9,2.9) {$h W_{i-2}$};
\node at (0.5,-1.8) {$N_i$};
\end{scope}
\end{tikzpicture}
\caption{The picture for steps 1 and 2 of statement (C) in the proof of Theorem~\ref{thm:spin}.}
\label{fig:proofC}
\end{center}
\end{figure}

\medskip \noindent {\it Step 1: Connect $x_0$ to $hv_0$ for some $h \in F_i$.}  Using the definition of a windmill, there is an $h \in F_i$, $x' \in W_{i-2}$ and a path of length 2 from $x_0$ to $hx'$ that does not include $v$ (two paths of type e).  As $W_{i-2}$ is connected, there is a path in $hW_{i-2}$ from $hv_0$ to $hx'$ (path of type p). 

\medskip \noindent {\it Step 2: Replace a segment of a geodesic from $v_0$ to $hv_0$ if it passes through $v$.}  If there is a geodesic from $v_0$ to $hv_0$ avoiding $v$ we are done, since this is a path of type g.  Otherwise, $v$ lies on every geodesic from $v_0$ to $hv_0$.  Fix such a geodesic.  There are consecutive points $w,w' \in \W(h) \cup \{v_0,hv_0\}$ such that $v$ lies between $w$ and $w'$ on this geodesic (extend the order on $\W(h)$ so $v_0$ is minimal and $hv_0$ is maximal).  The (possibly trivial) sub-geodesics from $v_0$ to $w$ and from $w'$ to $hv_0$ are of type g.  By definition of the pivot points, $w$ and $w'$ are in $h_\sigma N_i$ for an initial subword $h_\sigma$ of $h$.  We replace the segment of the geodesic from $w$ to $w'$ by a concatenation of geodesics from $w$ to $h_\sigma v_0$ and from $h_\sigma v_0$ to $w'$.  If both of these geodesics avoid $v$ (so they are of type g), then we are done.  Suppose, on the contrary, that one or both passes through $v$.  We treat the case that the geodesic from $w$ to $h_\sigma v_0$ passes through $v$; the other case is identical.

\medskip \noindent {\it Step 3: Connect $w$ to $h_\sigma h'v_0$ for some $h' \in F_{i-1}$.}  As $w \in h_\sigma N_i$, this is  identical to Step 1 with $h$, $x_0$, and $hv_0$ replaced by $h_\sigma$, $w$, and $h_\sigma h'v_0$.  Like Step 1, this step contributes two paths of type e and at most one path of type p.

\medskip \noindent {\it Step 4: Replace a segment of a geodesic from $h_\sigma v_0$ to $h_\sigma h'v_0$ if it is passes through $v$.}  We proceed as in Step 2.  The geodesic splits into two paths of type g and geodesic from $u$ and $u'$ that lie in $h_\sigma h'_\sigma N_{i-1}$ for some initial subword $h'_\sigma$ (here $u$ and $u'$ are analogous to $w$ and $w'$ from Step 2).  There is a path from $u$ to $u'$ consisting of 2 paths of type e and a path of type p.
\end{proof}

\section{Proof of the pseudo-Anosov case}\label{sec:niwpd-proof}

In this section we prove Theorem~\ref{thm:base}, which states that if a group $G$ acts on a hyperbolic space $X$ and 
$\{f_1,\ldots,f_m\} \subseteq G$ is a normally independent collection of NEC WPD elements, then for any $t \geq 0$ there is an $N$ so that for $n \geq N$ the group $\normalclosure{f_1^n,\dots,f_m^n}_{G}$ has the following properties:
\begin{enumerate}
\item it is isomorphic to $F_\infty$ and has a free basis consisting of conjugates of the $f_i^n$, and
\item the $X$--translation length of each nontrivial element is at least $t$.
\end{enumerate}
The main application in this paper is the case where the $f_i$ are pseudo-Anosov elements of the mapping class group and $X$ is the curve complex.  

The first statement of Theorem~\ref{thm:base} will be proved by applying Theorem~\ref{thm:spin} to the projection complex produced by Proposition~\ref{prop:many}.  To prove the second statement, we will need to understand translation lengths in terms of projections.  Before giving the proof of the theorem, we introduce two tools.

\ip{A distance formula for WPD elements} The following proposition gives a lower bound on distance in a hyperbolic space, in terms of projections to quasi-axis bundles.   In the statement, $d_{\alpha}$ is the distance function defined in Section~\ref{sec:eg}.   Also, for a constant $M \geq 0$ and $x \in \R$ we define $\cut{x}_{M}$ to be $x$ if $x \geq M$ and $0$ otherwise.    

\begin{proposition}\label{prop:mm-hyperbolic}
Let $X$ be a hyperbolic metric space, let $G$ be a group acting on $X$, and let $\{f_1,\dots,f_m\} \subseteq G$ be a collection of WPD elements.  Let $\A$ be the collection of quasi-axis bundles in $X$ for the $G$--conjugates of the $f_i$.  Then there exists a constant $M$ such that for any $x,y \in X$, we have
\begin{equation*}
d(x,y) \geq \frac{1}{6} \sum _{\alpha \in \A} \cut{d_{\alpha}(x,y) }_{M}.
\end{equation*}
\end{proposition}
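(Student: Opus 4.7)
My plan is to establish this distance formula by tracking a geodesic $\gamma$ from $x$ to $y$ in $X$ and accounting for the ``time'' $\gamma$ spends fellow-traveling each quasi-axis bundle $\alpha$ with large $d_\alpha(x,y)$. The core geometric principle is standard: in a $\delta$-hyperbolic space, a geodesic whose endpoints project far apart on a quasi-convex set must travel alongside that set for a length comparable to the projection distance. Since each $\alpha \in \A$ is an $\EC(f_i)$-orbit of a $\grp{f_i}$-orbit, it is a uniform quasi-geodesic, hence uniformly quasi-convex in $X$; the finiteness of $\{f_1,\dots,f_m\}$ together with $G$-equivariance of the setup gives uniform quasi-convexity constants across all of $\A$.

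First, I would establish a tracking lemma: there are constants $M_0$, $C$, and $D$ depending only on $\delta$ and the quasi-convexity constants such that whenever $d_\alpha(x,y) \geq M_0$, the geodesic $\gamma$ contains a subsegment $\gamma_\alpha$ of length at least $d_\alpha(x,y) - C$ that lies in the $D$-neighborhood of $\alpha$, connecting points close to $\pi_\alpha(x)$ and $\pi_\alpha(y)$ along $\gamma$. This is the usual ``thin quadrilateral'' argument. Next, I would prove a near-disjointness statement: for distinct $\alpha,\beta \in \A$ with both projection distances sufficiently large, the subsegments $\gamma_\alpha$ and $\gamma_\beta$ overlap in a segment of uniformly bounded length. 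This follows from the inequality on triples for the projection functions from Proposition~\ref{prop:many}, since a substantial overlap would force both $d_\alpha(\beta,y)$ and $d_\beta(\alpha,y)$ to exceed $\theta$, violating the axiom; more precisely, one shows the endpoints of $\gamma_\beta$ lying in the $D$-neighborhood of $\alpha$ project to a bounded-diameter subset of $\alpha$, so the tracking window for $\beta$ is disjoint (up to a bounded amount) from that for $\alpha$.

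Finally, I would choose $M \geq M_0$ large enough that the additive error $C$ together with the uniformly bounded pairwise overlaps can be absorbed into a definite fraction of each $d_\alpha(x,y)$. Summing the lengths of the $\gamma_\alpha$ over all $\alpha$ with $d_\alpha(x,y) \geq M$ then yields
\begin{equation*}
d(x,y) \;=\; \mathrm{length}(\gamma) \;\geq\; \frac{1}{6}\sum_{\alpha\in\A}\cut{d_\alpha(x,y)}_M.
\end{equation*}
The constant $1/6$ is generous; any fixed fraction smaller than $1$ would do, at the cost of a larger $M$. The main obstacle will be the bookkeeping of constants: verifying the tracking lemma with dependencies only on $\delta$ and the quasi-convexity constants (so that a single $M$ works for all of $\A$ simultaneously), and controlling the pairwise overlaps uniformly. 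This is a routine but careful hyperbolic-geometric computation in the style of Bestvina--Bromberg--Fujiwara~\cite{ar:BBF15}, and indeed the proposition can be viewed as a generalization to multiple orbits of results already implicit in their work and in Dahmani--Guirardel--Osin~\cite{ar:DGO17}.
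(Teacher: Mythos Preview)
The paper does not actually prove this proposition: immediately after the statement it remarks that the inequality ``is surely well-known to experts (cf.~\cite[Theorem~4.13]{ar:BBF15})'' and defers the proof to a forthcoming companion paper~\cite{ar:CMMexpos}. So there is no in-paper argument to compare your proposal against.

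That said, your outline is exactly the standard route one takes for such a lower bound, and it is almost certainly what the deferred proof does as well: uniform quasi-convexity of the quasi-axis bundles, the thin-quadrilateral tracking lemma producing a subsegment $\gamma_\alpha$ of length roughly $d_\alpha(x,y)$ whenever that quantity is large, and then bounded pairwise overlap of these subsegments via the inequality-on-triples axiom from Proposition~\ref{prop:many}. The one place to be careful is your justification of the overlap bound. You phrase it as ``a substantial overlap would force both $d_\alpha(\beta,y)$ and $d_\beta(\alpha,y)$ to exceed $\theta$''; make sure you are applying the axiom to three elements of $\A$ (or, equivalently, comparing the projections of $x$, $y$, and points on $\beta$ to $\alpha$), since the projection distances $d_\alpha$ in the axioms take elements of $\A$ as arguments, not arbitrary points of $X$. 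The cleanest way is to observe that a long overlap of $\gamma_\alpha$ and $\gamma_\beta$ forces $\alpha$ and $\beta$ to fellow-travel for a long time, which for uniformly quasi-convex quasi-geodesics in a $\delta$-hyperbolic space gives a large $d_\alpha(x,\beta)$ and a large $d_\beta(\alpha,y)$ simultaneously, contradicting the inequality on triples. With that adjustment your sketch is sound.
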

The formula is reminiscent of the Masur--Minsky distance formula for elements in the mapping class group in terms of subsurface projections~\cite{ar:MM00}.  This inequality is surely well-known to experts (cf.~\cite[Theorem~4.13]{ar:BBF15}).  We provide a proof in a forthcoming paper \cite{ar:CMMexpos}.

\ip{A lower bound on projections to pivot points.} In order to apply Proposition~\ref{prop:mm-hyperbolic}, we need the following lemma, which says that the local spinning around a pivot point, $d_{v_k}(v_0,h_kv_0)$, is a lower bound on global projection distance, $d_w(v_0,hv_0)$.   In the statement, $L(\P)$ is the constant from Theorem~\ref{thm:spin}.  

\begin{lemma}\label{lem:remember}
Let $\P$ be a projection complex, $G$ a group that acts on $\P$, and $\{R_v\}$ an equivariant $L$--spinning family subgroups with $L \geq L(\P)$.  Choose windmill data and let $F$ be the associated free product.  Let $h \in F$ and let $h_1 \cdots h_\ell$ be the syllable decomposition of $h$.  Let $1 \leq k \leq \ell$ and let $w_k = h_1 \cdots h_{k-1}v_k \in \W(h)$.  Then
\[ d_{w_k}(v_0,hv_0) \geq \frac{1}{3}d_{v_k}(v_0,h_kv_0). \]
\end{lemma}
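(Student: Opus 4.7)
The plan is to extract the quantitative improvement implicit in the proof of statement (A) of Theorem~\ref{thm:spin} and then convert its additive form into the multiplicative bound asserted in the lemma.

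First I would recycle the setup from Theorem~\ref{thm:spin}(A). Write $h = h_\sigma h_k h_\tau$ with syllable decomposition as in Section~\ref{sec:pivot-way}, so that $h_k \in R_{v_k}$ fixes $v_k$ and $w_k = h_\sigma v_k$. Using $G$--invariance of the distance functions and the triangle inequality,
\[
d_{w_k}(v_0, h v_0) \;=\; d_{v_k}(h_\sigma^{-1} v_0,\, h_k h_\tau v_0) \;\geq\; d_{v_k}(v_0, h_k v_0) - d_{v_k}(v_0, h_\sigma^{-1} v_0) - d_{v_k}(h_k v_0, h_k h_\tau v_0),
\]
and since $h_k$ fixes $v_k$, the last term equals $d_{v_k}(v_0, h_\tau v_0)$.

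Next, I would invoke the bounds $d_{v_k}(v_0, h_\sigma^{-1} v_0) \leq m + \theta$ and $d_{v_k}(v_0, h_\tau v_0) \leq m + \theta$ that were established during the inductive proof of statement (A) in Theorem~\ref{thm:spin}. These bounds hold whenever $L \geq L(\P)$, which is our hypothesis, and they apply uniformly across all relevant levels and syllable lengths. Substituting yields
\[
d_{w_k}(v_0, h v_0) \;\geq\; d_{v_k}(v_0, h_k v_0) - 2(m + \theta).
\]

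The final step is the arithmetic observation that recasts this additive inequality multiplicatively. Since $h_k$ is a nontrivial element of the equivariant $L$--spinning family with $L \geq L(\P) = 3(m+\theta)+1$, we have $d_{v_k}(v_0, h_k v_0) \geq L \geq 3(m+\theta)+1$, so $2(m+\theta) \leq \tfrac{2}{3}\,d_{v_k}(v_0, h_k v_0)$. Combining gives
\[
d_{w_k}(v_0, h v_0) \;\geq\; d_{v_k}(v_0, h_k v_0) - \tfrac{2}{3}\, d_{v_k}(v_0, h_k v_0) \;=\; \tfrac{1}{3}\, d_{v_k}(v_0, h_k v_0),
\]
as required. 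There is no real obstacle here beyond carefully citing the two side bounds from the proof of Theorem~\ref{thm:spin}(A); the content of the lemma is precisely that the loss in passing from the local spinning distance to the global projection distance at a pivot is at most a factor of $3$, which is exactly the slack built into the choice $L(\P) = 3(m+\theta)+1$.
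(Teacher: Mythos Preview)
Your proof is correct and follows essentially the same route as the paper. The paper's proof simply cites inequality~\eqref{eq:callout} from the proof of Theorem~\ref{thm:spin} to get $d_{w_k}(v_0,hv_0) \geq d_{v_k}(v_0,h_k v_0) - 2L/3$ (a slight weakening of your $-2(m+\theta)$, since $m+\theta < L/3$) and then uses the spinning bound $d_{v_k}(v_0,h_k v_0) \geq L$ exactly as you do; you have unpacked that citation into its constituent steps, but the content is identical.
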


\begin{proof}
By the inequality \eqref{eq:callout} from the proof of Theorem~\ref{thm:spin}, we have that
\begin{equation*}
d_{w_k}(v_0,hv_0) \geq d_{v_k}(v_0,h_k v_0) - 2L/3.
\end{equation*}
As $d_{v_k}(v_0,h_k v_0) \geq L$, the proposition follows.
\end{proof}

\begin{proof}[Proof of Theorem \ref{thm:base}]
We begin with the first statement.  For each $f_i$ we fix a quasi-axis bundle $\beta_i \subseteq X$.  Let $\Y$ be the set of $G$--translates of these subsets of $X$.  By Proposition~\ref{prop:many}, the set $\Y$ and the distance functions $\{d_\beta\}_{\beta \in \Y}$ satisfy the projection complex axioms; let $\P$ be this projection complex.  Let $L = L(\P)$ be the constant from Theorem~\ref{thm:spin} and let $\tau$ be the minimum translation length of the $f_i$.  Then, as long as $N \geq L/\tau$,  the first statement is an application of Theorem~\ref{thm:spin}.  It remains to prove the second statement.

Towards proving the second statement, let $\Delta \geq 0$ be such that $\diam \pi_{\beta}(\beta') \leq \Delta$ for all distinct $\beta,\beta' \in \Y$.  (The existence of $\Delta$ is implicit in Proposition~\ref{prop:many}; a direct reference for the existence also appears in the paper by Dahmani--Guirardel--Osin \cite[Lemma~4.46]{ar:DGO17}.)   Also, let $M$ be the constant from Proposition~\ref{prop:mm-hyperbolic} applied to $X$ and the collection of quasi-axis bundles $\Y$.  We will show that the second statement, hence the theorem, holds for $N = \frac{1}{\tau}\max\{L,24\Delta,4M,24t\}$.  To this end, we let $n \geq N$.

Choose windmill data $\{H_i,W_i,\calO_i\}$ and say $W_0 = \{v_0\}$.  Suppose $h \in \normalclosure{f_1^n,\ldots,f_m^n}_G$ is nontrivial.  Let $h_{1} \cdots h_{\ell}$ be the syllable decomposition of $h$ with respect to the free product data associated to the given windmill data.  Since translation length is a conjugacy invariant, we may assume that $h$ is cyclically reduced.  We treat in turn the cases where the syllable length of $h$ is 1 and where it is greater than 1.

If $h$ has syllable length 1, then $h$ is a power of some $f_i^n$, and so for any $x \in X$ we have $d(x,h^{p}x) \geq pn\tau$ for all $p \geq 0$.  Thus as $n\tau \geq t$ the translation length of $h$ is at least $t$, as desired.

Now assume that the syllable length of $h$ is at least 2.  Let $x \in X$ be a point in the quasi-axis bundle $v_0 \in \Y$.  To bound $d(x,hx)$ from below, we will apply Proposition~\ref{prop:mm-hyperbolic}, focusing only on the summands corresponding to the quasi-axis bundles in $X$ that are pivot points for $h$.  In order to estimate these terms from below, we will apply Lemma~\ref{lem:remember}.

Let $w_k = h_1\cdots h_{k-1}v_k$ be an element of $\W(h)$.  As $h_k$ is a power of some $f_i^n$, it follows from Lemma~\ref{lem:remember} that
\begin{align*}
d_{w_k}(v_0,hv_0) \geq \frac{1}{3}d_{v_k}(v_0,h_kv_0) \geq \frac{n\tau}{3}.
\end{align*}
Hence, as the diameters of $\pi_{w_k}(v_0)$ and $\pi_{w_k}(hv_0)$ are at most $\Delta$, we have \[d_{w_k}(x,hx) \geq d_{w_k}(v_0,hv_0) - 2\Delta \geq \frac{n\tau}{3} - \frac{n\tau}{12} = \frac{n\tau}{4}. \]
Hence by Proposition~\ref{prop:mm-hyperbolic}, the fact that $\frac{n\tau}{4} \geq M$, and the fact that $\frac{n\tau}{24} \geq t$, we find
\begin{equation*}
d(x,hx) \geq \ \frac{1}{6}\sum_{w \in \W(h)} \cut{d_{w}(x,hx)}_{M} \ \geq \ \frac{1}{6}\sum_{w \in \W(h)} \frac{n\tau}{4} \ \geq \abs{\W(h)}t.
\end{equation*}
As $h$ is cyclically reduced and has at least two syllables, we have that $\abs{\W(h^{p})} = p\abs{\W(h)}$ for $p \geq 0$ and hence the above argument applied to $h^{p}$ shows that $d(x,h^{p}x) \geq p\abs{\W(h)}t$.  Thus the translation length of $h$ in $X$ is at least $t$, as desired.
\end{proof}


\section{Proof of the main theorem}\label{sec:general-proof}

In this section we prove Theorem~\ref{thm:main}, which states that if $G$ is a subgroup of $\Mod(S)$, $\X$ is a $G$--overlapping family of subsurfaces, $\F$ is a finite, $G$--independent family of mapping classes that are carried by $\X$ and are NEC in $G$, then there is an $N > 0$ with the following properties:
\begin{enumerate}
\item for each $n \geq N$ and any set of orbit representatives $\calY$ for the action of $\normalclosure{\F^{(n)}}_{G}$ on $G \cdot \X$ we have 
\begin{equation*}
\normalclosure{\F^{(n)}}_{G} \cong \BigFreeProd{Y \in \calY} R_Y, \  \text{and}
\end{equation*} 
\item further, for each $Y \in \calY$ we have $R_Y \cong F_{\infty}^{b_Y} \times \Z^{a_Y}$.	
\end{enumerate}

\ip{Symmetrization} Let $G$, $\X$, and $\F$ be as above.  We explain here how to derive a family of mapping classes from $\F$ that is symmetric in the sense that the subsets of elements supported on two components of $X \in \X$ in the same $\Stab_G(X)$--orbit are conjugate in $\Stab_G(X)$.  

First we require some notation.  Fix some $X \in \X$.  Given a component $X' \in \pi_{0}(X)$, let $\F_{X'}$ be the subset of $\sigma^{-1}(X)$ consisting of all elements whose support is $X'$.  The set $\sigma^{-1}(X)$ is the disjoint union of $\F_{X'}$ over the components $X' \in \pi_{0}(X)$.  

Suppose now that $X_0,\ldots,X_k \in \pi_0(X)$ lie in the same $\Stab_G(X)$--orbit.  Let $g_0 \in G$ be the identity and fix elements $g_1,\ldots,g_k \in \Stab_{G}(X)$ such that $g_i X_0 = X_i$.  Then consider the set
\[
\bigcup_{i,j} \, (g_jg_i^{-1})\F_{X_i}(g_jg_i^{-1})^{-1}
\]
and let $\widehat \F$ be the union of these sets over all $X \in \X$ and all $\Stab_G(X)$--orbits of components of $X$.  We refer to $\widehat \F$ as the \emph{symmetrization} of $\F$.   

There is an induced function $\hat \sigma$ from $\widehat \F$ to the set of components of elements of $\X$; this function takes an element of $\widehat \F$ to its support.  

For $X \in \X$ the normal closure in $\Stab_G(X)$ of the union of the sets $\hat \sigma^{-1}(X')$ with $X'$ a component of $X$ is equal to the group $R_X$, since the new elements are conjugates of the originals by elements in $\Stab_{G}(X)$.  The sets $\hat \sigma^{-1}(X')$ are $G$--independent and each element of $\widehat \F$ is NEC.   

\ip{Applying Theorem~\ref{thm:base} when the surface has boundary.}  To understand the group structure of $R_X$ for $X \in \X$, we would like to apply Theorem~\ref{thm:base} to the action of $\hat \sigma^{-1}(X') \subset \Stab_{G}(X')$ on $\C(X')$ for each non-annular component $X'$ of $X$.  Unfortunately the elements in $\hat \sigma^{-1}(X')$ are not necessarily WPD elements for this action.   Indeed, if $c$ is a component of $\partial X'$ then $T_c$ acts trivially on $\C(X')$ and so if a power of $T_c$ lies in $G$ then no element of $\Stab_{G}(X')$ is WPD.  

Instead, we proceed as in the discussion on partial pseudo-Anosov mapping classes from the introduction.  There is a homomorphism $\Stab_{G}(X') \to \Mod(\bar{X}')$ obtained by collapsing each component of the boundary to a marked point; let $\bar{G}$ denote the image.  As in the proof of Theorem~\ref{thm:base}, there is a natural projection complex that $\bar{G}$ acts on.  The vertices correspond to the quasi-axes bundles in $\C(\bar{X}')$ for the images of the elements in $\hat \sigma^{-1}(X')$.  

Using the action of $\Stab_{G}(X')$ on this projection complex, we can argue as in the proof of Theorem~\ref{thm:base} that for any $t \geq 0$ there is an $N$ so that for $n \geq N$ the subgroup
\[
\normalclosure{ f^{n} \mid f \in \hat\sigma^{-1}(X')}_{\Stab_{G}(X')}
\]
is an infinitely generated free group with basis consisting of conjugates of the various $f^{n}$ and that the translation length of any nontrivial element in this subgroup on $\C(X')$ is at least $t$.

\begin{proof}[Proof of Theorem~\ref{thm:main}]

As in the statement, let $\X$ be a $G$--overlapping family of subsurfaces and let $\F \subset G$ be a finite family of mapping classes that are carried by $\X$ and are $G$--independent.  

If $\X = \{S\}$, then each element of $\F$ is pseudo-Anosov.  In particular, $\F$ is a collection of $G$--independent NEC pseudo-Anosov mapping classes.  The theorem then follows from Theorem~\ref{thm:base}, applying the above discussion in the case that $\partial S \neq \emptyset$. 

We may thus assume in the remainder that $\X \neq \{S\}$.  Let $\Y = G \cdot \X$ and let $\P$ be the projection complex obtained by Proposition~\ref{prop:disconnected subsurfaces}.  Recall that the distance functions are
\begin{equation*}\label{eq:projection sum}
\dd_{Y}(X,Z) = \sum_{Y' \in \pi_{0}(Y)} d_{Y'}(X,Z),
\end{equation*}
where $d_{Y'}$ is the subsurface projection distance.  Let $L = L(\P)$ be the constant from Theorem~\ref{thm:spin}.    

Let $\widehat \F$ be the symmetrization of $\F$ with corresponding function $\hat \sigma$, as defined at the start of the section.  Fix some $X \in \X$.  For $X' \in \pi_0(X)$ we define
\[
\widehat R_{X'} = \normalclosure{ f^n \mid f \in \hat\sigma^{-1}(X')}_{\Stab_G(X')}.
\]
The proof proceeds in the following three steps.
\begin{enumerate}
\item There exists an $N_X$ so that for $X' \in \pi_0(X)$ and $n \geq N_X$ the following statements hold:
\begin{enumerate}
    \item $\widehat R_{X'}$ is $\Z$ if $X'$ is an annulus and $F_\infty$ otherwise, and
    \item the translation length of each element of $\widehat R_{X'}$  acting on $\C(X')$ is at least $L$.
\end{enumerate}
\item $R_{X} \cong F_{\infty}^{b_X} \times \Z^{a_X}$.
\item The action of $R_{X}$ on $\P$ is $L$--spinning.
\end{enumerate}
Since $\X$ is finite, the theorem follows from these statements and Theorem~\ref{thm:spin}, taking $N$ to be the maximum of the $N_X$.

\medskip \noindent {\it Step 1.}  We will define a constant $N_{X'}$ for each component $X' \in \pi_{0}(X)$ and take $N_{X}$ to be maximum of these.

If $X'$ is an annulus, then $\hat \sigma^{-1}(X')$ consists of a single element $f$ which is a power of the Dehn twist supported on this annulus.  We have  $d_{X'}(x,f^{n}x) \geq \abs{n} + 2$ for any $x \in \C(S)$ that intersects $X'$.  As $\grp{f}$ is normal in $\Stab_{G}(X')$ we have that $\widehat R_{X'} = \grp{f^{n}} \cong \Z$.  In this case we define $N_{X'}$ to be~$L$.

It remains to treat the case where $X'$ is non-annular.  As explained at the start of the section, the proof of Theorem~\ref{thm:base} shows that there is a constant $N_{X'}$ such that for $n \geq N_{X'}$ the subgroup $\widehat R_{X'}$ is an infinitely generated free group with basis consisting of conjugates of the various $f^{n}$ and such that the translation length of any nontrivial element of $\widehat R_{X'}$ on $\C(X')$ is at least $L$.  

\medskip \noindent {\it Step 2.}  As $X$ is $G$--overlapping, it follows that for each $X' \in \pi_{0}(X)$ the group $\Stab_{G}(X')$ is a subgroup of $\Stab_{G}(X)$.  There is a natural function
\[
\Psi \from \prod_{X' \in \pi_{0}(X)} \widehat R_{X'} \to \Stab_{G}(X)
\]
that multiplies the coordinate entries of an element in the abstract direct product.  The function $\Psi$ is a well-defined homomorphism because elements in the factor subgroups of $\prod_{X' \in \pi_{0}(X)} \widehat R_{X'}$ have support in a single component of $X$ and elements with disjoint support in $\Mod(S)$ commute.

By Step 1(a), we have that $\prod_{X' \in \pi_0(X)} \widehat R_{X'} \cong F_{\infty}^{b_{X}} \times \Z^{a_{X}}$.  It follows from the definition of symmetrization that the image of $\Psi$ is $R_X$.  It remains to show that $\Psi$ is injective.

Suppose $\Psi(f)$ is trivial and that $f$ is nontrivial. It follows that each coordinate $f_{X'}$ of $f$ is a product of powers of Dehn twists about components of $\partial X'$.  Further, there must be at least one nonannular component $X' \in \pi_{0}(X)$ where $f_{X'}$ is nontrivial, as $X$ contains no parallel annuli.  As each nontrivial element of $\widehat R_{X'}$ has positive translation length on $\C(X')$ by Step 1(b), and since Dehn twists about components of $\partial X'$ act trivially on $\C(X')$, this is a contradiction. 

\medskip 

\noindent {\it Step 3.}  Suppose that $f \in R_{X}$ is nontrivial and $Y \in \Y - \{X\}$.  We write $f$ as a product of elements $f_{X'}$ where $f_{X'} \in \widehat R_{X'}$.  Then we have
\begin{align*}
\dd_{X}(Y,fY) & = \sum_{X' \in \pi_{0}(X)} d_{X'}(Y,f_{X'}Y) \geq L,
\end{align*}
since at least one of the $f_{X'}$ is nontrivial and hence by Step 1 its translation length in $\C(X')$ is at least $L$.  This completes the proof.
\end{proof}


\section{Applications}\label{sec:applications}

In this section we use Theorem~\ref{thm:main} to prove two theorems.  First we prove  Theorem~\ref{thm:cong}, which gives an explicit construction of a normal subgroup of $\Mod(S_g)$ that is not contained in any proper level $m$ congruence subgroup.  We then prove Theorem~\ref{thm:evenodd}, which gives an explicit example of a pseudo-Anosov mapping class $f \in \Mod(S_g)$ with the property that all nonzero even powers of $f$ normally generate a free subgroup of infinite rank and all odd powers of $f$ normally generate $\Mod(S_g)$.  

\subsection{Thurston's construction}\label{sec:thurston} Our examples will be produced from the Thurston construction of pseudo-Anosov mapping classes.  We begin by recalling a special case of Thurston's construction, which works for Dehn twists, and then explain the general case.

Let $c$ and $d$ be curves in $S_g$ that lie in minimal position and have positive geometric intersection number $i(c,d)$.  Let $X$ be the surface with marked points obtained as follows: we take a closed regular neighborhood of $c \cup d$ in $S_g$, take the union of this neighborhood with any complementary regions in $S_g$ that are disks, and then collapse each remaining component of the boundary to a marked point.  By construction, the curves $c$ and $d$ fill $X$, meaning that the complementary regions are disks with at most one marked point.  If $c$ and $d$ fill $S_g$ to begin with, then $X=S_g$.

The curves $c$ and $d$ induce a cell decomposition of $X$ (the vertices are the points of $c \cap d$) and the dual complex is a square complex.  There is a singular Euclidean structure on $X$ where each 2-cell of the square complex is a Euclidean square.  In what follows we take $X$ to be endowed with this structure.

There are two transverse measured foliations on $X$ where the leaves are geodesics parallel to $c$ and $d$, respectively, and where the transverse measures are given by the metric.  We refer to these foliations as the horizontal and vertical foliations of $X$.  These foliations have singularities at the vertices of the square complex.

Let $\Aff(X)$ denote the group of orientation-preserving homeomorphisms of $X$ that preserve the affine structure on $X$ induced by the singular Euclidean metric associated to $X$.  If the complement of the marked points in $X$ has negative Euler characteristic, then homotopic elements of $\Aff(X)$ are equal, and so we may regard $\Aff(X)$ as a subgroup of $\Mod(X)$.  We denote by $\Isom(X)$ the finite subgroup of $\Aff(X)$ consisting of isometries of the singular Euclidean metric \cite[Expose 9]{flp}.  

The horizontal and vertical foliations on $X$ give an orthonormal frame field, well defined up to sign: at each point, we take unit tangent vectors pointing in the horizontal and vertical directions so that the two vectors (in that order) agree with some fixed orientation on $X$.   With these coordinates we obtain a derivative map
\[
D \from \Aff(X) \to \PSL_{2}\R
\]

The Dehn twists $T_c$ and $T_d$ both lie in $\Aff(X)$.  Taking $n=i(c,d)$ we have
\[
DT_c = \left( {\begin{array}{rr}
   1 &  n\\
   0 & 1 \\
  \end{array} } \right)\quad \text{and} \quad
  DT_d = \left( {\begin{array}{rr}
   1 &  0\\
   -n & 1 \\
  \end{array} } \right).
\]

Thurston \cite[Theorem 7]{thurston} proved that $f \in \langle T_c , T_d \rangle$ is periodic, reducible, or pseudo-Anosov exactly according to whether $Df$ is elliptic, parabolic, or hyperbolic.  In the pseudo-Anosov case, the stretch factor of $f$ is equal to the absolute value of the leading eigenvalue of $Df$.  

The singular Euclidean metric on $X$ associated to the stable and unstable foliations of a pseudo-Anosov $f \in \langle T_c , T_d \rangle$ is equal to the original one defined in terms of the curves $c$ and $d$.  In particular, the stable and unstable foliations are geodesic, and we refer to the corresponding directions in the tangent spaces of the nonsingular points as the stable and unstable directions.  Because the two singular Euclidean structures coincide, the elementary closure of $f$ is a subgroup of $\Aff(X)$.

A multicurve in $S_g$ is a collection of pairwise disjoint curves in $S_g$.  Given a multicurve $A$, the associated multi-twist is the product of the Dehn twists about the curves in $A$.  Given two multicurves $A$ and $B$ in $S_g$, there is an analogous Thurston construction.  We may form the surface $X$ as in the case where $A$ and $B$ are curves.  Instead of a square decomposition we use a rectangle decomposition; the lengths and widths of the rectangles are completely determined by the pairwise intersection numbers of the curves in $A$ and $B$.  As before, there is an associated flat structure on $X$ and a homomorphism $D \from \Aff(X) \to \PSL_{2}\R$.  The Nielsen--Thurston type of $f$ is determined by $Df$ in the same way as before.  

\subsection{Two lemmas} We now state and prove the two lemmas (and a corollary) that will be used to prove Theorems~\ref{thm:cong} and~\ref{thm:evenodd}.  The proof of the first lemma is due to Marissa Loving \cite{loving}.

\begin{lemma}
\label{lem:asym curves}
Let $g \geq 2$.  Let $c$ and $d$ be curves in $S_g$ with $i(c,d) > 0$.  Then the mapping classes $f_1 = T_cT_d^{-1}$ and $f_2 = T_cT_d^{-2}$ are normally independent (partial) pseduo-Anosov mapping classes.
\end{lemma}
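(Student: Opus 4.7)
The plan is to apply Thurston's construction from Section~\ref{sec:thurston} directly to the pair $c,d$. Let $X \subseteq S_g$ be the subsurface filled by $c$ and $d$, endowed with its singular Euclidean structure, and set $n = i(c,d) > 0$. Both $f_1 = T_cT_d^{-1}$ and $f_2 = T_cT_d^{-2}$ lie in $\Aff(X)$, with derivatives
\[
Df_1 = \begin{pmatrix} 1+n^2 & n \\ n & 1 \end{pmatrix}, \qquad Df_2 = \begin{pmatrix} 1+2n^2 & n \\ 2n & 1 \end{pmatrix}.
\]
Both matrices have determinant $1$ and trace strictly greater than $2$, so they are hyperbolic. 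By Thurston's theorem, $f_1$ and $f_2$ are pseudo-Anosov on $X$, and hence partial pseudo-Anosov on $S_g$ (or pseudo-Anosov if $X = S_g$). Their stretch factors $\lambda_1, \lambda_2$ are the leading eigenvalues of these matrices.

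For normal independence, I would argue by contradiction. Since $f_1, f_2$ share the support $X$, failure of normal independence produces some $g \in \Mod(S_g)$ with $gX = X$ such that $(gf_1g^{-1})^p$ and $f_2^q$ agree in $\Mod(\bar X)$ for some positive integers $p, q$. As dilatation is a conjugacy invariant and is insensitive to taking inverses, this forces the equation $\lambda_1^p = \lambda_2^q$.

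The remaining step is number-theoretic. The characteristic polynomials of $Df_1, Df_2$ place $\lambda_1 \in K_1 := \mathbb{Q}(\sqrt{n^2+4})$ and $\lambda_2 \in K_2 := \mathbb{Q}(\sqrt{n^2+2})$. The identity $(n^2+4)(n^2+2) = (n^2+3)^2 - 1$ expresses the product as $m^2-1$ with $m = n^2+3 \geq 4$, which is never a perfect square; therefore $K_1 \neq K_2$ and $K_1 \cap K_2 = \mathbb{Q}$. So any common value $\lambda_1^p = \lambda_2^q$ must be rational. But $\lambda_1$ is a quadratic unit whose Galois conjugate is $1/\lambda_1$, so $\lambda_1^p \in \mathbb{Q}$ forces $\lambda_1^p = \lambda_1^{-p}$ and hence $\lambda_1^{2p} = 1$, contradicting $\lambda_1 > 1$.

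The main obstacle is really the reduction in the second paragraph. It relies on the paper's conventions: conjugates of $f_1$ whose support is not in the $\Mod(S_g)$-orbit of $X$ are automatically independent of $f_2$, so it suffices to take $g \in \Stab(X)$ and compare the induced maps on $\bar X$. Once the problem has been translated into the equation $\lambda_1^p = \lambda_2^q$, the number-theoretic ending is short and essentially forced.
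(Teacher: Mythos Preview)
Your proof is correct and follows essentially the same approach as the paper: both use Thurston's construction to place $\lambda_1$ and $\lambda_2$ in the distinct quadratic fields $\mathbb{Q}(\sqrt{n^2+4})$ and $\mathbb{Q}(\sqrt{n^2+2})$, and then derive a contradiction from a hypothetical equality $\lambda_1^p = \lambda_2^q$. The only differences are cosmetic---the paper shows the fields are distinct by a parity case analysis on the square-free parts of $n^2+2$ and $n^2+4$ rather than your product identity $(n^2+2)(n^2+4)=(n^2+3)^2-1$, and it finishes by invoking the fact that pseudo-Anosov stretch factors are irrational rather than your (slightly more self-contained) quadratic-unit argument.
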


\begin{proof}

Let $X$ be the singular Euclidean surface with marked points obtained from $c$ and $d$ as above and denote $i(c,d)$ by $n$.  We regard $f_1$ and $f_2$ as elements of $\Mod(X)$.  The mapping classes $f_1$ and $f_2$ fall under the Thurston construction.  Using the derivative map as above, we see that $f_1$ and $f_2$ are pseudo-Anosov with stretch factors 
\[
\lambda_1 = \frac{n^2+2+n\sqrt{n^2+4}}{2} \quad \text{and} \quad \lambda_2 = \quad n^2+1+n\sqrt{n^2+2}.
\]
In particular $\lambda_1$ and $\lambda_2$ lie in the quadratic fields $\mathbb{Q}(\sqrt{m+2})$ and $\mathbb{Q}(\sqrt{m})$, where $m = n^2+2$.  

We claim that $\mathbb{Q}(\sqrt{m+2}) \neq \mathbb{Q}(\sqrt{m})$, which is the same as saying that $m$ and $m+2$ have different square-free parts.  If $m$ is even, then $m+2$ is also even, but exactly one of $m$ and $m+2$ is divisible by 4, and the claim follows.  If $m$ is odd, then no prime factor of $m$ also divides $m+2$, and so again the claim follows.

For $p \in \Z$, the stretch factor of any conjugate of $f_i^p$ is $\lambda_i^{|p|}$.  The stretch factor of a (partial) pseudo-Anosov mapping class is irrational, and so $\lambda_i^p$ is irrational.  If a conjugate of $f_1^{p}$ were equal to a power of $f_2$ then $\lambda_1^{|p|}$ would be an irrational element of both $\mathbb{Q}(\sqrt{m+2})$ and $\mathbb{Q}(\sqrt{m})$.  But two quadratic fields with a common irrational element are equal.  The lemma follows.
\end{proof}

We require one further definition for the statement of the next lemma.  Let $f$ be a pseudo-Anosov mapping class with fixed points $F_+$ and $F_-$ in $\PMF(S_g)$.  Recall that the group $\EC(f)$ is the stabilizer of $\{F_+,F_-\}$ in $\PMF(S_g)$, and $\EC^*(f)$ denotes the subgroup fixing both $F_+$ and $F_-$.  The index of $\EC^*(f)$ in $\EC(f)$ is at most 2.

The following lemma was communicated to us by Chris Leininger \cite{leininger}.  

\begin{lemma}
\label{lem:aff}
Let $A$ and $B$ be multicurves that fill $S_g$, and let $X$ be the associated singular Euclidean surface.  Let $f$ be a pseudo-Anosov element of $\langle T_A, T_B \rangle$, and let $h$ be a periodic element of $\EC(f)$.  Then $h$ preserves $A \cup B$.  Moreover, $h$ lies in $\EC^*(f)$ if and only if $h$ preserves both $A$ and $B$.
\end{lemma}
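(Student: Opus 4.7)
I would deduce the lemma by showing that $h$ is actually a Euclidean isometry of $X$ and then reading off its action on the horizontal and vertical cylinders.

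The first step is to verify that $h \in \Aff(X)$. The argument the paper gives in the two-curve case---that the singular Euclidean structure associated to the stable and unstable foliations of a pseudo-Anosov in $\langle T_c, T_d \rangle$ coincides with the one built from $c$ and $d$, so that $\EC(f) \subseteq \Aff(X)$---extends verbatim to the multicurve setting. Under this identification, $\{F_+, F_-\}$ are exactly the horizontal and vertical foliations of $X$, and since $A\cup B$ fills $S_g$ there are no issues about $X$ being a proper subsurface.

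The second step is to promote $h$ to an element of $\Isom(X)$. Since $h^n = \mathrm{id}$ for some $n$, the element $Dh \in \PSL_2\R$ has finite order, and since $h \in \EC(f)$, $Dh$ preserves the unordered pair of eigen-lines for the horizontal and vertical directions, so $Dh$ is either diagonal or anti-diagonal. The only finite-order diagonal element of $\PSL_2\R$ is the identity, so in the $\EC^*(f)$ case $Dh=I$ and $h$ is automatically a Euclidean isometry. In the swapping case $Dh$ is anti-diagonal, and here I would invoke McCarthy's structure theorem for $\EC(f)$---an extension of a virtually cyclic group by the finite kernel $\EC_0(f)$---to conclude that every periodic element of $\EC(f)$ lies in a finite subgroup that realizes the symmetries of the singular Euclidean metric; equivalently, averaging the flat metric over $\langle h \rangle$ produces an $h$--invariant flat metric compatible with the given affine structure, forcing $Dh \in O(2)$. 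This anti-diagonal case is the one requiring the most care, since the analogous statement can fail for non-periodic affine maps swapping the foliations.

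With $h \in \Isom(X)$ in hand, the argument is combinatorial. The regular part of $X$ decomposes into finitely many maximal horizontal cylinders whose core curves are precisely the components of the multicurve $A$, and analogously for vertical cylinders and $B$. If $h$ preserves the horizontal foliation---which is the case exactly when $h \in \EC^*(f)$---then $h$ permutes horizontal cylinders and hence preserves $A$ as an isotopy class of multicurves, and similarly preserves $B$; if instead $h$ swaps the two foliations, then $h$ sends horizontal cylinders to vertical cylinders, giving $h(A)=B$ and $h(B)=A$. In either case $h(A \cup B) = A \cup B$, which is the first assertion, and the dictionary of the previous paragraph shows that $h \in \EC^*(f)$ if and only if $h(A) = A$ and $h(B) = B$, which is the second.
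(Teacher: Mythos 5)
Your overall strategy is the paper's: pass to the derivative map $D \from \Aff(X) \to \PSL_2\R$, split into the case where $Dh$ preserves the two invariant lines and the case where it swaps them, and then read off the action on the horizontal and vertical cylinders, whose cores are the components of $A$ and $B$. But there is a genuine error at the hinge of the argument: the claim that ``$\{F_+,F_-\}$ are exactly the horizontal and vertical foliations of $X$'' is false. The stable and unstable foliations of $f$ lie in the eigendirections of $Df$, and these are never the coordinate directions: for $f=T_AT_B^{-1}$ one has $Df=\left(\begin{smallmatrix}1+n^2 & n\\ n & 1\end{smallmatrix}\right)$, whose eigenlines are not the axes; more conceptually, the horizontal foliation of $X$ has all leaves closed, whereas $F_\pm$ have dense leaves, so they cannot coincide. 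Consequently, the statement ``$h\in\EC(f)$ implies $Dh$ preserves the pair of coordinate axes, hence is diagonal or anti-diagonal'' does not follow: membership in $\EC(f)$ only forces $Dh$ to preserve or swap the pair of \emph{eigenlines of $Df$}, which is a different pair of lines.

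The $\EC^*(f)$ case survives this conflation, because there $Dh$ fixes both eigenlines and has finite order, hence is trivial in $\PSL_2\R$, and the trivial derivative preserves every direction, in particular the horizontal and vertical ones. The swapping case, and with it the ``only if'' direction of the second assertion and the first assertion for $h\notin\EC^*(f)$, is where the gap bites: from ``$Dh$ interchanges the eigenlines of $Df$'' you cannot yet conclude that $Dh$ interchanges the horizontal and vertical directions. The missing step---which is exactly what the paper supplies---is to use that $Dh$ is an honest rotation (this is where periodicity enters, and your averaging observation is the right tool for it): a rotation that interchanges two distinct lines must be rotation by $\pi/2$, which forces the eigenlines to be orthogonal and, crucially, also interchanges the coordinate axes; only then does $h$ send horizontal cylinders to vertical ones and hence $A$ to $B$. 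As written, your $O(2)$ claim is deployed only to rule out non-isometric periodic affine maps, not to bridge the gap between the eigenframe of $Df$ and the horizontal--vertical frame, so the argument as stated does not close.
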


\begin{proof}
Since $h$ is periodic, $Dh$ is a rotation.  Moreover, this rotation must preserve the pair of eigenspaces for $Df$.  If $Dh$ preserves the two eigenspaces, then $Dh$ is trivial, in which case $h$ fixes the horizontal and vertical directions, hence $A$ and $B$.  If $Dh$ interchanges the eigenspaces, then it must be that the stable and unstable directions are orthogonal and that $Dh$ is rotation by $\pi/2$.  Thus $Dh$ also interchanges the horizontal and vertical directions in $S_g$.  The curves of $A$ and $B$ are exactly the curves with vertical or horizontal trajectories, so $A$ and $B$ are interchanged.
\end{proof}

\begin{lemma}
\label{lem:thurston}
Let $A$ and $B$ be multicurves in $S_g$ that fill $S_g$, and assume that there is no element of $\Mod(S_g)$ interchanging $A$ and $B$.  Let $f$ be a pseudo-Anosov element of $\langle T_A, T_B \rangle$.  Then $f$ is central in $\EC(f)$.  In particular, $f$ is NEC.
\end{lemma}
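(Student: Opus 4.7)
The plan is to analyze $\EC(f)$ via the derivative map $D\from\Aff(X)\to \PSL_{2}\R$ from Section~\ref{sec:thurston}. Since $A$ and $B$ fill $S_g$, the singular Euclidean surface $X$ is exactly $S_g$, and because every element of $\EC(f)$ stabilizes $\{F_+,F_-\}$---which are the stable and unstable foliations of the flat structure on $X$---we have $\EC(f)\subseteq \Aff(X)$, as noted in Section~\ref{sec:thurston}. Moreover, for $g\geq 2$ the map $D$ is injective on $\Aff(X)$: any nontrivial element with trivial derivative would have to be a translation in the flat structure, but a closed surface of negative Euler characteristic admits no such nontrivial translations.

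For any $h\in\EC^*(f)$, the element $h$ fixes each of $F_+$ and $F_-$, so $Dh$ preserves each of the two distinct eigendirections of $Df$. In the stable/unstable basis, $Df$ and $Dh$ are then both diagonal and hence commute; by injectivity of $D$, $h$ commutes with $f$.

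To show $\EC(f)=\EC^*(f)$, I would argue by contradiction: suppose there exists $h\in\EC(f)\setminus\EC^*(f)$. Then $h$ interchanges the two eigendirections of $Df$, so in the stable/unstable basis $Dh$ is anti-diagonal of the form $\bigl(\begin{smallmatrix}0&\beta\\-\beta^{-1}&0\end{smallmatrix}\bigr)$ (the determinant-$1$ constraint forces this shape). Its square equals $-I$, which represents the identity in $\PSL_{2}\R$, so by injectivity of $D$ we conclude $h^{2}$ is the identity and $h$ is a nontrivial periodic element of $\EC(f)$. Lemma~\ref{lem:aff} then forces $h$ to preserve $A\cup B$, while the condition $h\notin\EC^*(f)$ prevents $h$ from preserving both $A$ and $B$ individually. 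Since $Dh$ uniformly interchanges the horizontal and vertical directions---and since $A$ and $B$ are precisely the vertical and horizontal multicurves of the flat structure---$h$ must interchange $A$ and $B$, contradicting the hypothesis of the lemma.

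Combining the two steps yields $\EC(f)=\EC^*(f)$ and that $f$ commutes with every element of $\EC(f)$; in particular $\grp{f}$ is normal in $\EC(f)$, so $f$ is NEC. The main subtlety I anticipate is cleanly justifying injectivity of $D$ on $\Aff(X)$ in the presence of cone points, and upgrading the statement ``$Dh$ swaps the horizontal and vertical directions'' to the conclusion that $h$ interchanges $A$ and $B$ as multicurves rather than permuting their components in some subtler way (this follows from the fact that every component of $B$ is globally horizontal and every component of $A$ is globally vertical, so ``horizontal'' and ``vertical'' give a well-defined partition of $A\cup B$ into $B$ and $A$).
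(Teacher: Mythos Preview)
Your overall architecture matches the paper's: first show $\EC(f)=\EC^*(f)$, then show $f$ is central in $\EC^*(f)$.  The gap is your claim that $D\from\Aff(X)\to\PSL_2\R$ is injective.  It is not.  Since $D$ lands in $\PSL_2\R$, the kernel consists of affine maps with derivative $\pm I$, and you only address the $+I$ case.  The $-I$ case already produces counterexamples: on $S_2$, whenever $A$ and $B$ are invariant under the hyperelliptic involution $\iota$ (which is easy to arrange), $\iota$ is a nontrivial element of $\Aff(X)$ with $D\iota$ trivial in $\PSL_2\R$.  Even your claim about the $+I$ case (``no nontrivial translations on a closed negatively curved surface'') is not generally true for translation surfaces of genus $\geq 2$; unbranched covers of lower-genus translation surfaces carry nontrivial translations as deck transformations.

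This breaks your central step.  From ``$Dh$ and $Df$ are both diagonal'' you only get $[h,f]\in\ker D$, not $[h,f]=1$.  Your argument that $\EC(f)=\EC^*(f)$ survives with a minor patch (if $Dh$ is anti-diagonal then $h^2\in\ker D\subseteq\Isom(X)$ is finite order, so $h$ is periodic and Lemma~\ref{lem:aff} applies).  But for centrality of $f$ in $\EC^*(f)$ you need a genuinely different input.  The paper supplies it via McCarthy's decomposition $\EC^*(f)\cong\langle f_0\rangle\ltimes\Isom(X)$: since $f$ is a power of $f_0$, the first factor commutes with $f$; and every element of $\Isom(X)$ is periodic, hence by Lemma~\ref{lem:aff} (and the hypothesis) preserves both $A$ and $B$, hence commutes with $T_A$ and $T_B$, hence with $f$.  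Note this last observation is exactly the mechanism you would need to control $\ker D$, since $\ker D\subseteq\Isom(X)$---but once you invoke it, the derivative-commutation argument is superfluous.
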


\begin{proof}

We first claim that $\EC(f) = \EC^*(f)$.  Let $h \in \EC(f)$.  If $h$ has infinite order, then $h^2$ is of infinite order and preserves the points of $\PMF(S_g)$ corresponding to the stable and unstable foliations for $f$.  It follows that $h^2$ is pseudo-Anosov \cite[Expos\'e 9, Lemme 15]{flp}.  By the Nielsen--Thurston classification theorem, $h$ is also pseudo-Anosov.  Since a pseudo-Anosov mapping class has exactly two fixed points in $\PMF(S_g)$, it follows that  $h^2$ and $h$ have the same pair of fixed points, hence $h \in \EC^*(f)$.  If $h$ is of finite order, then it follows from Lemma~\ref{lem:aff} and the assumption on $A$ and $B$ that $h \in \EC^*(f)$.  This completes the proof of the claim.

Let $f_0$ be a root of $f$ with minimal stretch factor.  There is an internal semidirect product decomposition as follows:
\[
\EC^*(f) \cong \langle f_0 \rangle \ltimes \Isom(X).
\]
The proof of this statement can be found in the unpublished paper by McCarthy \cite{mccarthy}.  Clearly $f_0$ commutes with $f$.  Also, since each element of $\Isom(X)$ is of finite order, it follows from Lemma~\ref{lem:aff} and the assumption on $A$ and $B$ that each element of $\Isom(X)$ preserves both $A$ and $B$, and hence commutes with $f$.  The lemma follows.
\end{proof}

\subsection{Proofs of the theorems} We are ready now to prove Theorems~\ref{thm:cong} and~\ref{thm:evenodd}.  The construction in the proof of Theorem~\ref{thm:cong} was suggested by Mladen Bestvina \cite{mladen}.

\begin{proof}[Proof of Theorem~\ref{thm:cong}]

Choose a nonseparating curve $c$ and a separating curve $d$ so that $c$ and $d$  fill the surface $S_g$.  Let $f_1= T_cT_d^{-1}$ and let $f_2 = T_cT_d^{-2}$.  By the Thurston construction, $f_1$ and $f_2$ are pseudo-Anosov,  by Lemma~\ref{lem:asym curves} they are normally independent, and by Lemma~\ref{lem:thurston} they are both NEC.  We may thus apply Theorem~\ref{thm:base}.  Specifically, we may choose distinct prime numbers $p_1$ and $p_2$ so that the normal closure $N$ of $f_1^{p_1}$ and $f_2^{p_2}$ is a free group.  In particular, $N$ is a proper subgroup of $\Mod(S_g)$.  

It remains to show that $N$ is not contained in any congruence subgroup of $\Mod(S_g)$.  Since $T_d$ acts trivially on $H_1(S_g;\Z)$ the action of $f_1^{p_1}$ on $H_1(S_g;\Z)$ is the same as that of $T_c^{p_1}$.  In particular, $f_1^{p_1}$ lies in $\Mod(S_g)[m]$ if and only if $m$ divides $p_1$.  Similarly, $f_2^{p_2}$ lies in $\Mod(S_g)[m]$ if and only if $m$ divides $p_2$.  Thus there is no proper subgroup $\Mod(S_g)[m]$ containing $N$, as desired.
\end{proof}

We were informed by Ashot Minasyan \cite{ashot} of an alternative example a subgroup of $\Mod(S_g)$ satisfying the conclusion of Theorem~\ref{thm:cong}.  This example is in fact not contained in any proper normal subgroup of finite index.  The construction uses a result of Michael Hull, and proceeds as follows.  Let $A$ be a finitely generated group that has no finite quotients other than the trivial group and set $G = A \ast A$.  Notice that $G$ also does not have any finite quotients other than the trivial group.  The action of $G$ on the corresponding Bass--Serre tree shows that $G$ is acylindrically hyperbolic.  Since $\Mod(S_{g})$ is also acylindrically hyperbolic~\cite{ar:Bowditch08}, it follows from a result of Hull that there is a group $Q$ that is a quotient of both $G$ and $\Mod(S)$~\cite[Corollary~1.6]{ar:Hull16}.  As $Q$ is a quotient of $G$, it too does not have any finite quotients other than the trivial group.  Let $K$ be the kernel of the map $\Mod(S_{g}) \to Q$.  If $K$ is contained in a proper normal subgroup $H$ of finite index in $\Mod(S_{g})$ then the image of $H$ in $Q$ is a proper normal finite index subgroup of $Q$, which is a contradiction as $Q$ does not have any finite quotients other than the trivial group.

\begin{figure}
\includegraphics[scale=.35]{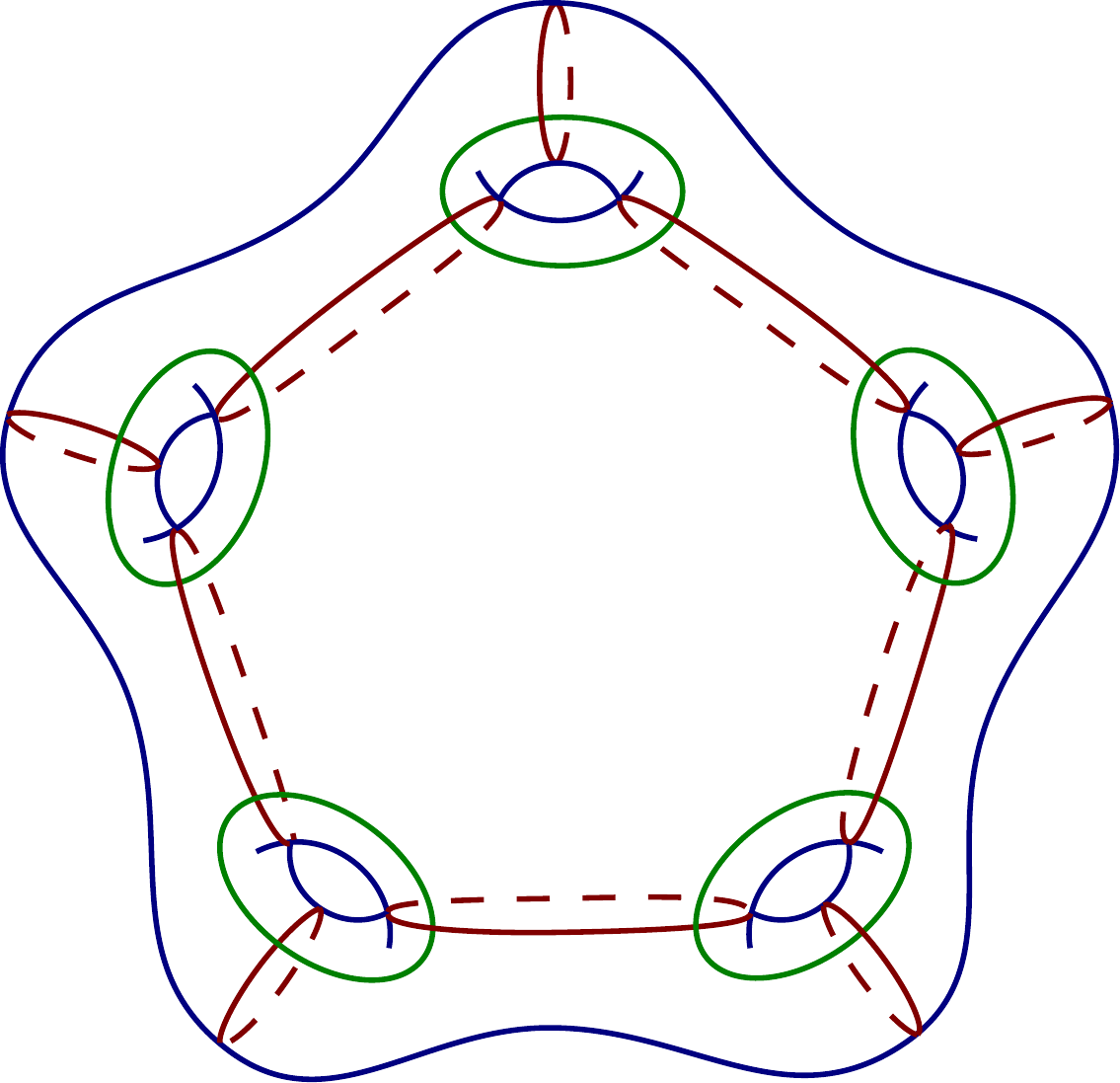}
\caption{The two multicurves used in the proof of Theorem~\ref{thm:evenodd}}
\label{fig:penner}
\end{figure}

\bigskip

The proof of Theorem~\ref{thm:evenodd} is inspired by the work of the third author with Lanier  \cite[Theorem 1.4]{lm}.  Specifically, they gave a recipe for constructing a pseudo-Anosov mapping class with the property that all of its odd powers are normal generators for $\Mod(S_g)$.  Our construction here is an explicit special case of their recipe, designed so that its even powers have the desired property.

\begin{proof}[Proof of Theorem~\ref{thm:evenodd}]

Fix $g \geq 3$.  Let $A$ and $B$ be the multicurves in $S_g$ indicated in Figure~\ref{fig:penner} (there is only one way to partition the set of curves in the figure into two multicurves).  By the Thurston construction the mapping class $f = T_AT_B^{-1}$ is pseudo-Anosov.

Let $D_{2g}$ denote the dihedral group of order $2g$.  There is a standard action of $D_{2g}$ on $S_g$ by orientation-preserving homeomorphisms, giving rise to a subgroup of $\Mod(S_g)$ isomorphic to $D_{2g}$.   We refer to this subgroup as simply $D_{2g}$.  Since this action preserves $A$ and $B$, it follows that $D_{2g}$ lies in the normalizer (indeed, centralizer) of $\langle f \rangle$.  In particular, $D_{2g}$ lies in $\EC(f)$.

Let $k$ be an element of $D_{2g}$ corresponding to a reflection of a $2g$-gon and let $h=kf$.  We will show that some power of $h$ satisfies the conclusion of the theorem.

First, we claim that all odd powers of $h$ have normal closure equal to $\Mod(S_g)$.  So let $n$ be odd.  We follow here the argument of Lanier and the third author  \cite[proof of Theorem 1.4]{lm}.  Since $k$ has order 2 and since $k$ commutes with $f$, we have that $h^n = kf^n$.  Let $r$ denote one of the generators for the cyclic subgroup of $D_{2g}$ of order $g$.  The commutator $[r,h^n] = (rh^nr^{-1})h^{-n}$ lies in the normal closure of $h^n$.  Since $D_{2g}$ lies in the centralizer of $f$ we have that 
\[
[r,h^n] = rh^nr^{-1}h^{-n} = rkf^nr^{-1}kf^{-n} = rkr^{-1}k=r^2,
\]
where the last equality uses the relation $kr^{-1}k=r$ in $D_{2g}$.  Lanier and the third author \cite[Theorem 1.1]{lm} showed that for $g \geq 3$ the normal closure of any nontrivial periodic element of $\Mod(S_g)$ besides a hyperelliptic involution is $\Mod(S_g)$.  Since $r^2$ is nontrivial and is not a hyperelliptic involution (consider, for instance, the action on $H_1(S_g;\Z)$), it follows that the normal closure of $r^2$, hence $h^n$, is $\Mod(S_g)$, as desired.

Next, we claim that all sufficiently large even powers of $h$ have normal closure isomorphic to $F_\infty$.  If $n$ is even then $h^n=(f^2)^n$; so a large even power of $h$ is a large power of $f$.  There is no element of $\Mod(S_g)$ interchanging $A$ and $B$, since these two sets contain different numbers of curves.  Thus by Lemma~\ref{lem:thurston}, the mapping class $f$ is NEC.  By our Theorem~\ref{thm:base}, all sufficiently large powers of $f$, hence all sufficiently large even powers of $h$, have normal closure isomorphic to $F_\infty$, as desired.

If $n$ is odd and sufficiently large, then it follows from the previous two claims that $h^n$ satisfies the conclusion of the theorem.
\end{proof}


\bibliography{Windmills}
\bibliographystyle{acm}

\end{document}